\renewcommand{\epsilon}{\varepsilon}
\newcommand{\dd}{\mathrm{d}}
\newcommand{\w}{{\tt w}}
\newcommand{\fB}{\mathfrak{B}}
\definecolor{GGray}{gray}{0.65}
\newcommand{\N}{{\mathbb N}}
\newcommand{\Z}{{\mathbb Z}}
\newcommand{\Q}{{\mathbb Q}}
\newcommand{\R}{{\mathbb R}}
\newcommand{\C}{{\mathbb C}}
\newcommand{\T}{{\mathbb T}}
\newcommand{\Vol}{\operatorname{Vol}}
\newcommand{\Covol}{\operatorname{Covol}}
\newcommand{\Card}{\operatorname{Card}}
\newcommand{\Cov}{\operatorname{Cov}}
\newcommand{\eff}{\operatorname{eff}}
\newcommand{\cH}{{\mathcal H}}
\newcommand{\cQ}{{\mathcal Q}}
\newcommand{\cQt}{{\tilde{\mathcal Q}}}
\newtheorem{lem}{Lemma}
\newtheorem{prop}{Proposition}
\newtheorem{theo}{Theorem}
\newtheorem{defi}{Definition}
\theoremstyle{remark}
\newtheorem{rem}{Remark}
\newtheorem{Example}{Example}
\newtheorem{conv}{Convention}
\def \equi#1{\mathrel{\mathop{\kern 0pt\sim}\limits_{#1}}} 
\begin{document}

\date{}

\title[Volumes of strata]
{Volumes of strata of moduli spaces of quadratic differentials: getting explicit values}

\author[E.~Goujard]{Elise Goujard}
\address[Elise Goujard]{Max Planck Institute for Mathematics, Bonn, Germany}
\email{elise.goujard@gmail.com}

\begin{abstract}
The volumes of strata of Abelian or quadratic differentials play an important
role in the study of dynamics on flat surfaces, related to dynamics in
polygonal billiards. This article applies all known approaches to compute volumes in the quadratic case and provides explicit values of volumes of the strata
of meromorphic quadratic differentials with at most simple poles
in all dimensions up to 11.
\end{abstract}

\maketitle

\section{Introduction}

\subsection{Flat surfaces, quadratic differentials, moduli spaces and volumes of strata}
A meromorphic quadratic differential $q$ with at most simple poles on a Riemann surface $S$ of genus $g$ defines a flat metric on $S$ with conical singularities. If $q$ is \emph{not} the global square of a holomorphic 1-form on $S$ (also called  Abelian differential), the metric has a non-trivial linear holonomy group, and in this case $(S,q)$ is called a {\it half-translation} surface. In this paper, if it is not precised, we consider only quadratic differentials satisfying the previous condition. If $\alpha=\{\alpha_1, \dots, \alpha_n\}\subset\{-1\}\cup\N$ is a partition of $4g-4$, $\cQ(\alpha)$ denotes the moduli space of pairs $(S,q)$ as above, where $q$ has exactly $n$ singularities of orders given by $\alpha$. It is a {\it stratum} in the moduli space $\cQ_g$ of pairs $(S,q)$ with no additional constraints on $q$. Similarly if 
$\beta=\{\beta_1, \dots, \beta_m\}\subset\N$ is a partition of $2g-2$, $\cH(\beta)$ denotes the moduli space of Abelian differentials with zeros of degree $\beta$.

In the following we will refer to a half-translation surface $(S,q)$ simply as $S$.

 Any flat surface $(S,q)$ in $\cQ(\alpha)$ admits a canonical ramified double cover $\hat{S}\overset{p}{\rightarrow} S$ such that the induced quadratic differential on $\hat{S}$ is a global square of an Abelian differential, that is $p^* q= \omega^2$ and $(\hat S, \omega)\in \cH(\beta)$. Let ${\Sigma=\{P_1, \dots P_n\}}$ denote the singular points of the quadratic differential on $S$, and $\hat\Sigma=\{\hat P_1, \dots \hat P_N\}$ the singular points of the Abelian differential $\omega$ on $\hat S$. Note that the pre-images of poles $P_i$ are regular points of $\omega$ so do not appear in the list $\hat\Sigma$. The subspace $H^1_-(\hat S, \hat\Sigma; \C)$ antiinvariant with respect to the action of the hyperelliptic involution provides local coordinates in the stratum $\cQ(\alpha)$ in the neighborhood of $S$.
 
 \begin{conv}\label{convarea} Following \cite{AEZ} we denote by $\cQ_1(\alpha)$ the hypersurface in $\cQ(\alpha)$ of flat surfaces of area $1/2$ such that the area of the double cover is $1$.\end{conv}

The stratum $ \cQ(\alpha) $ is a complex orbifold of dimension $2g+n-2$, and it is equipped with a natural $ PSL(2, \R) $-invariant measure $ \mu $, called \mbox{Masur--Veech} measure, induced by the Lebesgue measure in period coordinates. This measure defines a measure $ \mu_1 $ on $ \cQ_1(\alpha) $ in the following way:
if $ E $ is a subset of $ \cQ_1(\alpha) $, we denote by $ C(E) $ the cone underneath $ E $ in the stratum $ \cQ(\alpha) $: \[C(E)=\{S\in\cQ(\alpha)\mbox{ s.t. } \exists r\in (0, 1), S=rS_1 \mbox{ with } S_1\in E\} \]
and we define 
\[\mu_1(E)=2d\cdot\mu(C(E)), \]
with $ d=\dim_\C\cQ(\alpha) $, that is, the measure $ \dd\mu $ disintegrates in ${\dd\mu=r^{2d-1}\dd r\dd\mu_1}$.
With this convention the volume of a stratum $\cQ(\alpha)$ is then given by:
\[\Vol\cQ_1(\alpha)=2d\Vol C(\cQ_1(\alpha)).\]

There are several possible choices for the normalization of $\mu$, two of them being commonly used: namely the choice of Athreya--Eskin--Zorich, described in \cite{AEZ} and recalled in \S~\ref{desvol}, and the choice of Eskin--Okounkov, described in \cite{EO2} and recalled on \S~\ref{convEO}. 

\subsection{Historical remarks}
In the case of Abelian differentials, volumes of strata with respect to the Masur--Veech measure were computed by Eskin and Okounkov (\cite{EO}), and by Kontsevich and Zorich in some low genus cases (\cite{Z}). The first authors used representation theory and modular forms, and their approach allowed them to prove the rationality of volumes which was conjectured by Kontsevich and Zorich, that is \[\Vol \cH_1(\beta)=r\cdot \pi^{2g},\; r\in\Q,\] where $g$ is the genus of the surfaces in the stratum $\cH(\beta)$. They also computed algorithmically the exact values of the volumes of strata up to genus 10.
Zorich used a combinatorial approach to compute explicitly the volumes of some strata in low genus.

Similar approaches were developed in the quadratic case. Eskin and Okounkov applied in \cite{EO2} similar methods as in the Abelian case, but this case presents many extra difficulties. Nevertheless, the rationality of volumes is still valid, that is \begin{equation}\label{eq:rat}\Vol\cQ_1(\alpha)=r\cdot \pi^{2g_{\mathrm{eff}}}, \; r\in\Q,\end{equation}
where $g_{\eff}=\hat g-g$ and $ \hat g $ is the genus of the double cover $ \hat S $ for $S\in\cQ(\alpha)$ (cf Lemma \ref{lem:rat}).

In the case of genus 0 surfaces, Athreya--Eskin--Zorich developed two parallel approaches that leaded to the explicit computation of volumes. The first one (\cite{AEZ2}) is combinatorial and is based on a formula of Kontsevich (\cite{K}). The second one develops the study of Siegel--Veech constants: they give a formula relating Siegel--Veech constants and volumes (based on the classification of configurations in \cite{MZ}, \cite{B}), and since the Siegel--Veech constants in genus 0 are known thanks to the Eskin--Kontsevich--Zorich formula (\cite{EKZ}), they deduce the volumes of strata for genus 0.

Independently Mirzakhani proved in \cite{Mi} a formula relating the volumes of the principal strata $\cQ(1^{4g-4})$ with the intersection pairings of tautological classes on moduli spaces of Riemann surfaces.

However, up to the present paper, none of the algorithm was implemented to produce explicit values of the rational numbers $r$ in (\ref{eq:rat}) for $g>0$.

The strata of moduli spaces of quadratic differentials may be disconnected \cite{L2}. Approximate values of volumes of connected components of strata of small dimension are computed in \cite{DGZZ}: note that for now this experimental method is the only one that provides numerical values of volumes of $\cQ^{\textrm{reg}}(9,-1)$ and $\cQ^{\textrm{irr}}(9,-1)$ separately.

\subsection{Motivation}

Values of volumes of moduli spaces of quadratic differentials arise in several problems related to billiards in polygons and interval exchange transformations.

In fact volumes are directly related to Siegel--Veech constants that give the asymptotic of the number of closed geodesics in flat surfaces, and the asymptotic of the number of closed trajectories in the corresponding polygonal billiards. 
 Furthermore, the Siegel--Veech constants are related to the sum of the Lyapunov exponents of the Hodge bundle along the Teichm\"uller geodesic flow over the stratum by a formula of Eskin--Kontsevich--Zorich \cite{EKZ}. These Lyapunov exponents give precious quantitative information about the dynamics in corresponding billiards: Using these exponents Delecroix, Hubert and Leli\`evre computed the diffusion rate in the wind-tree billiard \cite{Delecroix:Hubert:Lelievre} (see also \cite{DZ} for series of families of wind-tree billiards): they show that this diffusion rate is exactly $2/3$, so the dynamics differs radically from the dynamics of the random walk in the plane (diffusion rate $1/2$). 

The explicit formulas relating volumes of strata and Siegel--Veech constants are given in \cite{EMZ} for the Abelian case, and  in \cite{G} for the quadratic case, using the work of Masur--Zorich \cite{MZ}.

 The aim of this paper is to provide explicit exact values of volumes of strata, in order to get new explicit values of Siegel--Veech constants using \cite{G}, and consequently new sums of Lyapunov exponents. In particular this procedure can be applied to genus one surfaces (where there is only one Lyapunov exponent) to give new results in the vein of \cite{DZ} and \cite{AEZ}.

Furthermore this paper is the occasion to clean up all normalizations once for all, and to explain clearly how to pass from one to another, in order to make the values of volumes directly usable in any normalization.

\subsection{Non-varying strata}\label{sect:nonvar}

Evaluating volumes of strata is related with counting problems on half-translation surfaces. This link can be useful to compute volumes explicitly in some special cases.

For the strata of quadratic differentials in genus 0 , Athreya--Eskin--Zorich gave an explicit formula relating Siegel--Veech constants and volumes of strata in \cite{AEZ}.  The Eskin--Kontsech--Zorich formula (Theorem 2 of \cite{EKZ}) gives here the values of the Siegel--Veech constants for the strata. So they deduced the values of volumes.

In higher genera, the relation between Siegel--Veech constants and volumes is given in \cite{G}. But values of Siegel--Veech constants are not known in general, only numerical approximations can be obtained by simulating Lyapunov exponents and using the \cite{EKZ}-formula.

However for some special strata, called ``non-varying'', Chen and M\"oller showed in \cite{CM} that the sum of Lyapunov exponents is the same for the entire stratum and for all Teichm\"uller curves inside the stratum. For those strata they computed the constant sum of Lyapunov exponents, so we obtain the Siegel--Veech constants by applying \cite{EKZ}-formula.

All these strata have their boundary strata that are also either non-varying, or hyperelliptic and connected, or of genus 0, so we can use the recursions given by the relations \[c_{area}(\cQ(\alpha))=\cfrac{\mbox{Explicit polynomials in volumes of boundary strata}}{\Vol(\cQ_1(\alpha))}\]
given in \cite{G} to compute the exact values of their volumes.

This method is applied in \cite{G} for a bunch of examples. The results are coherent with those of the other sections.

\subsection{Structure of the paper}

We first recall the \cite{AEZ}-convention for the normalization of the volumes. In section \ref{sect:volhyp} we compute volumes of hyperelliptic components of strata using the known values of volumes in genus 0. Then we illustrate the combinatorial approach in genus different of 0 in section \ref{ssection:voldim5}.  Finally we follow the Eskin--Okounkov approach to compute all volumes up to dimension 10 (around 300 strata).  

Most sections of this paper are written with respect to the \cite{AEZ}-convention, the last section uses the \cite{EO2}-convention and gives the normalization factor between the two conventions. In Appendix \ref{tab} we give all volumes written in the \cite{AEZ}-convention up to dimension 10.

\subsection{Acknowledgments}
I wish to thank my advisor Anton~Zorich, for his guidance and support during the preparation of this paper. I am grateful to Alex~Eskin for his help concerning the computations of the last part. I thank Anton~Zorich, Vincent~Delecroix and Peter~Zograf for many numerical computations \cite{DGZZ} that were used to check the consistency of the computations of this paper. I would like to thank Martin~M\"oller for his help with the computer experiments: in particular the program for computing volumes is based on his program (with D. Chen) for Abelian differentials. I wish to thank Corentin~Boissy for pointing me out some symmetry issues, Julien~Courtiel for helpful discussions about combinatorial maps, Pascal~Hubert, Samuel~Leli\`evre, Martin~M\"oller and Rodolfo R\'ios-Zertuche for useful discussions about volumes. I thank the anonymous referee for careful reading of the manuscript and useful comments.
 I thank ANR GeoDyM for financial support. The computations of the last part of this paper were performed at the Max Planck Institute of Mathematics in Bonn.

\section{Description of the Athreya--Eskin--Zorich's convention on volumes}\label{desvol}

Choosing a normalization for the volume element on a strata $\cQ(\alpha)$ is equivalent to choose a lattice in the space $H^1_-(\hat S, \hat\Sigma; \C)$ which gives the local model of the stratum $\cQ(\alpha)$ around $S$. The volume is then normalized by declaring that the covolume of the lattice is 1.

\begin{conv}\label{convreseau} Following the convention of \cite{AEZ} we choose, as lattice in $H^1_-(\hat S, \hat\Sigma; \C)$ of covolume $1$, the subset of those linear forms which take values in $\Z \oplus i\Z$ on $H^-_1(\hat S, \hat\Sigma;\Z)$, that we will denote by $(H^-_1(\hat S, \hat\Sigma;\Z))^{*}_{\C}$.
\end{conv} 
In other words the local image of this lattice under the period map is  the lattice $(\Z\oplus i\Z)^{\dim_\C}$ in $\C^{\dim_\C}$ where $\dim_\C$ is the complex dimension of $\cQ(\alpha)$.

This convention implies that the non zero cycles in $H_1(S, \Sigma,\Z)$ (that is, those represented by saddle connections joining two distinct singularities or closed loops non homologous to zero) have half-integer holonomy, and the other ones (closed loops homologous to zero) have integer holonomy.

 We denote by $\Vol^{numb}\cQ(\alpha)$ the volume of the stratum $\cQ(\alpha)$ when the zeros and poles are numbered and by $\Vol^{unnumb}\cQ(\alpha)$ the volume of the stratum when they are not. We have the following relation: 
\begin{align}\label{eq:label}\Vol^{numb}\cQ_1(\alpha_1^{m_1}, \alpha_2^{m_2}, \dots, \alpha_s^{m_s})=&\frac{m_1!m_2!\dots m_s!}{\vert\Gamma(\alpha)\vert} \;\cdot\notag\\
&\Vol^{unnumb}\cQ_1(\alpha_1^{m_1}, \alpha_2^{m_2}, \dots, \alpha_s^{m_s}) 
\end{align}
 where $\Gamma(\alpha)$ denotes the group of symmetries of all surfaces in the stratum $\cQ(\alpha)$. By symmetry of a surface in $\cQ(\alpha)$ we mean an automorphism of the flat surface. It preserves the set of singularities, permuting the labels of the singularities of same order. In general, surfaces in a stratum do not share any symmetry: some of them can be very symmetric (orbifold locus), but most of them are not. In some special cases (as $\cQ(-1^4)$ or hyperelliptic components), some symmetries are common to all surfaces, defining the group $\Gamma(\alpha)$. We abuse notation by denoting by $|\Gamma(\alpha)|$ the cardinal of the group of permutations of the labels of the singularities induced by $\Gamma(\alpha)$.  

\begin{conv}\label{conv:label}We choose to label all zeros and poles. In other terms, we compute the volumes $\Vol^{numb}\cQ_1(\alpha)$ that we will simply denote by $\Vol\cQ_1(\alpha)$ in the rest of the paper.
\end{conv}

 Let $ \gamma $ be a saddle connection on $ S $. We denote by $\gamma'$ and $\gamma''$ its two lifts on $\hat S$. If $[\gamma]= 0$ in $H_1(S, \Sigma; \C)$, then $[\gamma']+[\gamma'']= 0$ in $H_1(\hat S, \hat \Sigma; \C)$, and in this case we define $[\hat \gamma]   := [\gamma']$. In the other case we have $[\gamma']+[\gamma'']\neq 0$ and we define $[\hat \gamma]:=[\gamma']-[\gamma'']$. We obtain
  an element of $H_1^-(\hat S, \hat\Sigma; \C)$.

For a primitive cycle $[\gamma]$ in $H_1(S, \Sigma,\Z)$, that is, a saddle connection joining distinct zeros or a closed cycle (absolute cycle), the lift $[\hat\gamma]$ is a primitive element of $H_1^-(\hat S, \hat\Sigma,\Z)$.

We recall the construction given in \cite{AEZ} of a basis of $H_1^-(\hat S, \hat\Sigma,\Z)$ from a basis of $H_1(S, \Sigma,\Z)$.  
\subsection{Basis of $H_1^-(\hat S, \hat\Sigma,\Z)$}(cf \cite{AEZ} $\S 3.1$)
Let $ k $ be the number of poles in $ \Sigma $, $ a $ the number of even zeros and $ b $ the number of odd zeros (of order $ \geq 1 $). Assume that the zeros are numbered in the following way: $P_1, \dots P_a$ are the even zeros, $P_{a+1}, \dots, P_{a+b}$ are the odd zeros and $P_{a+b+1}, \dots, P_n$ the poles, and take a simple oriented broken line $P_1, \dots P_{n-1}$. Take each saddle connection $\gamma_i$ represented by $[P_i, P_{i+1}]$ for $i$ going from $1$ to $n-2$, and a basis $\{\gamma_{n-1}, \dots, \gamma_{n+2g-2}\}$ of $H_1(S, \Z)$.

Then we have (cf \cite{AEZ} $\S 3.1$):
 \begin{lem}[Athreya-Eskin-Zorich]\label{lem:basis}The family $\{\hat \gamma_1,\dots ,\hat \gamma_{n+2g-2}\}$ is a basis of $H_1^-(\hat S, \hat\Sigma,\Z)$.\end{lem}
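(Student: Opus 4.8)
The plan is to check that the $n+2g-2$ antiinvariant lifts $\hat\gamma_i$ form a $\Z$-basis of $H_1^-(\hat S,\hat\Sigma;\Z)$ in three steps: match the rank, establish independence over $\Q$, and then promote this to an integral (unimodularity) statement. For the rank I would use that $H^1_-(\hat S,\hat\Sigma;\C)$ is by construction the local model of the stratum, so that $H_1^-(\hat S,\hat\Sigma;\Z)$ has rank $\dim_\C\cQ(\alpha)=2g+n-2$, which is exactly the number of lifts produced from the $n-2$ saddle connections $\gamma_1,\dots,\gamma_{n-2}$ of the broken line and the $2g$ absolute cycles $\gamma_{n-1},\dots,\gamma_{n+2g-2}$. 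It therefore suffices to show that this family generates the lattice.

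For linear independence I would first record that $\{\gamma_1,\dots,\gamma_{n+2g-2}\}$ is independent already in $H_1(S,\Sigma;\Z)$: the $n-2$ broken-line edges are independent modulo $H_1(S;\Z)$ (they give $n-2$ of the $n-1$ generators of $H_1(S,\Sigma;\Z)/H_1(S;\Z)\cong\Z^{n-1}$, namely the differences of endpoints among $P_1,\dots,P_{n-1}$), while $\{\gamma_{n-1},\dots,\gamma_{n+2g-2}\}$ is a basis of $H_1(S;\Z)$ by construction. The antiinvariant lift $\gamma\mapsto\hat\gamma=[\gamma']-[\gamma'']$ is a homomorphism on their span, and I would deduce independence of the $\hat\gamma_i$ intrinsically by identifying $H_1^-(\hat S,\hat\Sigma;\Q)$ with the homology of $S$ twisted by the sign local system of the cover, under which $\hat\gamma_i$ corresponds to $\gamma_i$; heuristically this is also visible from the fact that $\int_{\hat\gamma_i}\omega=\pm 2\int_{\gamma_i}\sqrt q$, so that the periods of the $\hat\gamma_i$ are, up to a factor $2$, the holonomies of the $\gamma_i$, a system of period coordinates. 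Matching the rank, the $\hat\gamma_i$ then form a $\Q$-basis.

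The hard part is the integrality: showing that the $\hat\gamma_i$ generate $H_1^-(\hat S,\hat\Sigma;\Z)$ over $\Z$ and not merely a finite-index sublattice. Knowing, as recalled just before the statement, that each individual $\hat\gamma_i$ is primitive is not enough; I need the whole family to be unimodular. I would build an explicit $\tau$-equivariant cell structure on $\hat S$ — cutting $S$ along arcs pairing up the $b+k$ branch points (the odd zeros and the poles), lifting the two resulting copies and regluing them crosswise — to read off an integral basis of $H_1(\hat S,\hat\Sigma;\Z)$ adapted to the involution, and then check that the change-of-basis matrix to the antiinvariant lifts has determinant $\pm1$. The delicate bookkeeping is threefold: an unbranched even zero splits into two points of $\hat\Sigma$, an odd zero gives a single point of $\hat\Sigma$, and a pole gives a single preimage that is \emph{not} in $\hat\Sigma$, so one must verify that $[\gamma']-[\gamma'']$ is still a relative cycle precisely because the pole-preimages cancel in its boundary. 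This is exactly where the prescribed ordering of the singularities (even zeros, then odd zeros, then poles) and the choice to stop the broken line at $P_{n-1}$ enter: lifting a full basis of $H_1(S,\Sigma;\Z)$ would give $2g+n-1$ classes, one too many, and the single resulting relation — the antiinvariant lift of the boundary of a $2$-chain filling $S$ — is checked to involve the dropped edge $[P_{n-1},P_n]$ with unit coefficient (as one sees already on the pillowcase $\cQ(-1^4)$, where the top and bottom edges lift to the same cycle of the torus). Discarding that edge therefore leaves exactly a unimodular generating family, which is the crux of the argument.
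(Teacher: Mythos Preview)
The paper does not actually prove this lemma: it is quoted from \cite{AEZ} \S3.1 and attributed to Athreya--Eskin--Zorich, with the paper saying only ``Then we have (cf \cite{AEZ} \S3.1)'' and ``This lemma will be useful for the computations of the next two sections.'' There is therefore no proof here to compare your proposal against.

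On its own merits, your outline is reasonable and correctly identifies the structure of any such argument (rank count, $\Q$-independence, then unimodularity), and your remarks about the different behaviour at even zeros, odd zeros, and poles under the double cover are accurate. Two cautions. First, the assignment $\gamma\mapsto[\gamma']-[\gamma'']$ is only well-defined up to a global sign on each $\gamma$ (the labelling of the two lifts is not canonical), so calling it ``a homomorphism on their span'' requires a coherent choice of sheet along the broken line; this is harmless for independence but should be said. Second, and more substantively, the integrality step --- the only nontrivial part --- is described rather than executed: you say you would build a $\tau$-equivariant cell decomposition and check a change-of-basis determinant is $\pm1$, but that computation \emph{is} the lemma, and the pillowcase example you invoke is suggestive rather than a proof in general. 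As written, what you have is a correct plan, not yet a proof; to complete it you would need to actually exhibit the equivariant basis and carry out the determinant check, or else appeal directly to the argument in \cite{AEZ}.
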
 

This lemma will be useful for the computations of the next two sections.

\section{Using hyperellipticity}\label{sect:volhyp}

We begin with hyperelliptic components of strata: the values of their volumes are easier to compute since they are related to values of volumes in genus 0, that are computed in \cite{AEZ}.

\subsection{Volumes of hyperelliptic components of strata of quadratic differentials}\label{ssect:hyp}
The strata of the moduli spaces of quadratic differentials have one or two connected components: for genus $g\geq 5$ there are two components when the stratum contains a hyperelliptic component (cf \cite{L2}). For genus $g\leq 4$ some strata are hyperelliptic and connected (cf \cite{L1}): namely $\cQ(1^2, -1^2)$ and $\cQ(2, -1^2)$ in genus 1, $\cQ(1^4)$, $\cQ(2, 1^2)$, and $\cQ(2,2)$ in genus 2. For these strata and for hyperelliptic components of strata in higher genus the volume is easier to compute. 

\begin{prop} 
The volumes of hyperelliptic components of strata of quadratic differentials are given by the following formulas (in convention \cite{AEZ}):
\begin{itemize}
\item First type ($k_1\geq -1$ odd, $k_2\geq -1$ odd, $(k_1, k_2)\neq (-1, -1)$):

If $k_1\neq k_2$:
\begin{equation}\Vol\cQ_1^{hyp}(k_1^2, k_2^2)=\frac{2^{d}}{d!}\pi^{d}\frac{k_1!!}{(k_1+1)!!}\frac{k_2!!}{(k_2+1)!!}\label{eq:volQhyp1}\end{equation}
Otherwise:
\begin{equation}\Vol\cQ_1^{hyp}(k_1^4)=3\cdot\frac{ 2^{d}}{d!}\pi^{d}\left(\frac{k_1!!}{(k_1+1)!!}\right)^2\label{eq:volQhyp1part}\end{equation}

\item Second type ($k_1\geq -1$ odd, $k_2\geq 0$ even):
\begin{equation}\Vol\cQ_1^{hyp}(k_1^2, 2k_2+2)=\frac{2^{d}}{d!}\pi^{d-1}\frac{k_1!!}{(k_1+1)!!}\frac{k_2!!}{(k_2+1)!!}\label{eq:volQhyp2}\end{equation}

\item Third type ($k_1$, $k_2$ even):
\begin{equation}\Vol\cQ_1^{hyp}(2k_1+2, 2k_2+2)=\frac{2^{d+1}}{d!}\pi^{d-2}\frac{k_1!!}{(k_1+1)!!}\frac{k_2!!}{(k_2+1)!!}\label{eq:volQhyp3}\end{equation}
The same formula holds for $k_1=k_2$.
\end{itemize} In these formulas $d=k_1+k_2+4$ is the complex dimension of the strata.
\end{prop}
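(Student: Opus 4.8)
The plan is to reduce each hyperelliptic component to a stratum of quadratic differentials on $\mathbb{CP}^1$, whose volume is supplied by the genus~$0$ formula of Athreya--Eskin--Zorich \cite{AEZ}. A surface $S$ in a hyperelliptic component carries a hyperelliptic involution $\tau$ with quotient $S/\tau=\mathbb{CP}^1$; as $q$ is $\tau$-invariant it descends to a quadratic differential $q_0$ on $\mathbb{CP}^1$, and $S\to\mathbb{CP}^1$ is a double cover branched at $2g+2$ points. Under such a cover a singularity of order $k$ located at a branch point lifts to one singularity of order $2k+2$, while a singularity of order $k$ off the branch locus lifts to two singularities of order $k$. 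Reading this backwards matches the three types with the genus~$0$ strata $\cQ(k_1,k_2,-1^{2g+2})$, $\cQ(k_1,k_2,-1^{2g+1})$ and $\cQ(k_1,k_2,-1^{2g})$: in the first type $k_1,k_2$ are odd and lie off the branch locus (lifting to the pairs $k_1^2,k_2^2$), in the third both are even and sit at branch points (lifting to $2k_1+2,2k_2+2$), and the second is the mixed case; in each case the remaining branch points are simple poles, which lift to regular points. A Riemann--Hurwitz count shows the base has complex dimension $d=k_1+k_2+4$ in every case, and that the genus $g_{\eff}$ of the canonical double cover agrees on $S$ and on the base; by Lemma \ref{lem:rat} this already pins the exponents $\pi^{d},\pi^{d-1},\pi^{d-2}$ as $\pi^{2g_{\eff}}$.

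First I would promote this correspondence to an isomorphism of local period models. Pulling back cycles and saddle connections by the cover and composing with the lift $\gamma\mapsto\hat\gamma$ of \S\ref{desvol} (Lemma \ref{lem:basis}) yields a linear isomorphism between $H^1_-(\hat S,\hat\Sigma;\C)$ and the period space of the base stratum. This identifies the Masur--Veech measures up to a single constant $\kappa$, so that $\Vol\cQ_1^{hyp}(\ldots)=\kappa\cdot\Vol\cQ_1(k_1,k_2,-1^{p})$; substituting the values of \cite{AEZ} then produces formulas of exactly the stated shape, the factors $\tfrac{k_i!!}{(k_i+1)!!}$ being the per-singularity factors of the genus~$0$ formula and the simple poles contributing $1$ each.

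The substance of the proof is the evaluation of $\kappa$, which is a product of three effects. The area normalization of Convention \ref{convarea} is imposed on the \emph{orientation} double covers of $S$ and of the base; since these fit into a tower of covers over $\mathbb{CP}^1$, the two ``area~$1$'' hypersurfaces differ by a homothety, and integrating the homogeneous cone via $\dd\mu=r^{2d-1}\dd r\,\dd\mu_1$ turns this into a definite power of $2$. The covolume of the lattice $(H_1^-(\hat S,\hat\Sigma;\Z))^{*}_{\C}$ changes under the lift $[\hat\gamma]=[\gamma']\pm[\gamma'']$, and here the ramified first two types and the unramified third type behave differently: this is exactly what separates the coefficient $2^{d}$ of \eqref{eq:volQhyp1}--\eqref{eq:volQhyp2} from the $2^{d+1}$ of \eqref{eq:volQhyp3}. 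Finally the labeling convention \ref{conv:label} enters: forgetting the labels of the $p=d$ simple poles, which become unmarked points upstairs, yields the $1/d!$, while the two-to-one correspondence of off-branch zeros and the universal hyperelliptic automorphism absorb the remaining powers of $2$. In the coincident case $k_1=k_2$ of the first type, the four zeros $k_1^4$ share one order and $\tau$ may pair them into fibers in $\tfrac12\binom{4}{2}=3$ distinct ways, each a genuinely different labeled cover; this is the source of the factor $3$ in \eqref{eq:volQhyp1part}, whereas for distinct orders, and for the third type where the two marked points are distinguished by lying at branch points, no such ambiguity occurs.

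The main obstacle is precisely this determination of $\kappa$: although the geometric bijection between a hyperelliptic component and its genus~$0$ quotient is transparent, extracting the correct power of $2$ demands tracking the covolume of the period lattice through the composition of the orientation and hyperelliptic double covers and through the branch structure, and combining it consistently with the area renormalization and the labeling and automorphism counts. Keeping the ramified and unramified cases apart, so that the bookkeeping reproduces $2^{d}$ for the first two types and $2^{d+1}$ for the third together with the clean factor $3$ in the symmetric subcase, is where all the care is required.
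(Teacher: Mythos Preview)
Your overall strategy coincides with the paper's: pass to the genus~$0$ quotient via the hyperelliptic involution, compare the Masur--Veech measures through the period isomorphism built from Lemma~\ref{lem:basis}, and plug in the Athreya--Eskin--Zorich genus~$0$ formula. The paper carries this out explicitly and obtains the uniform relation $\dd\nu_{up}=4^{d-1}\pi^*(\dd\nu_{down})$ together with $\Vol^{unnumb}\cQ_1^{hyp}(\alpha)=I\cdot 2^{d-2}\Vol^{unnumb}\cQ_1(\beta)$, after which the labeling conversion~\eqref{eq:label} and the explicit genus~$0$ values finish the job.

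There is, however, a concrete misattribution in your sketch. You say that the lattice covolume ``behaves differently'' between the ramified and unramified types and that this is what separates the $2^d$ in \eqref{eq:volQhyp1}--\eqref{eq:volQhyp2} from the $2^{d+1}$ in \eqref{eq:volQhyp3}. In fact the paper's computation shows the lattice factor is the \emph{same} $4^{d-1}$ in all three cases: in the basis of Lemma~\ref{lem:basis}, exactly one lifted cycle (the one joining the two base zeros) keeps its length and the remaining $d-1$ double. The distinction between the types enters only afterwards, through (i) the symmetry group $|\Gamma^{hyp}(\alpha)|$, which equals $2$ for the first two types (the involution swaps the paired odd zeros, so it acts nontrivially on labels) but equals $1$ for the third (both zeros are fixed by $\tau$), and (ii) the parity-dependent constant $2$, $4$, or $8$ in the genus~$0$ formula for $\Vol\cQ_1(k_1,k_2,-1^p)$. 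These two effects together produce the $2^d$ versus $2^{d+1}$, not the period lattice. Your combinatorial explanation of the factor $3$ in \eqref{eq:volQhyp1part} via the $\tfrac12\binom{4}{2}$ fiber pairings is correct and amounts to the same count the paper obtains from $\tfrac{4!}{|\Gamma^{hyp}|\cdot 2!}$ versus $\tfrac{(2!)^2}{|\Gamma^{hyp}|}$.

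Since your proposal is explicitly a plan rather than a computation, the remaining work is exactly to track the constant honestly; once you correct the source of the type dependence as above, the bookkeeping goes through as in the paper.
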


\begin{Example}
For the five strata that are connected and hyperelliptic we obtain:
\begin{eqnarray}\Vol\cQ_1(1^2, -1^2)=\cfrac{\pi^4}{3}=30\zeta(4) & \Vol\cQ_1(1^4)=\cfrac{\pi^6}{15}=63\zeta(6) \label{eq:ex1}\\
\Vol\cQ_1(2, -1^2)=\cfrac{4\pi^2}{3}=8\zeta(2) & \Vol\cQ_1(2, 1^2)=\cfrac{2\pi^4}{15}=12\zeta(4) \label{eq:ex2}\\
& \Vol \cQ_1(2, 2)=\cfrac{4\pi^2}{3}=8\zeta(2) \label{eq:ex3}
\end{eqnarray}
For an alternative computation of the volume of $\cQ(2, 1^2)$ using graphs, see Appendix \ref{app:alt}.
\end{Example}

\begin{rem}
In Section \ref{sect:EO} and Appendix \ref{tab}, the surfaces will be counted modulo symmetries. In particular it changes the volume of the third type of hyperelliptic components by a factor $1/2$ (hyperelliptic involution). For the two first types, labelling the zeros kills this symmetry, so the two conventions for the evaluation of the volumes coincide. 
\end{rem}

\begin{proof}

We recall here the three types of strata that contain hyperelliptic components (cf \cite{L1}): \begin{itemize}
\item First type: 
\[\xymatrix{\cQ^{hyp}(k_1^2 , k_2^2)
\ar[r]^\pi &
\cQ(k_1, k_2, -1^{2g+2})}\]

for $k_1\geq -1$ odd, $k_2\geq -1$ odd, $(k_1, k_2)\neq (-1, -1)$, ${g=\frac{1}{2}(k_1+k_2)+1}$ ($g$ is the genus of the surfaces in $\cQ(k_1, k_2, -1^{2g+2})$) . The map $\pi$ is a ramified double covering having ramifications points over $2g+2$ poles. Note that for $k_i=-1$ there are $2g+3$ poles and  ${2g+3}\choose{1}$ choices for the branch points in the base, so $2g+3$ choices for $\pi$. 
\item Second type: 
\[\xymatrix{\cQ^{hyp}(k_1^2 , 2k_2+2)
\ar[r]^\pi &
\cQ(k_1, k_2, -1^{2g+1})}\]

for $k_1\geq -1$ odd, $k_2\geq 0$ even, $g=\frac{1}{2}(k_1+k_2+3)$. The ramification points are $2g+1$ poles and the zero of order $k_2$. Note that for $k_1=-1$ there are $2g+2$ poles and  ${2g+2}\choose{1}$ choices for the cover $\pi$.
\item Third type:
\[\xymatrix{\cQ^{hyp}(2k_1+2 , 2k_2+2)
\ar[r]^\pi &
\cQ(k_1, k_2, -1^{2g})}\]

for $k_1$, $k_2$ even, $g=\frac{1}{2}(k_1+k_2)+2$. The ramification points are over all the singularities. \end{itemize}

We introduce the following notation common to the three types of hyperelliptic components: \[\cQ^{hyp}(\alpha)\underset{I:1}{\overset{\pi}{\longrightarrow}}\cQ(\beta)\] with $\alpha=(\alpha_1^{m_1}, \dots, \alpha_s^{m_s})$ and $\beta=( \beta_1^{n_1}, \dots,  \beta_r^{n_r})$, where $I$ is the number of choices for the cover: $I=1$ except for the special cases ($k_1=-1$) mentioned above.

Let $d=\dim_\C\cQ(\beta)$ be the complex dimension of the stratum that we consider.

Recall that, by definition, the volume of the hyperboloid of surfaces of area equal to $1/2$ is given by the volume of the cone underneath times the real dimension of the stratum:
\[\Vol\cQ_1(\beta)=2d\cdot\Vol\{S\in\cQ(\beta), \; \mathrm{area}(S)\leq 1/2\}\]

Let $S$ be a point in $\cQ_1(\beta)$, and let $S'$ be one of the $I$ possible lifts $\pi^*(S)$. As $S$ is of area $1/2$, $S'$ is of area $1$ so belongs to $\cQ_2^{hyp}(\alpha)$. So the cone underneath $\cQ_1(\beta)$ is in $1:I$ correspondence with the cone underneath $\cQ_2^{hyp}(\alpha)$. Now we want to compare the volume elements of $\cQ^{hyp}(\alpha)$ and $\cQ(\beta)$. So we have to understand how the lattice $(H^-_1(\hat S, \hat \Sigma;\Z))^{*}_{\C}$ is lifted by $\pi^*$ and compare it with the lattice $(H^-_1(\hat S', \hat \Sigma';\Z))^{*}_{\C}$, where $\hat S$ and $\hat S'$ are the orientation double covers of $S$ and $S'$ respectively. 

For the first type we have the following commutative diagram:
\[\xymatrix{\cH(k_1+1, k_2+1) 
\ar[d]  &  \ar@{.>}[l] \cH((k_1+1)^2 , (k_2+1)^2) \ar[d]\\
\cQ(k_1, k_2, -1^{2g+2})
 & \ar[l]_\pi^{I:1}
\cQ^{hyp}(k_1^2 , k_2^2)}\]

On $S\in \cQ(k_1, k_2, -1^{2g+2})$ we consider the saddle connections defined by taking a broken line joining all the singularities except one pole, as in the picture below, such that $a$ joins the two zeros, $b$ joins a zero to a pole, and $a_i,b_i$ join the remaining poles except the last one,  for $i$ going from $1$ to $g$. Then $\hat a, \hat b, \hat a_1, \dots, \hat b_g$ is a primitive basis of $H^-_1(\hat S, \hat \Sigma;\Z)$ (cf Lemma \ref{lem:basis}). On the other hand consider the saddle connections on $\cQ^{hyp}(k_1^2 , k_2^2)$ constructed using $a, b, a_1, \dots b_g$ in the following way: for all $a_i$ and $b_i$ and for $b$, take the combination of the two lifts by $\pi$ to obtain primitive cycles $A_i$, $B_i$, and $B$ in $H_1(S', \Sigma', \Z)$. Take only one of the two preimages of $a$ to get a primitive cycle $A$. Then $\hat A, \hat B, \hat A_1, \dots, \hat B_g$ define a primitive basis of $H^-_1(\hat S', \hat \Sigma';\Z)$ (same arguments as in Lemma \ref{lem:basis}).

\begin{figure}[h]
\includegraphics{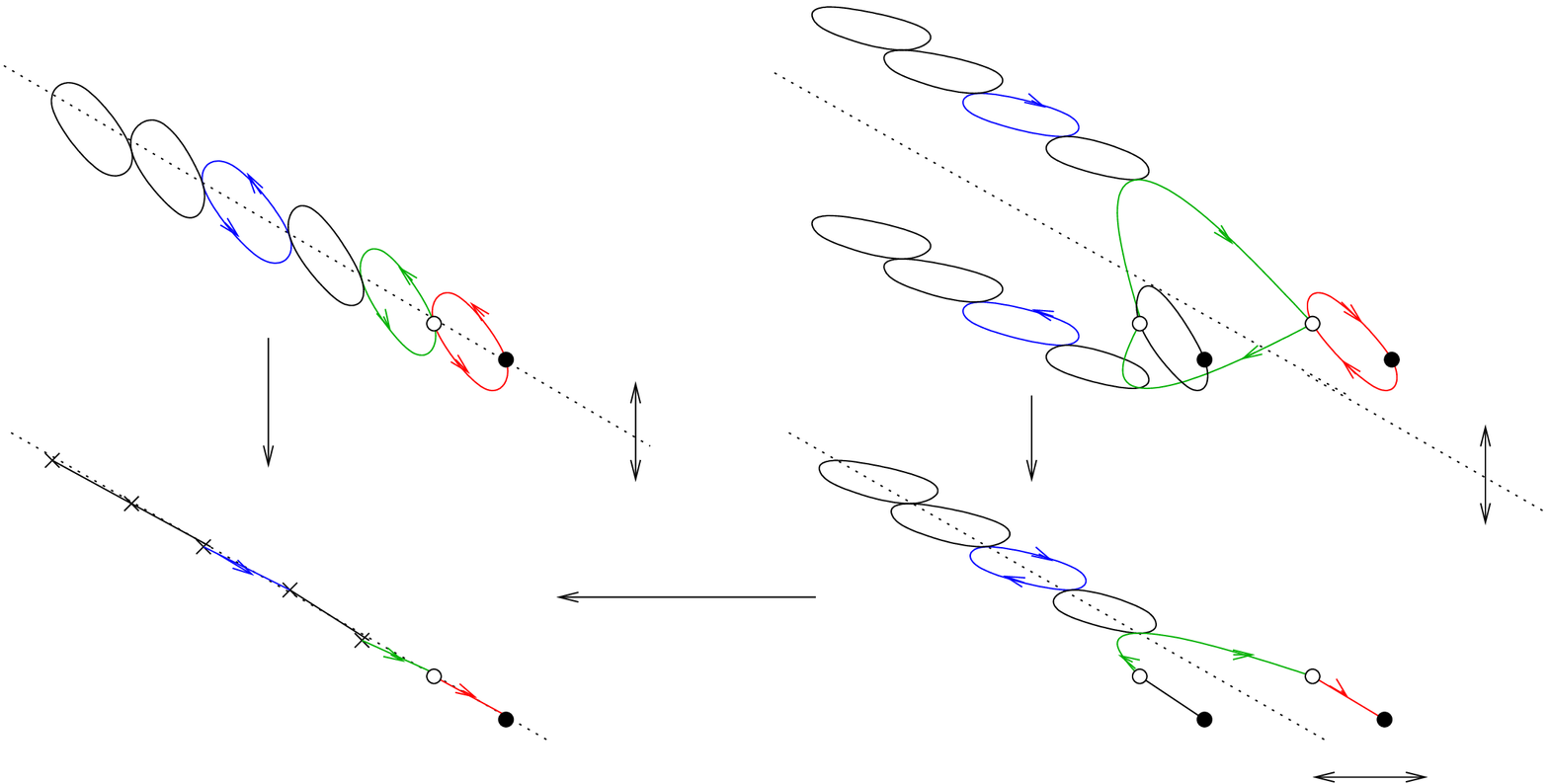}
\begin{picture}(200,130)
\put(0,0)
{\begin{picture}(0,0)
\put(-50,20){$\cQ(\beta)$}
\put(-50, 100){$\cH(\beta')$}
\put(200,20){$\cQ^{hyp}(\alpha)$}
\put(180, 100){$\cH(\alpha')$}
\put(70, 30){$\pi$}
\put(65, 60){$\sigma_d$}
\put(200, 55){$\sigma_u$}
\put(177, 0){$s$}
\put(-10, 36){\textcolor{blue}{$a_i$}}
\put(15, 22){\textcolor{green}{$b$}}
\put(30, 15){\textcolor{red}{$a$}}
\put(0, 110){\textcolor{blue}{$\hat a_i$}}
\put(20, 97){\textcolor{green}{$\hat b$}}
\put(35, 90){\textcolor{red}{$\hat a$}}
\put(135, 41){\textcolor{blue}{$A_i$}}
\put(160, 30){\textcolor{green}{$B$}}
\put(180, 23){\textcolor{red}{$A$}}
\put(130, 115){\textcolor{blue}{$\hat A_i$}}
\put(160, 100){\textcolor{green}{$\hat B$}}
\put(180, 83){\textcolor{red}{$\hat A$}}
\end{picture}}\end{picture}
\end{figure}

On the picture $\sigma_u$ and $\sigma_d$ are the involutions of the double covers and $s$ is the hyperelliptic involution. 

Since $\hat a$ is twice longer than $a$, the corresponding complex coordinate satisfies \[\dd\hat a=4\dd a.\]
So in local coordinates volume elements are given by:
\[\dd\nu_{down}=\dd\hat a\,\dd\hat b\,\dd\hat a_1\dots \dd\hat b_g=4^{d}\dd a\,\dd b\,\dd a_1\dots \dd b_g\]
and 
\[\dd\nu_{up}=\dd\hat A\,\dd\hat B\,\dd\hat A_1\dots \dd\hat B_g=4^{d}\dd A\,\dd B\,\dd A_1\dots \dd B_g\]
with $\dd A=\pi^*(\dd a)$, $\dd B=4\pi^*(\dd b)$, $\dd A_i=4\pi^*(\dd a_i)$ and $\dd B_i=4\pi^*(\dd b_i)$.

So we obtain the following relation between the volume elements:
\begin{equation}\label{eq:volel}\dd \nu_{up}=4^{d-1}\pi^*(\dd \nu_{down})
\end{equation}

The computation of $\dd\nu_{up}$ for the other types of connected components is completely similar to this case, and we get that Equation \eqref{eq:volel} holds in all cases.

So now we have all the elements to compute the relation between $\Vol\cQ_1(\beta)$ and $\Vol\cQ_1^{hyp}(\alpha)$:

\begin{eqnarray*}\Vol^{unnumb}\cQ_1^{hyp}(\alpha) & = &  2d\Vol^{unnumb}\{S'\in \cQ^{hyp}(\alpha), \; \mathrm{area}(S')\leq 1/2\}\\
 & = &  2d \cdot\frac{1}{2^d}\Vol^{unnumb}\{S\in \cQ^{hyp}(\alpha), \; \mathrm{area}(S)\leq 1\}\\
  & = &   \frac{2d}{2^d}\cdot I\cdot 4^{d-1}\Vol^{unnumb}\{S\in\cQ(\beta),\; \mathrm{area}(S)\leq 1/2\}\\
  & = & I\cdot 2^{d-2}\Vol^{unnumb}\cQ_1(\beta)\end{eqnarray*}

  Using Convention \ref{conv:label} and \eqref{eq:label} we get:
    \[\Vol\cQ_1^{hyp}(\alpha)= \cfrac{m_1!\dots m_s!}{|\Gamma^{hyp}(\alpha)|} \cdot I\cdot 2^{d-2}\cdot  \frac{\vert\Gamma(\beta)\vert}{n_1!\dots n_r!}\Vol\cQ_1(\beta)\]

Note that, for the first two types, the hyperelliptic involution (which is the only common symmetry to all surfaces in the component) exchanges the zeros which are preimages of the same zero downstairs. So for these types $\vert\Gamma^{hyp}(\alpha)\vert=2$. For the third type $\vert\Gamma^{hyp}(\alpha)\vert=1$, since the action of the hyperelleptic involution on the zeros is trivial.

Downstairs there is no symmetry for each stratum that we consider so $\vert\Gamma(\beta)\vert=1$ for each $\beta$.

For the special cases where $k_i=-1$, e.g. $\cQ^{hyp}(k_1^2, -1^2)\to \cQ(k_1, -1^{2g+3})$, the factor $I$ is exactly the multiplicity of the poles in the base ($2g+3$ in the example), so this factor is compensated by the factor $n_r!$ corresponding to the multiplicity of the poles in the numerator.

The values of the volumes of strata of quadratic differentials in genus 0 are given in \cite{AEZ}, Theorem 1.1:
\begin{equation}\Vol\cQ_1(\beta_1, \dots, \beta_n)=2\pi^2\prod_{i=1}^n v(\beta_i),\label{eq:volQgenus0}\end{equation}
with \[v(n)=\cfrac{n!!}{(n+1)!!}\cdot \pi^n\cdot \begin{cases} \pi & \mbox{when }n\mbox{ is odd}\\ 2& \mbox{when }n\mbox{ is even}\end{cases}\]
for $n\in\{-1, 0\}\cup\N$ and with $$n!!=n(n-2)(n-4)\cdots,$$ by convention $(-1)!!=0!!=1$.

In particular we have:
\begin{itemize}
\item for the first type ($k_1\geq -1$ odd, $k_2\geq -1$ odd, $(k_1, k_2)\neq (-1, -1)$, $d=2g+2$): \[\Vol\cQ_1(k_1, k_2, -1^{d})= 2\pi^{d}\frac{k_1!!}{(k_1+1)!!}\cdot\frac{k_2!!}{(k_2+1)!!},\] 
\item for the second type ($k_1\geq -1$ odd, $k_2\geq 0$ even, $d=2g+1$):
\[\Vol\cQ_1(k_1, k_2, -1^{d}) = 4\pi^{d-1}\frac{k_1!!}{(k_1+1)!!}\cdot\frac{k_2!!}{(k_2+1)!!},\] 
\item for the third type ($k_1$, $k_2$ even, $d=2g$):
\[\Vol\cQ_1(k_1, k_2, -1^{d}) = 8\pi^{d-2}\frac{k_1!!}{(k_1+1)!!}\cdot\frac{k_2!!}{(k_2+1)!!}.\] 
\end{itemize}
So we obtain the result.
\end{proof}

\subsection{Volumes of hyperelliptic components of strata of Abelian differentials}

Similarly we compute the volumes of the hyperelliptic components of Abelian differentials (for the needs of \cite{G}). To match with Convention \ref{convarea} we will consider $\cH_{1/2}$ the hypersurface of surfaces with area $1/2$. We follow also Convention \ref{conv:label} for these connected components and use the equality $\Vol\cH_{1/2}^{hyp}(\alpha)=\Vol^{numb}\cH_{1/2}^{hyp}(\alpha)$.

\begin{prop}The volumes of hyperelliptic components of strata of Abelian differentials with area $1/2$ are given by the following formulas:
\begin{eqnarray}\Vol\cH^{hyp}_{1/2}(k-1)=\cfrac{2^{k+2}}{(k+2)!}\cdot\cfrac{(k-2)!!}{(k-1)!!}\cdot\pi^{k+1}\label{eq:volHhyp1}\\
\Vol\cH^{hyp}_{1/2}\left(\left(\frac{k}{2}-1\right)^2\right)=\cfrac{2^{k+3}}{(k+2)!}\cdot\cfrac{(k-2)!!}{(k-1)!!}\cdot\pi^{k}\label{eq:volHhyp2}\end{eqnarray}
\end{prop}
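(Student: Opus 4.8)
The plan is to mirror the proof of the preceding proposition, realizing each hyperelliptic component of an Abelian stratum as the orientation double cover of a genus~$0$ stratum of quadratic differentials, whose volume is already given by \eqref{eq:volQgenus0}. For $(\hat S,\omega)\in\cH^{hyp}(k-1)$ the hyperelliptic involution $\tau$ satisfies $\tau^*\omega=-\omega$, so $(\hat S,\omega)$ is the orientation double cover of the half-translation surface $(S,q)=(\hat S/\tau,q)$ with $p^*q=\omega^2$ on the sphere $S=\mathbb{CP}^1$. First I would identify the base strata: in the first family the single zero of order $k-1=2g-2$ lies at a fixed point of $\tau$ (so $k$ is odd, $g=(k+1)/2$), descends to a zero of order $k-2$, and the remaining $k+2$ Weierstrass points descend to simple poles, giving
\[\cH^{hyp}(k-1)\ \longrightarrow\ \cQ(k-2,-1^{k+2}),\qquad k\text{ odd};\]
in the second family the two zeros of order $k/2-1=g-1$ are exchanged by $\tau$ (so $k$ is even, $g=k/2$), lie over a single non-branch point, and descend to one zero of order $k-2$, while all $k+2$ Weierstrass points descend to simple poles, giving
\[\cH^{hyp}\!\left(\left(\tfrac{k}{2}-1\right)^2\right)\ \longrightarrow\ \cQ(k-2,-1^{k+2}),\qquad k\text{ even}.\]
In both cases $\dim_\C=k+1=:d$, and since $\cH^{hyp}$ is a full connected component on which $\omega$ is anti-invariant, $H^1_-(\hat S,\hat\Sigma;\C)$ provides its local coordinates, exactly the space carrying the quadratic lattice of $\cQ(k-2,-1^{k+2})$.

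The technical heart, as in the previous proof, is to compare the two volume elements on $H^1_-(\hat S,\hat\Sigma;\C)$: the one induced by the Masur--Veech lattice of the Abelian stratum and the one induced by the lattice $(H_1^-(\hat S,\hat\Sigma;\Z))^*_\C$ of the quadratic stratum downstairs. I would fix a basis of $H_1^-(\hat S,\hat\Sigma;\Z)$ by lifting a broken line through the singularities of $S$ (Lemma~\ref{lem:basis}), and then track how each lifted cycle $\hat\gamma=\gamma'-\gamma''$ scales the period of $\omega$ relative to the holonomy of $\gamma$ on $S$, precisely the mechanism producing the factor $\dd\hat a=4\dd a$ underlying \eqref{eq:volel}. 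Carried out over all $d$ coordinates, this yields the power-of-two normalization factor relating the two conventions; the two families are treated in parallel, the eventual difference between their formulas coming only from whether the descended zero has odd or even order.

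It then remains to assemble the global factors. The orientation double cover is canonical, so there is no cover multiplicity $I$ to choose. The cone/area bookkeeping relates $\cH_{1/2}$ (area of $\hat S$ equal to $1/2$) to $\cQ_1$ (area of $S$ equal to $1/2$, i.e. area of $\hat S$ equal to $1$, cf.\ Convention~\ref{convarea}) through the homogeneity $\Vol\{\mathrm{area}\le T\}\propto T^{d}$, contributing $2^{-d}$; unlabelling the $k+2$ simple poles, which become indistinguishable regular points upstairs, contributes $1/(k+2)!$; and the hyperelliptic involution is accounted for through Convention~\ref{conv:label} and \eqref{eq:label}, acting trivially on the single label in the first family and swapping the two equal labels in the second. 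Combining the lattice factor of the previous step with $2^{-d}$ and $1/(k+2)!$ gives in both cases the net identity $\Vol\cH^{hyp}_{1/2}=\dfrac{2^{d}}{(k+2)!}\,\Vol\cQ_1(k-2,-1^{k+2})$.

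Finally I would substitute $\Vol\cQ_1(k-2,-1^{k+2})=2\pi^2\,v(k-2)$ (using $v(-1)=1$), which equals $2\pi^{k+1}\frac{(k-2)!!}{(k-1)!!}$ when $k$ is odd and $4\pi^{k}\frac{(k-2)!!}{(k-1)!!}$ when $k$ is even; these reproduce \eqref{eq:volHhyp1} and \eqref{eq:volHhyp2} respectively. The main obstacle is the middle step: pinning down the exact power of $2$ in the comparison of the Abelian and quadratic normalizations through the double cover. This demands the same careful cycle-by-cycle analysis as \eqref{eq:volel}, now with particular attention to the fact that in the second family $\tau$ permutes the two zeros, so that $H_1^-$ and $H_1$ need not coincide integrally and one must verify this integral subtlety is exactly compensated by the symmetry bookkeeping of \eqref{eq:label} (and, separately, that the degenerate low-genus cases such as $k=1$, where the base acquires extra symmetries, are handled consistently). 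Getting every factor of $2$ to agree is where essentially all the content lies.
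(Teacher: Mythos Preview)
Your overall strategy---realize $\cH^{hyp}$ as the orientation double cover of a genus-$0$ quadratic stratum and plug in \eqref{eq:volQgenus0}---is exactly the paper's, and your identification of the base strata $\cQ(k-2,-1^{k+2})$, the dimension $d=k+1$, and the final substitution are all correct.

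Where you diverge is the middle step, which you flag as ``the main obstacle.'' In the paper this step is essentially empty. The map $\pi:\cH^{hyp}\to\cQ(k-2,-1^{k+2})$ is not merely a double cover to be unwound cycle by cycle: it is an \emph{isomorphism}, and by the very definition of Convention~\ref{convreseau} the volume element on $\cQ(\beta)$ lives on $H^1_-(\hat S,\hat\Sigma;\C)$, where $\hat S$ is the orientation double cover of $S\in\cQ(\beta)$. Here that double cover is $\hat S\in\cH^{hyp}$ itself, and since the base has genus~$0$ the hyperelliptic involution acts by $-1$ on all of $H_1(\hat S,\hat\Sigma;\Z)$, so $H^1_-(\hat S,\hat\Sigma;\C)=H^1(\hat S,\hat\Sigma;\C)$ with the \emph{same} integral lattice. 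The Abelian and quadratic volume elements therefore coincide on the nose: there is no analogue of the $4^{d-1}$ factor from \eqref{eq:volel}, and no cycle-by-cycle comparison is required. The paper simply records $\Vol^{unnumb}\cH_1^{hyp}=\Vol^{unnumb}\cQ_1$ and then does only the labelling bookkeeping and the area rescaling from $\cH_1$ to $\cH_{1/2}$.

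This also explains why your factor accounting is hazy: you combine an unspecified ``lattice factor'' with a $2^{-d}$ area factor to produce $2^d$, but if the lattice factor is $1$ (as it is) these do not combine to $2^d$. Once you recognize that $\pi$ is a volume-preserving isomorphism, the only powers of $2$ that enter are from the rescaling between $\cH_1$ and $\cH_{1/2}$, and the computation becomes a two-line affair.
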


\begin{rem}
Here again, if we choose to follow the Eskin-Okounkov convention and count surfaces modulo symmetries, the volume of the component ${\cH^{hyp}_{1/2}(k-1)}$ will be twice smaller.
\end{rem}

\begin{proof}
We recall here the two types of strata of Abelian differentials that contain hyperelliptic components (cf \cite{KZ}):
\begin{itemize}
\item First type ($ g\geq 2$): \[\xymatrix{\cH^{hyp}(2g-2)
\ar[r]^\pi &
\cQ(2g-3, -1^{2g+1})}\]

\item Second type ($g\geq 2$): \[\xymatrix{\cH^{hyp}((g-1)^2)
\ar[r]^\pi &
\cQ(2g-2, -1^{2g+2})}\]

 \end{itemize}
In both cases, $\pi$ is an isomorphism. By conventions \ref{convreseau} and \ref{convarea}, the volume elements are chosen to be invariant under this isomorphism, so we have:
\begin{eqnarray*}\Vol^{unnumb}\cH_1^{hyp}(2g-2)=\Vol^{unnumb}\cQ_1(2g-3, -1^{2g+1})\\
\Vol^{unnumb}\cH_1^{hyp}((g-1)^2)=\Vol^{unnumb}\cQ_1(2g-2, -1^{2g+2})\end{eqnarray*}

So considering the naming of the singularities we obtain:
\begin{eqnarray*}\Vol\cH^{hyp}_1(2g-2)& = &\cfrac{1}{(2g+1)!}\Vol\cQ_1(2g-3, -1^{2g+1})\\
& =& \cfrac{2}{(2g+1)!}\cdot\cfrac{(2g-3)!!}{(2g-2)!!}\cdot\pi^{2g}\\
\Vol\cH^{hyp}_1((g-1)^2)& =&\cfrac{2!}{2}\Vol^{unnumb}\cH^{hyp}_1((g-1)^2)\\
 & =&\cfrac{1}{(2g+2)!}\Vol\cQ_1(2g-2, -1^{2g+2})\\
  & =&\cfrac{4}{(2g+2)!}\cdot\cfrac{(2g-2)!!}{(2g-1)!!}\cdot\pi^{2g}\end{eqnarray*}

By plugging values of volumes given in (\ref{eq:volQgenus0}). 
For the first type, for $k=2g-1$ we have $\dim_\C\cH(k-1)=2g=k+1$. For the second type, for $k=2g$ we have $\dim_\C\cH\left(\left(\frac{k}{2}-1\right)^2\right) = 2g+1=k+1$.
Finally, note that \[\Vol\cH_{1/2}(\beta)=2^{\dim_\C\cH(\beta)}\Vol\cH_1(\beta).\]
\end{proof}


\section{Counting diagrams }\label{ssection:voldim5}
For strata of complex dimension $d\leq 5$, we follow the combinatorial approach introduced by Zorich (\cite{Z}) in the Abelian case, Athreya Eskin and Zorich (\cite{AEZ2}) in the quadratic case for genus 0.

The general idea is to count ``integer points'' in a large ball in the stratum, that is, surfaces corresponding to points of the normalization lattice in the stratum.

The relation between volume and number of lattice points is given in \S~2.3 of \cite{AEZ2}:
\begin{prop}[Athreya-Eskin-Zorich]
\begin{eqnarray}\Vol\cQ_1(\alpha) & = & 2d\cdot \lim\limits_{N\to\infty} N^{-d}\cdot\notag\\& &(\mbox{Number of lattice points of area at most }N/2\mbox{ in }\cQ(\alpha))\label{eq:volint}\end{eqnarray}
\end{prop}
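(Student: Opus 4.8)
The plan is to combine the homogeneity of the area function with the elementary principle that, for a unimodular lattice in Euclidean space, the number of lattice points inside a large dilate of a fixed region is asymptotic to the volume of the region times the dilation factor raised to the real dimension. First I would fix period coordinates near a surface $S$, identifying a neighborhood in $\cQ(\alpha)$ with an open set of $H^1_-(\hat S,\hat\Sigma;\C)\cong\C^d$, where $d=\dim_\C\cQ(\alpha)$. By Convention \ref{convreseau} the normalization lattice $(H^-_1(\hat S,\hat\Sigma;\Z))^{*}_{\C}$ is carried to $(\Z\oplus i\Z)^d=\Z^{2d}\subset\R^{2d}$, a unimodular lattice $\Lambda$, and $\mu$ is the corresponding Lebesgue measure. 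The lattice points of the stratum are exactly those surfaces whose period coordinates lie in $\Lambda$.

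Next I would record the scaling behavior. Multiplication of a half-translation surface by $r>0$ acts on period coordinates by the scalar $r$, while the area is a quadratic form in these coordinates, hence homogeneous of degree $2$: $\mathrm{area}(rS)=r^2\,\mathrm{area}(S)$. Writing $B(T)=\{S\in\cQ(\alpha):\mathrm{area}(S)\leq T/2\}$ for the cone of surfaces of area at most $T/2$, this homogeneity gives $B(N)=\sqrt N\cdot B(1)$, the Euclidean dilation by $\sqrt N$ in period coordinates, and $B(1)=C(\cQ_1(\alpha))$ by Convention \ref{convarea}.

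Then I would invoke the lattice-point asymptotic: for a unimodular lattice $\Lambda$ in $\R^{2d}$ and a finite-volume region $\Omega$ with sufficiently tame boundary,
\[\#(\Lambda\cap T\Omega)\sim T^{2d}\,\Vol(\Omega),\qquad T\to\infty.\]
Applying this with $\Omega=B(1)$ and $T=\sqrt N$, so that $T^{2d}=N^d$, yields
\[\lim_{N\to\infty}N^{-d}\,\#\big(\Lambda\cap B(N)\big)=\Vol C(\cQ_1(\alpha)).\]
Multiplying by $2d$ and using the defining relation $\Vol\cQ_1(\alpha)=2d\,\Vol C(\cQ_1(\alpha))$ then produces exactly \eqref{eq:volint}.

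The main obstacle is the justification of this lattice-point asymptotic, because $\cQ_1(\alpha)$, and hence the region $B(1)$, is noncompact: the stratum has cusps corresponding to surfaces with short saddle connections or otherwise degenerating geometry, so the naive counting principle can fail when $\Omega$ has infinite volume or a boundary that accumulates lattice points. Two things must therefore be secured: first, that $\Vol\cQ_1(\alpha)<\infty$, which is the Masur--Veech finiteness theorem; and second, that the contribution of the thin part of the cone to the count is $o(N^d)$, so the error in the asymptotic is genuinely of lower order. Controlling this second point near the cusps, namely showing that the lattice points concentrated along short saddle connections do not inflate the leading term, is the delicate step, and it is handled by bounding the number of lattice points in the thin part using the decay of volume toward the cusps.
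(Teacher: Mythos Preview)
The paper does not supply its own proof of this proposition: it is quoted verbatim from \S2.3 of \cite{AEZ2} and stated without argument. There is therefore nothing in the present paper to compare your attempt against.

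Your sketch is the standard one and is essentially what underlies the cited result. One point is worth tightening. You begin by ``fixing period coordinates near a surface $S$'' and then proceed as if the entire region $B(1)=C(\cQ_1(\alpha))$ sits inside a single Euclidean chart with a single unimodular lattice $\Lambda\subset\R^{2d}$. It does not: period coordinates are only local charts on an orbifold, so the Euclidean statement $\#(\Lambda\cap T\Omega)\sim T^{2d}\Vol(\Omega)$ cannot be invoked in one stroke. The clean formulation is global: the action $S\mapsto rS$ scales the Masur--Veech measure by $r^{2d}$ (this is exactly what the local linear model delivers on every chart), whence $\mu(B(N))=N^{d}\mu(B(1))$, and the remaining task is to show that the lattice-point count in $B(N)$ is asymptotic to $\mu(B(N))$. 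That step is carried out by sandwiching the indicator of $B(1)$ between compactly supported step functions adapted to a finite cover by period charts; your thin-part discussion is precisely what is needed to control the tail in the outer approximation, and Masur--Veech finiteness guarantees the inner approximation exhausts the volume. With this adjustment your argument is correct.
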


Here we recall briefly the techniques of Athreya, Eskin and Zorich to count integer points (or square-tiles surfaces, or pillowcase covers) in genus 0, and explain how generalize them to higher genera.

A flat surface $(S,\omega)$ corresponding to an integer point, i.e. a point in the lattice $(H^-_1(\hat S, \hat \Sigma; \Z))^{*}_{\C}$ in local coordinates, can be decomposed into horizontal cylinders with half-integer or integer widths, with zeros and poles lying on the boundaries of these cylinders, that are called singular layers in \cite{AEZ2}. Each layer defines a ribbon graph (graph with a tubular neighborhood inside the surface), called {\it map} in combinatorics. A zero of order $\alpha_i$ belonging to a layer corresponds to a vertex of valency $\alpha_i+2$ in the associated graph, and edges of the graph emerging from this vertex correspond to horizontal rays emerging from the zero in the surface. The graph is metric: edges have half-integer lengths.
A ribbon graph or a map carries naturally a genus: it is the minimal genus of the surface in which it can be embedded. So a ribbon graph associated to a singular layer in $S$ has a genus lower or equal to the genus $g$ of $S$.
Also a ribbon graph has some faces corresponding to the connected components of its complementary in the minimal surface in which it can be embedded. In our case faces correspond to cylinders emerging from the layer. In genus $0$ each face corresponds to a distinct cylinder, in higher genus some cylinders may have both of their boundaries glued to the same layer.
For a ribbon graph $\Gamma$ we have the Euler relation:
\[ \chi_{\Gamma}=2-2g_{\Gamma}=V_{\Gamma}-E_{\Gamma}+F_{\Gamma}\]
where $g_{\Gamma}$ is the genus of $\Gamma$, $V_{\Gamma}$, $E_{\Gamma}$ and $F_{\Gamma}$ the number of vertices, edges and faces of $\Gamma$ respectively.
In the figure below we represent the two maps with one 4-valent vertex: one is of genus 0 and has 3 faces, the other is of genus 1 and has 1 face.
\begin{figure}[h]
\includegraphics{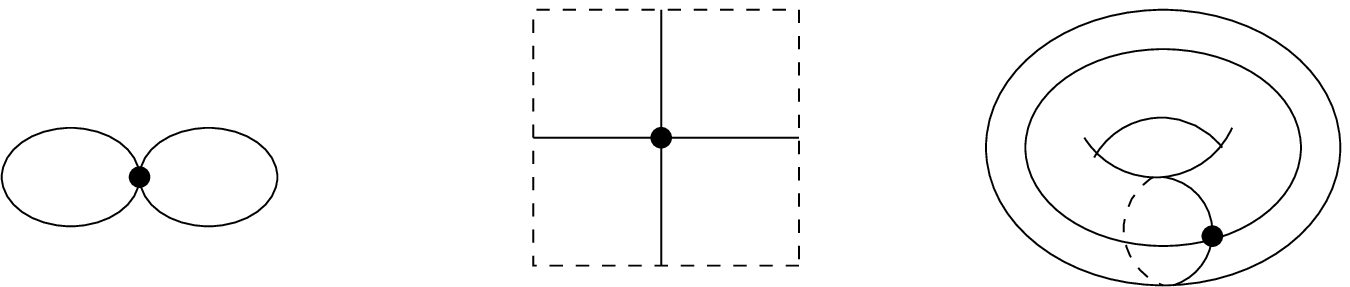}
\begin{picture}(200,60)
\put(0,0)
{\begin{picture}(0,0)
\put(-30,0){genus $0$}
\put(80,0){genus $1$}
\put(135,35){$=$}
\end{picture}}\end{picture}
\end{figure}

We encode the decomposition of the surface $S$ into horizontal cylinders in a supplementary graph $T$, by representing each singular layer by a point in this graph and each cylinder emerging from a layer by an edge emerging from the corresponding vertex. So a layer with $k$ faces corresponds to a $k$-valent vertex in $T$. We record also the information on the order of the zeros lying in each layer, and on the genus of the ribbon graph: that gives a decoration of the graph $T$. For surfaces $S$ of genus $0$ this graph is a tree.

As an example we consider a surface in $\cQ(2, 1^2)$ represented by the following graph:
\begin{figure}[h]
\includegraphics{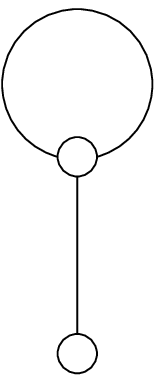}
\includegraphics{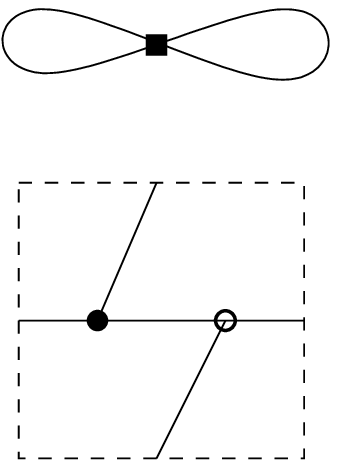}
\begin{picture}(200,80)
\put(0,0)
{\begin{picture}(0,0)
\put(14,12){\tiny{1}}
\put(-13, 11){$(1,1)$}
\put(20, 30){$w_1$}
\put(13, 75){$w_2$}
\put(14,52){\tiny{0}}
\put(-8, 51){$(2)$}
\put(123,16){$l_4$}
\put(114,35){$l_3$}
\put(143,16){$l_5$}
\put(110,58){$l_2$}
\put(140,58){$l_1$}
\end{picture}}\end{picture}
\end{figure}

On the left we figure the graph $T$. The lower vertex represents a ribbon graph of genus 1 with two zeros of order 1 (two 3-valent vertices): the corresponding layer is drawn on the right. The higher vertex corresponds to the ribbon graph of genus 0 with one 4-valent vertex (zero of order 2) drawn on the right. The width $w_i$ of the cylinders and the lengths $l_i$ of the edges of the ribbon graphs are also recorded.  

Below is a flat representation of a surface corresponding to the configuration described above.

\begin{figure}[h]
\includegraphics{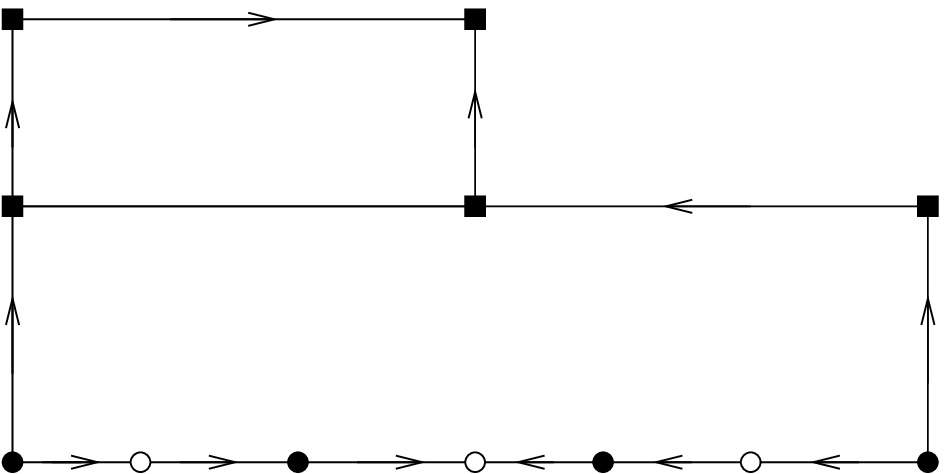}
\begin{picture}(200,80)
\put(0,0)
{\begin{picture}(0,0)
\put(35,80){1}
\put(35, 53){2}
\put(100, 53){1}
\put(10, 0){3}
\put(30,0){4}
\put(55,0){5}
\put(75, 0){3}
\put(95, 0){4}
\put(120, 0){5}
\end{picture}}\end{picture}
\end{figure}

Note that the genus of $S$ is the sum of the genera of the vertices of $T$, and the genus created by loops in the graph $T$: namely, the dimension of the homology of the graph $T$.
In the example, the surface is of genus 2.

Note also that horizontal cylinders in $S$ which are homologous to 0 correspond to separating edges on the graph $T$. It will be useful because with Convention \ref{convreseau}, the width $w$ of a cylinder is an integer if its waist curve is homologous to 0, and half-integer otherwise. In the example $w_1$ is integer and $w_2$ half-integer (furthermore here $w_1$ is necessarily equal to $2w_2$).

We have to choose the $l_i$ such that the length of the boundaries of the faces of the ribbon graphs $\Gamma_j$ correspond to the $w_k$. In the example we have necessarily $w_2=l_1=l_2$  and $w_1=2l_1=2w_1=2(l_3+l_4+l_5)$. So we have only one choice for $l_1$ and $l_2$ and exactly $\sum_{i=1}^{w_1-2}(i-1)=\frac{(w_1-2)^2}{2}$ choices for $(l_3, l_4, l_5)$ (see also Lemma \ref{lem:tool1} in Appendix \ref{app:toolbox}), because with the Convention $\ref{convreseau}$, $w_2$ is an integer and the $l_i$ are half-integer.

To count surfaces of area lower than $N/2$ corresponding to lattice points, we have to sum on the possible graphs $T$ and on the possible corresponding layers $\Gamma$, the number of distinct flat surfaces of this combinatorial type. So for a fixed graph $T$ and fixed layers $\Gamma_i$ we have to count the number of twists $t_j$, widths $w_i$, heights $h_i$ and lengths of saddle connexions $l_i$ satisfying the combinatorial configuration, and such that the area $w\cdot h=\sum_i w_ih_i$ is lower or equal to $N/2$. More precisely by (\ref{eq:volint}) we have to get the asymptotic of this number as $N$ goes to infinity. 
In the example all the $l_i$ are half-integer, $h_1, t_1, h_2, t_2$ also because they are coordinates of saddle connexions that are non homologous to zero, $w_2$ is half-integer and $w_1$ is integer. Twists $t_1$ and $t_2$ take respectively $2w_1$ and $2w_2$ half-integer values. We have already seen that the $l_i$ take $\frac{(w_1-2)^2}{2}$ values (with the condition $w_1=2w_2$). So we want to find the asymptotic when $N\to +\infty$ of \[\sum_{\substack{w_1h_1+w_2h_2\leq N/2\\w_1\in \N, \\w_2, h_1, h_2 \in\N/2}}2w_12w_2\frac{(w_1-2)^2}{2}\mathds 1_{\{w_1=2w_2\}}=\sum_{\substack{w(h_1+2h_2)\leq N/2\\w\in \N, h_1,\\ h_2\in \N/2}}8w^2\frac{(2w-2)^2}{2}\]
Remark that since we want only the term of highest order in $N$ we just need to take the term of highest order in $w_i$, so we can replace $\frac{(2w-2)^2}{2}$ by $\frac{(2w)^2}{2}=2w^2$. In general the asymptotic for such sums is given by Lemma 3.7 of \cite{AEZ2}. For this particular case, it is given by Lemma \ref{lem:tool2} of Appendix \ref{app:toolbox}, and we obtain $ \frac{N^5}{10}(32\zeta(4)-33\zeta(5))$.

This approach is somehow limited because we need to known all the ribbon graphs of a certain type and the number of these ribbon graphs increases fast as the dimension of the stratum grows. So we apply this method to strata of complex dimension $d\leq 5$, using the complete description of ribbon graphs with at most 5 edges given in \cite{JV}: recall that a zero of order $\alpha_i$ corresponds in the ribbon graph to a vertex with $\alpha_i+2$ adjacent edges, so the maximal number of edges of a ribbon graph in a stratum $\cQ(\alpha_1, \dots , \alpha_n)$ is $$\cfrac{\sum_{i=1}^{n}(\alpha_i+2)}{2}=2g-2+n=\dim_{\C}\cQ(\alpha_1, \dots, \alpha_n).$$

In genus 0, Athreya, Eskin and Zorich were able to compute the volumes of all strata of type $\cQ(1^k, -1^{k+4})$ with this method because they used a formula which gives directly the number of ways the cylinders of widths $w_i$ can be glued at a vertex $j$ of a tree $T$. This formula was deduced from a formula of Kontsevich by a recurrence on the number of poles.
The formula of Kontsevich works also for higher genus, but for distinct widths $w_i$, and since cylinders can form some loops in the surface, it is not obvious to get a general formula for the higher genus case, even for the strata $\cQ(1^k, -1^l)$. 

\begin{conv}In the following we write the half-integers in lower case and the integers in capitals.\end{conv}
\subsection{First example: $\cQ(5,-1)$}\label{sect:ex}

We use here the method described above to compute by a simple hand calculation the volume of $\cQ(5, -1)$. In this case, there are only  two possible graphs $T$, and for each graph, only two possible layers. This gives four configurations (note that here we do not speak about configurations of \^homologous cylinders, but about configurations of horizontal cylinders for integer surfaces in the stratum). The computations of the asymptotics are detailed in the appendix \ref{app:toolbox}.

\begin{itemize}
\item Configuration 1:\\
\begin{figure}[h!]
\includegraphics{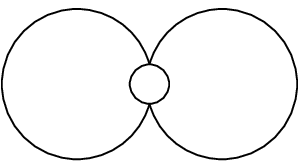}
\includegraphics{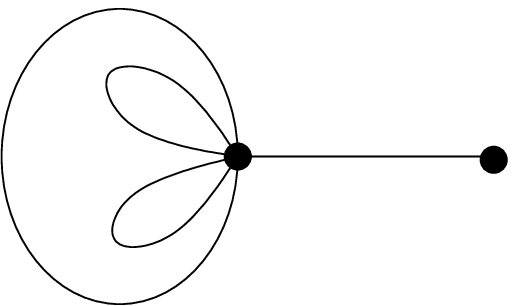}
\begin{picture}(200,60)
\put(0,0)
{\begin{picture}(0,0)
\put(29,33){\tiny{0}}
\put(13,44){$w_1$}
\put(42,44){$w_2$}
\put(95,12){$l_2$}
\put(115,12){$l_4$}
\put(115,32){$l_3$}
\put(160,20){$l_1$}
\end{picture}}\end{picture}
\caption{
\label{fig:config1}
Configuration 1
}
\end{figure} 

Convention \ref{convreseau} implies that all parameters $w_i, h_i, t_i, l_i$ are half-integers. 
The possible lengths of the waist curves of the cylinders are $l_3$,  $l_4$,  $l_2+2l_1$ and  $l_2+l_3+l_4$.
Since $l_2+l_3+l_4 >l_3$ and $l_2+l_3+l_4 >l_4$ we should have $l_3=l_4$ and $l_2+2l_1=l_2+2l_3$: $$\begin{cases}w_1=l_3=l_4\\ w_2=l_2+2l_1=l_2+2l_3\end{cases}  $$ 
There is one way to find such $(l_1,l_2,l_3,l_4)$, if $2w_1< w_2$.
The contribution to the counting for this configuration is:
$$\sum_{(w_1h_1+w_2h_2)\leq N/2} 4w_1w_2(\mathds{1}_{\{2w_1<w_2\}})=\sum_{(W_1H_1+W_2H_2)\leq 2N} W_1W_2(\mathds{1}_{\{2W_1<W_2\}})$$

\item Configuration 2:\\
\begin{figure}[h]
\includegraphics{plot/twoloops.eps}
\includegraphics{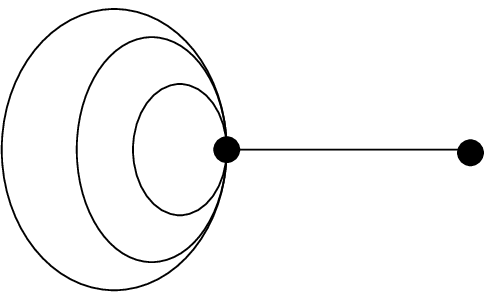}
\begin{picture}(200,60)
\put(0,0)
{\begin{picture}(0,0)
\put(29,33){\tiny{0}}
\put(13,44){$w_1$}
\put(42,44){$w_2$}
\put(95,12){$l_2$}
\put(108,22){$l_3$}
\put(121,32){$l_4$}
\put(160,20){$l_1$}
\end{picture}}\end{picture}
\caption{
\label{fig:config2}
Configuration 2
}
\end{figure} 

All parameters are half-integers.
The possible lengths for the waist curves of the cylinders are $l_4$,  $ l_3+l_4$ ,  $l_2+l_3$ and $ l_2+2l_1$. Since $l_3+l_4>l_4$ and the situation $$\begin{cases} l_4=l_2+2l_1\\ l_3+l_4=l_2+l_3\end{cases}$$ is impossible, the only remaining case is: $$\begin{cases} w_1=l_4=l_2+l_3\\ w_2=l_3+l_4=l_2+2l_1\end{cases}.$$
This implies that $ l_3=l_1 $ and there is only one way to find such $l_i$, but only if $w_1<w_2<2w_1$.
The contribution to the counting is:
$$\sum_{(w_1h_1+w_2h_2)\leq N/2} 4w_1w_2(\mathds{1}_{\{w_1<w_2<2w_1\}})=\sum_{(W_1H_1+W_2H_2)\leq 2N} W_1W_2(\mathds{1}_{\{W_1<W_2<2W_1\}})$$

Summing the contributions of the 2 first configurations gives: 
\begin{eqnarray*}\sum_{(W\cdot H)\leq 2N} W_1W_2(\mathds 1_{\{2W_1<W_2\}}+\mathds{1}_{\{W_1<W_2<2W_1\}}) & = & \sum_{W.H\leq 2N}W_1W_2\mathds 1_{\{W_1<W_2\}}\\ & \sim & \frac{1}{2}\frac{(2N)^4}{4!}(\zeta(2))^2=\cfrac{N^4(\zeta(2))^2}{3}\end{eqnarray*}

\item Configuration 3:\\
\begin{figure}[h]
\includegraphics{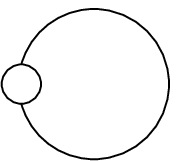}
\includegraphics{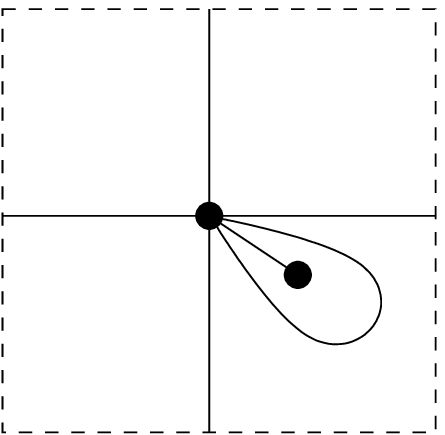}
\begin{picture}(200,70)
\put(0,0)
{\begin{picture}(0,0)
\put(3,33){\tiny{1}}
\put(14,44){$w$}
\put(162,6){$l_3$}
\put(115,30){$l_2$}
\put(128,60){$l_1$}
\put(152,18){$l_4$}
\end{picture}}\end{picture}
\caption{
\label{fig:config3}
Configuration 3
}
\end{figure} 

All parameters are half-integers.
The two lengths are $2l_1+2l_2+l_3$ and $l_3+2l_4$ so we should have $l_4=l_2+l_1$ in order that the two are equal. Then we search the number of $(l_1,l_2,l_3)$ such that $w=l_3+2(l_1+l_2)$. It is a polynomial of $w$ with leading term $\cfrac{1}{4}\cfrac{(2w)^2}{2}=\cfrac{w^2}{2}$.

The contribution to the counting is: $$\sum_{wh\leq N/2} 2\frac{w^3}{2}=\sum_{WH\leq 2N} \left(\frac{W}{2}\right)^3\sim \frac{1}{8}\frac{(2N)^4}{4}\zeta(4)=\cfrac{\zeta(4)}{2}N^4$$

\item Configuration 4:
\begin{figure}[h]
\includegraphics{plot/oneloop.eps}
\includegraphics{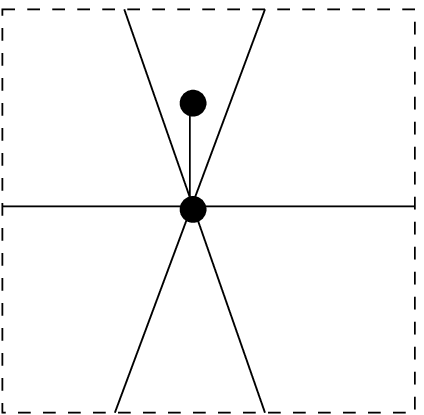}
\begin{picture}(200,70)
\put(0,0)
{\begin{picture}(0,0)
\put(3,33){\tiny{1}}
\put(14,44){$w$}
\put(120,16){$l_3$}
\put(115,30){$l_1$}
\put(130,60){$l_4$}
\put(144,16){$l_2$}
\end{picture}}\end{picture}
\caption{
\label{fig:config4}
Configuration 4
}
\end{figure} 

All parameters are half integers.
The lengths for the waist curves are $2l_1+l_2+l_3$ and $2l_4+l_2+l_3$, so we have $l_1=l_4$. The number of solutions of $w=2l_1+l_2+l_3$ is approximately $\cfrac{1}{2}\cfrac{(2w)^2}{2}=w^2$.

The contribution to the counting for this configuration: $$\sum_{wh\leq N/2}2 w^3=\sum_{WH\leq 2N}2 \left(\frac{W}{2}\right)^3=2\frac{1}{8}\frac{(2N)^4}{4}\zeta(4)=\zeta(4)N^4.$$
\item Total:\\
The sum of the 4 contributions is: $$N^4\left( \cfrac{(\zeta(2))^2}{3}+\frac{3}{2}\zeta(4)\right)=\cfrac{7\pi^4N^4}{2\cdot 3^3\cdot 5}$$
We obtain: $$\Vol Q(5,-1)=2\dim_{\C} Q(5,-1) \frac{7}{2\cdot 3^3\cdot 5}\pi^4=\frac{2^2\cdot 7}{3^3 \cdot 5}\pi^4$$

\end{itemize}

\subsection{Second example: $\cQ(3,-1^3)$}

As previously we compute the volume of this stratum by using the method described in \S~\ref{ssection:voldim5}.

\begin{itemize}
\item Configuration 1\\

\begin{figure}[h!]
\includegraphics{plot/oneloop.eps}
\includegraphics{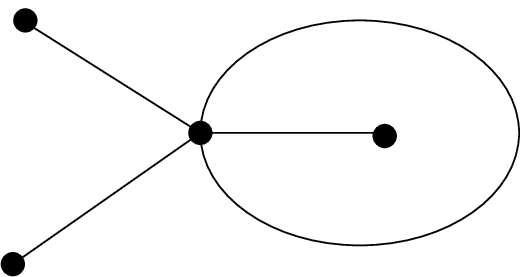}
\begin{picture}(200,60)
\put(0,0)
{\begin{picture}(0,0)
\put(2,34){\tiny{0}}
\put(13,52){$w$}
\put(115,52){$l_1$}
\put(150,15){$l_3$}
\put(115,22){$l_2$}
\put(160,40){$l_4$}
\end{picture}}\end{picture}
\caption{
\label{fig:3-1-1-1config1}
Configuration 1
}
\end{figure}

All parameters are half-integers.
The constraints are given by: $w=l_3+2l_4=l_3+2l_1+2l_2$. There are $\sim \frac{1}{4}\frac{(2w)^2}{2}=\frac{w^2}{2}$ choices for the $l_i$. There are 6 ways to give name to the poles.
The contribution to the counting is $$6\sum_{w.h\leq N/2}2w\frac{w^2}{2}=6\sum_{WH\leq 2N}\left(\frac{W}{2}\right)^3\sim\frac{3}{4}\frac{(2N)^4}{4} \zeta(4)=3\zeta(4)N^4$$

\item Configuration 2

\begin{figure}[h!]
\includegraphics{plot/ballon.eps}
\includegraphics{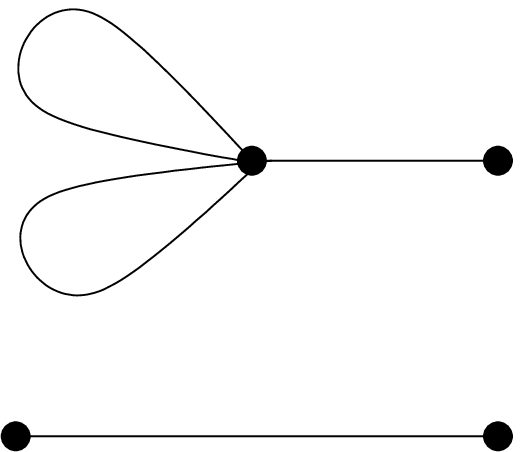}
\begin{picture}(200,60)
\put(0,0)
{\begin{picture}(0,0)
\put(14,32){\tiny{0}}
\put(14,-8){\tiny{0}}
\put(18,14){$w_1$}
\put(32,44){$w_2$}
\put(95,12){$l_3$}
\put(135,5){$l_4$}
\put(95,62){$l_2$}
\put(160,30){$l_1$}
\end{picture}}\end{picture}
\caption{
\label{fig:3-1-1-1config2}
Configuration 2
}
\end{figure}
The parameter $w_1=W_1$ is an integer and all remaining parameters are half-integers.
Note that here there are 3 ways to give names to the poles. The equations 
$$ \begin{cases}w_2=l_2=l_3\\ W_1=2l_1+l_2+l_3=2l_4\end{cases} $$
have one solution if $ W_1>2w_2 $.

The contribution of this configuration is:
$$ 3\sum_{W_1h_1+w_2h_2\leq N/2}2w_12w_2\mathds 1_{\{W_1>2w_2\}}=6\sum_{2W_1H_1+W_2H_2\leq 2N}W_1W_2\mathds 1_{\{W_1>W_2\}} $$

\item Configuration 3

\begin{figure}[h!]
\includegraphics{plot/ballon.eps}
\includegraphics{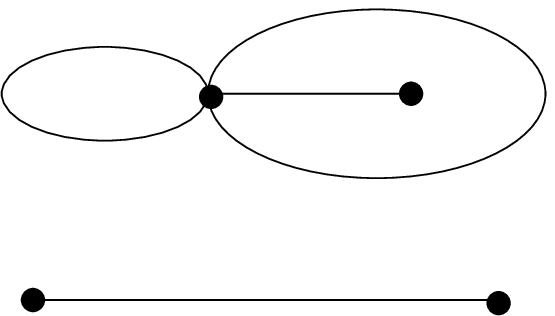}
\begin{picture}(200,60)
\put(0,0)
{\begin{picture}(0,0)
\put(14,32){\tiny{0}}
\put(14,-8){\tiny{0}}
\put(18,14){$w_1$}
\put(32,44){$w_2$}
\put(115,15){$l_1$}
\put(135,5){$l_4$}
\put(165,32){$l_3$}
\put(155,10){$l_2$}
\end{picture}}\end{picture}
\caption{
\label{fig:3-1-1-1config3}
Configuration 3
}
\end{figure}

The parameter $w_1=W_1$ is an integer and all remaining parameters are half-integers.
Note that here there are 3 ways to give names to the poles.

Two ribbon graphs are possible for the second layer:

\begin{figure}[h!]
\includegraphics{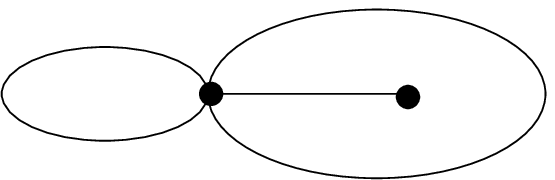}
\includegraphics{plot/3-1-1-1graph3ribbon.eps}
\begin{picture}(200,30)
\put(0,0)
{\begin{picture}(0,0)
\put(14,30){$1$}
\put(18,14){$2$}
\put(-30,10){$2$}
\put(115,10){$1$}
\put(155,14){$2$}
\put(160,30){$2$}
\end{picture}}\end{picture}

\end{figure}

For the first ribbon graph, the equations
$$\begin{cases} W_1=2l_4=l_1+l_2\\ w_2=l_1=l_2+2l_3\end{cases}$$
have one solution if $ w_2<W_1<2w_2 $.

For the second ribbon graph, the equations 
$$\begin{cases} W_1=2l_4=l_1\\ w_2=l_2+2l_3=l_2+l_1\end{cases}$$
have one solution if $ W_1<w_2 $.

The total number of solutions is then: $$ \mathds 1_{\{w_2<W_1<2w_2\}}+\mathds 1_{\{W_1<w_2\}}=\mathds 1_{\{W_1<2w_2\}}-\underset{negligible}{\underbrace{\mathds 1_{\{W_1=w_2\}}}} $$

This gives a contribution:
$$ 3\sum_{W_1h_1+w_2h_2\leq N/2}2w_12w_2\mathds 1_{\{W_1<2w_2\}}=6\sum_{2W_1H_1+W_2H_2\leq 2N}W_1W_2\mathds 1_{\{W_1<W_2\}}  $$

Summing the contributions of configurations 2 and 3 we get:
$$ 6\sum_{2W_1H_1+W_2H_2\leq 2N}W_1W_2=\frac{1}{4}6\sum_{W\cdot H\leq 2N}W_1W_2 \sim \frac{3}{2}\frac{(2N)^4}{4!}(\zeta(2))^2=\frac{5N^4}{2}\zeta(4)$$

\item Configuration 4

\begin{figure}[h!]
\includegraphics{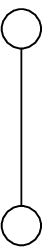}
\includegraphics{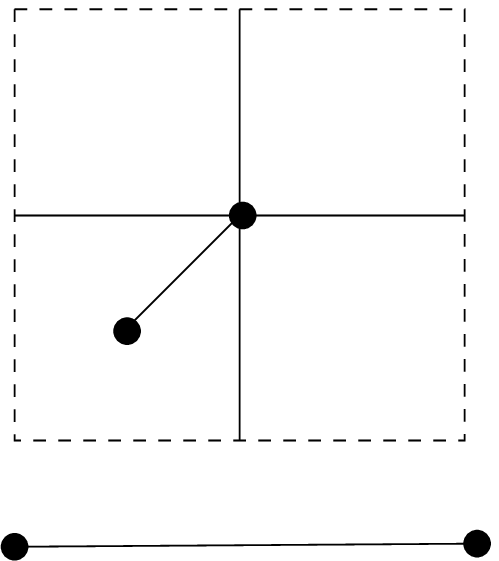}
\begin{picture}(200,60)
\put(0,0)
{\begin{picture}(0,0)
\put(3,53){\tiny{1}}
\put(3,12){\tiny{0}}
\put(6,34){$w$}
\put(135,-5){$l_4$}
\put(115,12){$l_3$}
\put(140,62){$l_1$}
\put(155,30){$l_2$}
\end{picture}}\end{picture}
\caption{
\label{fig:3-1-1-1config4}
Configuration 4
}
\end{figure}

The parameter $w=W$ is an integer and all remaining parameters are half-integers.
Note that here also there are 3 ways to give name to the poles.

The constraints are: $$ W=2l_4=2(l_1+l_2+l_3) $$ 
So there are $ \sim\frac{W^2}{2} $ ways to choose $ (l_1,\dots ,l_4) $.

The contribution of this configuration is:
$$ 3\sum_{W.h\leq N/2}2W\frac{W^2}{2}=3\sum_{WH\leq N}W^3\sim\frac{3N^4}{4}\zeta(4) $$

\item The sum of all contributions is $\frac{25N^4}{4}\zeta(4)$ so it gives 
\[\Vol^{comp}\cQ(3, -1^3)=50\zeta(4)=\frac{5\pi^4}{9}\]
\end{itemize}

\section{Computing generating functions following \cite{EO2}}\label{sect:EO}
In the Abelian case, volumes of strata were computed up to genus 20 by Eskin--Okounkov using representation theory and modular forms. In the quadratic case, they developed a similar theory but some additional difficulties arise for the computation of volumes.
The aim of this section is to recall the procedure to compute volumes using these results, to explain where the difficulties occur in the computations, to compute finally as many volumes as possible in the quadratic case, and to give the normalization factor between their convention and the \cite{AEZ}-convention.

In this section we introduce a new notation for simplicity:
\begin{defi} Let $\cQt(\alpha)$ denote the moduli space of pairs $(S,q)$ of Riemann surfaces $S$ and meromorphic quadratic differentials $q$ with exactly $n$ singularities of orders given by $\alpha_1, \dots \alpha_n$, where $q$ is allowed to be a global square of an Abelian differential.
\end{defi}
Note that if there is at least one zero of odd multiplicity then $\cQt(\alpha)=\cQ(\alpha)$ otherwise $\cQt(\alpha)=\cQ(\alpha)\cup\cH(\alpha/2)$.

\subsection{Convention for the normalization of the volume: description of the lattice}\label{convEO}
The convention of Eskin and Okounkov for the normalization of the volume element is slightly different from the previous one, due to Athreya, Eskin and Zorich. In particular the ``integer points'' in the strata will be also tiled by squares, but the chosen lattice differs. In fact here lattice points are covers of the torus in the Abelian case, and covers of the pillow in the quadratic case, with some constraints that we recall here.

\subsubsection{Abelian case}
Let $ \T^2 =\C/(\Z+i\Z)$ be the standard torus.
For a given stratum $ \cH(\beta)=\cH(\beta_1, \dots, \beta_n) $, fix $ n $ points $P_i$ in $ \T^2 $, and denote $ \mu_i=\beta_i+1 $ for $ i=1\dots n $. Then the chosen lattice for this stratum is the following: 
\begin{align*}L_{ab}(\cH(\beta))=\{S\in\cH(\beta); \; S\mbox{ is a cover of }\T^2\mbox{ ramified over each }P_i\\
\mbox{ with ramification profile }\mu_i \}\end{align*}
We denote \[\Cov_d(\mu)=\Card\{S\in L_{ab}(\cH(\beta)), S\mbox{ is of degree }d\}.\]
We introduce also the number \[w(\mu)=|\mu|+l(\mu),\] where $l(\mu)$ is the number of parts in $\mu$.
\subsubsection{Quadratic case}
Let $\cQt(\alpha)$ be a stratum of quadratic differentials. The set of singularities $(\alpha_1, \dots, \alpha_n)$ corresponds a couple of partitions $(\mu,\nu)$ by the following formulas: assume that the even zeros are the $b$ first ones, then we define \begin{align*}\mu_i =\frac{\alpha_{i}}{2}+1\mbox{ for }i=1\dots b\\ \nu_i  = \alpha_{i+b}+2\mbox{ for }i=1\dots n-b.\end{align*}
This gives a 1:1 correspondence between sets of singularity orders of quadratic differentials and couples of partitions, the second being a partition of an even number into odd parts (correspondence between \cite{AEZ} and \cite{EO2} notation).
Let $\fB=\T^2/\pm$ (called ``pillow'') and fix $b$ points $P_i$ on it (outside of the corners). In this setting, the chosen lattice is the following: 

\begin{align*}L_{quad}(\cQt(\alpha))=\{S\in\cQt(\alpha); \;\exists d>0, \;  S\mbox{ is a }2d\mbox{ cover of }\fB\mbox{ ramified }\\
\mbox{ over each }P_i\mbox{ with ramification profile }(\mu_i,1^{2d-\mu_i}), \\ \mbox{over }0\mbox{ with ramification profile }(\nu, 2^{d-|\nu|/2})\\ \mbox{ and over the three other corners with ramification profile }(2^d) \}\end{align*}

We denote \[\Cov_{2d}(\mu,\nu)=\Card\{S\in L_{quad}(\cQt(\alpha)), S\mbox{ is of degree }2d\}.\]
We introduce also the following number:
$$w(\mu, \nu)=|\mu| +l(\mu)+|\nu|/2.$$ 
We can express all data for $S\in L_{quad}(\cQt(\alpha))$ in terms of $\mu$ and $\nu$:
\begin{itemize}
\item genus $g=\frac{1}{2}(|\mu| -l(\mu) +|\nu|/2 -l(\nu))+1\;(*)$
\item genus of the double cover $\hat g=|\mu| -l(\mu) +|\nu|/2-l(\nu)/2+1$
\item efficient genus $g_{\eff}=\frac{1}{2}(|\mu|-l(\mu)+|\nu|/2)$
\item complex dimension $\dim_\C=|\mu|+|\nu|/2$
\end{itemize}

\subsection{Computation of volumes in the Abelian case}
We recall here some of the results of \cite{EO} that are used to compute volumes.
Let introduce the following generating functions (here we modify the notations of \cite{EO} into notations of \cite{EO2}):
\begin{align*}
Z(\mu; q)=\sum_{d\geq 1} \Cov_d(\mu)q^d\\
Z'(\mu; q)=\sum_{d\geq 1} \Cov'_d(\mu)q^d=\cfrac{Z(\mu;q)}{Z(\emptyset; q)}
\end{align*}
that enumerate covers and covers without unramified components respectively. Here \[Z(\emptyset; q)=\prod_{n\geq 1} (1-q^n)^{-1}\] is the generating function for the unramified  coverings.

Finally we denote $$Z^{\circ}(\mu; q)=\sum_{d\geq 1} \Cov^{\circ}_d(\mu)q^d$$  the generating function for the connected coverings.

Introducing the $q$-bracket of a shifted symmetric function $F$: \[\langle F\rangle_q=\cfrac{1}{Z(\emptyset; q)}\sum_{\lambda\in\Pi}q^{|\lambda|}F(\lambda)\] where $\Pi$ denote the set of partitions, Eskin and Okounkov showed (Proposition 2.11 in \cite{EO}):
\begin{prop}
\[Z'(\mu; q)=\langle f_{\mu_1}\dots f_{\mu_n}\rangle_q,\]
where $f_{\mu_i}(\lambda)=f_{\mu_i, 1, \dots , 1}(\lambda)$ is the central character of an element of cycle-type $(\mu_i, 1, \dots, 1)$ in the representation $\lambda$.
\end{prop}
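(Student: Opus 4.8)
The plan is to reduce the enumeration of covers to a count of tuples in the symmetric group via monodromy, and then to evaluate that count with the Frobenius--Burnside character formula on a genus-one surface. A degree-$d$ cover of $\T^2$, possibly disconnected, branched over $P_1,\dots,P_n$ with ramification profile $(\mu_i,1^{d-\mu_i})$ at $P_i$ is the same datum, after labelling the fibre over a base point, as a homomorphism $\pi_1(\T^2\setminus\{P_1,\dots,P_n\})\to S_d$ sending the loop around $P_i$ into the conjugacy class $C_i$ of a single $\mu_i$-cycle, with no transitivity imposed. Since $\pi_1$ of the once-holed torus is generated by $a,b,\sigma_1,\dots,\sigma_n$ with the single relation $[a,b]\,\sigma_1\cdots\sigma_n=1$, the first step is to record that, each cover being weighted by $1/|\mathrm{Aut}|$,
\[
\Cov_d(\mu)=\frac{1}{d!}\,\#\bigl\{(a,b,\sigma_1,\dots,\sigma_n)\in S_d^{\,n+2}\ :\ \sigma_i\in C_i,\ [a,b]\,\sigma_1\cdots\sigma_n=1\bigr\},
\]
the prefactor $1/d!$ coming from forgetting the labelling of the fibre, by the orbit--stabiliser principle for the conjugation action of $S_d$.

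Next I would evaluate the tuple count inside the centre of the group algebra $\C[S_d]$. Two classical facts are needed. First, the class sum $K_i=\sum_{w\in C_i}w$ acts on the irreducible module $V_\lambda$ as the scalar $|C_i|\,\chi^\lambda(c_i)/\dim V_\lambda$, which is exactly the central character $f_{\mu_i}(\lambda)$ appearing in the statement. Second, the summed commutator $\sum_{a,b\in S_d}\rho_\lambda([a,b])$ acts on $V_\lambda$ as the scalar $(d!/\dim V_\lambda)^2$, which follows from two applications of Schur's lemma. Reading off the coefficient of the identity in the central element $\bigl(\sum_{a,b}[a,b]\bigr)K_1\cdots K_n$, and using that the identity has trace $d!$ in the regular representation while every other element has trace $0$, yields
\[
\Cov_d(\mu)=\sum_{\lambda\vdash d}\ \prod_{i=1}^{n}\frac{|C_i|\,\chi^\lambda(c_i)}{\dim V_\lambda}=\sum_{\lambda\vdash d}\ \prod_{i=1}^{n}f_{\mu_i}(\lambda).
\]
The single commutator factor produces precisely the power of $\dim V_\lambda$ that cancels the powers contributed by the $n$ class sums, leaving no leftover denominator: this exact cancellation is the arithmetic special to genus one.

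Finally I would package this into generating functions. Summing over $d$ with weight $q^d$ and regrouping the sum over $\lambda\vdash d$ into a sum over all partitions $\lambda\in\Pi$ gives
\[
Z(\mu;q)=\sum_{d\ge 1}\Cov_d(\mu)\,q^d=\sum_{\lambda\in\Pi}q^{|\lambda|}\prod_{i=1}^{n}f_{\mu_i}(\lambda).
\]
Dividing by $Z(\emptyset;q)=\prod_{n\ge 1}(1-q^n)^{-1}=\sum_{\lambda\in\Pi}q^{|\lambda|}$, which is the $\mu=\emptyset$ case of the same formula with an empty product of central characters, reproduces verbatim the definition of the $q$-bracket. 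Hence $Z'(\mu;q)=Z(\mu;q)/Z(\emptyset;q)=\langle f_{\mu_1}\cdots f_{\mu_n}\rangle_q$, as claimed.

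I expect the main obstacle to be bookkeeping of normalisations rather than any deep difficulty: keeping the exponent of $\dim V_\lambda$ correct so that the class sums and the genus-one commutator factor combine to leave exactly $\prod_i f_{\mu_i}(\lambda)$, and matching the mass-weighted count $\Cov_d$ with the tuple count so that the $1/d!$ and the $d!$ from the commutator cancel cleanly. One should also verify that the character count naturally enumerates \emph{all} degree-$d$ covers, including those with purely unramified connected components, so that the quotient by $Z(\emptyset;q)$ is precisely what strips those components off and matches the definition of $Z'$.
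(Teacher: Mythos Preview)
Your argument is correct and complete. The paper does not actually prove this proposition: it is quoted verbatim as Proposition~2.11 of \cite{EO} and used as a black box, so there is no in-paper proof to compare against. What you have written is precisely the classical Burnside--Frobenius derivation that underlies the result in \cite{EO}: monodromy turns weighted covers into tuple counts in $S_d$, the genus-one commutator identity $\sum_{a,b}\rho_\lambda([a,b])=(d!/\dim V_\lambda)^2\,\mathrm{Id}$ exactly cancels the dimension factors coming from the class sums, and the exponential-formula factorisation $Z=Z'\cdot Z(\emptyset)$ identifies the quotient with the $q$-bracket. Your bookkeeping of the powers of $d!$ and $\dim V_\lambda$ is right, and your final remark about the character count seeing all covers (including fully unramified pieces) is the correct justification for why dividing by $Z(\emptyset;q)$ yields $Z'$.
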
 
The algebra $\Lambda^*$ of shifted symmetric functions is generated by the functions:
\[p_k(\lambda)=\sum_{i=0}^\infty [(\lambda_i-i+\frac{1}{2})^k-(i+\frac{1}{2})^k]+(1-2^{-k})\zeta(-k).\]
The decomposition of the functions $f_{\mu_i}$ in term of the $p_k$ is known (see \cite{Ls} for example), so the $q$-brackets of products of function $f_i$ are polynomials in the $q$-brackets of products of functions $p_k$, that are quasi-modular forms of weight $w(\mu)$ (see \cite{EO} \S 5.1).

The generating function $Z'$ is then totally described, and so is $Z^{\circ}$ by inclusion-exclusion (cf Proposition 2.11 of \cite{EO}). To extract from this generating function the values of the volumes they show that (Proposition 1.6 and Proposition 3.2):
\begin{prop}
\[Z^\circ(\mu;q)\sim \frac{\dim_\R(\cH(\beta))\Vol(\cH_1(\beta))\cdot|\mu|!}{(1-q)^{|\mu|}}\;\mbox{ as }q\to 1\]
\end{prop}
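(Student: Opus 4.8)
The plan is to extract the singularity of $Z^\circ(\mu;q)$ at $q=1$ from the growth of its coefficients $\Cov_d^\circ(\mu)$, which have a direct geometric meaning, and then to feed that growth into the lattice-point formula for the volume. First I would record that a connected cover counted by $\Cov_d^\circ(\mu)$ is precisely a point of the stratum $\cH(\beta)$ given by a connected square-tiled surface of degree $d$, i.e.\ an integer point of the Eskin--Okounkov lattice $L_{ab}(\cH(\beta))$ whose flat area equals $d$ (each of the $d$ unit squares contributing area $1$). Consequently the partial sums of the coefficients count integer points of bounded area,
\[
\sum_{d\le T}\Cov_d^\circ(\mu)=\#\{\,S\in L_{ab}(\cH(\beta))\ \text{of area}\le T\,\},
\]
and the Abelian analogue of the Athreya--Eskin--Zorich relation \eqref{eq:volint} evaluates the right-hand side asymptotically in terms of $\Vol\cH_1(\beta)$ and $\dim_\R\cH(\beta)$: the leading term is $\dim_\R(\cH(\beta))\cdot\Vol\cH_1(\beta)$ times a fixed power of $T$, once the area normalisation and the covolume of $L_{ab}$ are accounted for.

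Second, I would turn this partial-sum asymptotic into the stated pole of the generating series by the elementary Abelian (Tauberian) direction. Writing $Z^\circ(\mu;q)=(1-q)\sum_{T\ge0}\bigl(\sum_{d\le T}\Cov_d^\circ(\mu)\bigr)q^{T}$ by Abel summation and using $\sum_{T\ge1}T^{k}q^{T}\sim k!\,(1-q)^{-(k+1)}$ as $q\to1^-$, an asymptotic of the form $\sum_{d\le T}\Cov_d^\circ(\mu)\sim A\,T^{k}$ yields $Z^\circ(\mu;q)\sim A\,k!\,(1-q)^{-k}$. Only the partial-sum asymptotic is needed, so no term-by-term control of $\Cov_d^\circ(\mu)$ is required; matching $k$ with $|\mu|$ and $A$ with $\dim_\R(\cH(\beta))\cdot\Vol\cH_1(\beta)$ then gives the proposition.

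The delicate point is therefore not the Tauberian passage but the exact constant: one has to pin down simultaneously the order of the pole as $|\mu|$ and the leading coefficient as $\dim_\R(\cH(\beta))\cdot\Vol\cH_1(\beta)\cdot|\mu|!$. This requires tracking the area convention of Convention \ref{convarea} (surfaces of area $1/2$ in $\cH_1$ against degree $=$ area for the covers), the covolume of the Eskin--Okounkov lattice $L_{ab}$ relative to the period lattice, and the contribution of the unramified and disconnected covers removed in passing from $Z$ to $Z^\circ$, which shifts the order of the leading singularity. This bookkeeping is exactly what is carried out in Propositions~1.6 and~3.2 of \cite{EO}, whose normalisation I would follow.
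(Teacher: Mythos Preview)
Your approach is correct and is essentially the same as the one the paper invokes: the statement is not proved in the paper but attributed to Propositions~1.6 and~3.2 of \cite{EO}, and those two propositions do exactly what you describe---Proposition~1.6 gives the partial-sum asymptotic $\sum_{d\le T}\Cov_d^\circ(\mu)\sim A\,T^{|\mu|}$ from the lattice-point/volume interpretation, and Proposition~3.2 is the Abelian passage $\sum_T q^T T^k\sim k!\,(1-q)^{-(k+1)}$ you write out. The paper's own proof of the quadratic analogue (the proposition immediately following) follows the identical template.

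One small remark on your last paragraph: the passage from $Z$ to $Z^\circ$ does not really ``shift the order of the leading singularity'' in any way you need to track, since your argument begins directly with the geometric meaning of $\Cov_d^\circ$ and never touches $Z$ or $Z'$. The genuine bookkeeping you flag is rather that the covers counted by $\Cov_d^\circ(\mu)$ have their branch points over \emph{fixed} points $P_1,\dots,P_n$ of the base torus, so the relative periods of the resulting square-tiled surface are constrained to lie in a fixed coset of $(\Z\oplus i\Z)^{n-1}$; this, together with the area/degree identification, is what makes the pole order come out as $|\mu|$ rather than $\dim_\C\cH(\beta)=|\mu|+1$ and fixes the constant. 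You are right that this is handled in \cite{EO}.
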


Their method to compute the volumes is then the following:
\begin{itemize}
\item They compute the coefficient corresponding to the highest weight in the decomposition of the $f_{\mu_i}$ in the algebra basis of $p_k$ (Theorem 5.5)
\item They compute the highest term in the asymptotic of the $q$-brackets of products of $p_k$ as $q$ goes to 1 (Theorem 6.7)
\item They obtain the volume thanks to the previous proposition (Proposition 1.6 and 3.2.)
\end{itemize}

\subsection{Computation of volumes in the quadratic case}
First let us recall the main results of \cite{EO2}, and then let us detail the computations in this case.
Similarly to the case of Abelian differentials, we introduce the following generating functions:
\begin{align*}
Z(\mu,\nu; q)=\sum_{d\geq 1} \Cov_{2d}(\mu,\nu)q^{2d}\\
Z'(\mu,\nu; q)=\sum_{d\geq 1} \Cov'_{2d}(\mu,\nu)q^{2d}=\cfrac{Z(\mu,\nu;q)}{Z(\emptyset,\emptyset; q)}\\
Z(\emptyset,\emptyset; q)=\prod_{n\geq 1} (1-q^{2n})^{-1/2}\\
Z^\circ(\mu, \nu;q)=\sum_{d}Cov^\circ_{2d}(\mu, \nu)q^{2d}
\end{align*}
enumerating the covers, the covers without unramified connected components,the unramified covers, the connected covers respectively. 

The algebra $\Lambda^*$ of shifted symmetric functions is now enlarged to the algebra $\overline\Lambda$ generated by the functions $p_k$ as before and the functions $\overline{p_k}$ defined by:

\[\overline{p_k}(\lambda)=\sum_{i=1}^\infty \left[(-1)^{\lambda_i-i+1}(\lambda_i-i+\frac{1}{2})^k-(-1)^{-i+1}(-i+\frac{1}{2})^k\right]+c_k,\]
where the $c_k$ are determined by the expansion
\[\sum_k \cfrac{z^k}{k!}\overline{p_k}(\emptyset)=\cfrac{1}{e^{z/2}+e^{-z/2}}.\]

For any function $F$ the authors of \cite{EO2} introduce the $\w$-bracket as:
\[\langle F\rangle_\w=\cfrac{1}{Z(\emptyset, \emptyset; q)}\sum_{\lambda\in B\Pi}q^{|\lambda|}\w(\lambda)F(\lambda),\]
with \[\w(\lambda)=\left(\cfrac{\dim \lambda}{|\lambda|!}\right)f_{2,2,\dots, 2}(\lambda)\] for $\lambda\in B\Pi$, where $B\Pi$ denote the set of balanced partitions, that is, partitions $\lambda$ such that $\overline{p}_0(\lambda)=1/2$. For the aim of this section we only need to resume the results of \cite{EO2}, so we do not explain the possible interpretation of the objects that we consider.

The authors of \cite{EO2} show the following formula, similar to the Abelian case:
\begin{prop}\label{prop:Z'}
\[Z'(\mu, \nu; q)=\left\langle \cfrac{f_{\nu, 2, 2, \dots, 2}}{f_{2, 2, \dots, 2}}\prod_i f_{\mu_i}\right\rangle_\w\]
\end{prop}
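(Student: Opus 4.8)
The plan is to count pillowcase covers by their monodromy and convert the count into a character sum over the symmetric group, exactly as in the torus case recalled above, but keeping careful track of the four corners and of the $\pm$-involution defining $\fB=\T^2/\pm$. First I would fix the degree $2d$ and encode a cover $S\in L_{quad}(\cQt(\alpha))$ by the monodromy representation of the orbifold fundamental group of $\fB$ punctured at the marked points $P_i$. This produces a tuple of permutations in $S_{2d}$: three involutions of cycle type $(2^d)$ about the three ``passive'' corners, one permutation of type $(\nu,2^{d-|\nu|/2})$ about the corner $0$ carrying the odd zeros, and $b$ permutations of type $(\mu_i,1^{2d-\mu_i})$ about the marked points, all subject to the single product-one relation. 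Thus $\Cov_{2d}(\mu,\nu)$ becomes a weighted count of such tuples, each with weight $1/|\mathrm{Aut}|$.

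Next I would apply the Frobenius/Burnside formula to diagonalize this count over the irreducible representations $\lambda\vdash 2d$. Each ramification profile $\rho$ then contributes its central character $f_\rho(\lambda)$, the profiles $(\mu_i,1^{\dots})$ giving precisely the functions $f_{\mu_i}(\lambda)$ already used in the Abelian case, while the underlying genus-$0$ surface supplies the dimension factors $\dim\lambda/(2d)!$ dictated by its Euler characteristic. The generating variable $q^{2d}=q^{|\lambda|}$ then assembles the degrees into a power series, and dividing by $Z(\emptyset,\emptyset;q)$ to discard the unramified connected components turns $Z$ into $Z'$ and produces the prefactor $1/Z(\emptyset,\emptyset;q)$ of the bracket. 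Equivalently, one realizes $Z(\mu,\nu;q)$ as a vacuum matrix element in the infinite-wedge space, the propagator between corner insertions producing the factors $q^{|\lambda|}$ and the corner operators producing the weight and the projection described below.

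The crux, and the step where the quadratic case genuinely departs from the Abelian one, is to show that the combined effect of the three passive corners together with the involution defining $\fB$ is exactly repackaged by (i) restricting the sum to balanced partitions $\lambda\in B\Pi$, those with $\overline{p}_0(\lambda)=1/2$, and (ii) the weight $\w(\lambda)=(\dim\lambda/|\lambda|!)\,f_{2,2,\dots,2}(\lambda)$. This is where the auxiliary functions $\overline{p_k}$ and the constants $c_k$ enter: the $\pm$-symmetry imposes a parity constraint on the sheets lying over the corners, which is precisely the balance condition, and the symmetric corner data reorganize, rather than naively multiply, into the single weight $\w(\lambda)$ and the balanced projection. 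Verifying this reorganization is the main obstacle, as it is the ingredient with no Abelian analogue and the source of the additional difficulties noted in the text.

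Granting this, the distinguished corner at $0$ is treated last: replacing its generic profile by $(\nu,2^{d-|\nu|/2})$ multiplies the summand by $f_{\nu,2,2,\dots,2}(\lambda)$ while removing the one factor $f_{2,2,\dots,2}(\lambda)$ already absorbed into $\w(\lambda)$, which accounts for the ratio $f_{\nu,2,\dots,2}/f_{2,\dots,2}$ in the statement; inserting the $b$ even zeros at the marked points multiplies by $\prod_i f_{\mu_i}(\lambda)$. Collecting all factors yields exactly $Z'(\mu,\nu;q)=\langle (f_{\nu,2,\dots,2}/f_{2,\dots,2})\prod_i f_{\mu_i}\rangle_\w$. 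I expect steps (i)--(ii) to be the hard part, and the remaining insertions to be routine adaptations of the Abelian Hurwitz calculus.
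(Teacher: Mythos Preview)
The paper does not prove this proposition; it is quoted from \cite{EO2} without proof, as signalled by the sentence ``The authors of \cite{EO2} show the following formula, similar to the Abelian case.'' There is therefore no proof in the present paper to compare your attempt against.

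Your sketch is a reasonable outline of the Eskin--Okounkov argument itself: the monodromy count for pillowcase covers, the Burnside/Frobenius character expansion over $\lambda\vdash 2d$, and the insertion of the $f_{\mu_i}$ for the even zeros are all standard and correct in spirit. Your identification of the hard step is also right: the reorganization of the three passive corners and the orbifold involution into the weight $\w(\lambda)$ together with the restriction to balanced partitions is exactly what \cite{EO2} has to establish, and it has no Abelian analogue. If you want to turn your outline into an actual proof you must go to \cite{EO2} for that computation; the present paper only uses the result as a black box for the volume computations of Section~\ref{sect:EO}.
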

The underlying sum in this formula begins for partitions of $\max(|\nu|, \mu_i)$.

Similarly to the Abelian case we can extract the volumes from the asymptotic of the generating function as $q\to 1$. Assume first that $\cQt(\alpha)=\cQ(\alpha)$. 
\begin{prop} Let $\dim_\C=\dim_\C(\cQ(\alpha))$ and $\dim_\R=2\dim_\C$. Then:
\[Z^\circ(\mu, \nu;q)\sim \frac{\Vol^{EO}(\cQ_1(\alpha))}{\dim_\R}\frac{(\dim_\C)!}{(1-q)^{\dim_\C}}\;\mbox{ as }q\to 1\]
\end{prop}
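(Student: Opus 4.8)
The plan is to recover the $q\to1$ behaviour of $Z^\circ(\mu,\nu;q)$ from an integer-point count in the stratum, exactly parallel to the Abelian argument recalled above (Propositions~1.6 and~3.2 of \cite{EO}). By construction of the lattice $L_{quad}(\cQt(\alpha))$, the connected degree-$2d$ pillowcase covers counted by $\Cov^\circ_{2d}(\mu,\nu)$ are precisely the ``integer points'' of $\cQ(\alpha)$ in the Eskin--Okounkov normalization. Since we assume $\cQt(\alpha)=\cQ(\alpha)$, there is at least one zero of odd order, so none of these connected covers is a global square of an Abelian differential; every enumerated cover is then a genuine point of $\cQ(\alpha)$ and no inclusion--exclusion correction against $\cH(\alpha/2)$ is needed. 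The pillow $\fB=\T^2/\pm$ is a flat surface of area $1/2$ (its orientation double cover being the unit-area torus), so a degree-$2d$ cover carries a flat structure of area $2d\cdot\tfrac12=d$; hence the covers of degree $2d$ are exactly the integer points of area $d$.

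Next I would translate the volume into the asymptotics of the partial sums $S_M:=\sum_{d\le M}\Cov^\circ_{2d}(\mu,\nu)$, which is the number of integer points of $\cQ(\alpha)$ of area at most $M$. The Eskin--Okounkov analogue of the lattice-point formula \eqref{eq:volint} reads
\[
\Vol^{EO}(\cQ_1(\alpha))=2\dim_\C\cdot\lim_{N\to\infty}N^{-\dim_\C}\,S_{N/2},
\]
from which
\[
S_M\sim\frac{2^{\dim_\C-1}}{\dim_\C}\,\Vol^{EO}(\cQ_1(\alpha))\,M^{\dim_\C}\qquad(M\to\infty),
\]
the exponent $\dim_\C$ reflecting that periods scale like the square root of the area in a space of real dimension $\dim_\R=2\dim_\C$.

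Finally I would pass from the partial sums to the generating function by Abel summation. Writing $Q=q^2$, one has the identity
\[
Z^\circ(\mu,\nu;q)=\sum_{d\ge1}\Cov^\circ_{2d}(\mu,\nu)\,Q^{d}=(1-Q)\sum_{M\ge1}S_M\,Q^{M}.
\]
The elementary Abelian theorem for power series turns $S_M\sim C\,M^{\dim_\C}$ into $\sum_M S_M Q^M\sim C\,(\dim_\C)!\,(1-Q)^{-\dim_\C-1}$, whence $Z^\circ\sim C\,(\dim_\C)!\,(1-Q)^{-\dim_\C}$ with $C=\tfrac{2^{\dim_\C-1}}{\dim_\C}\Vol^{EO}(\cQ_1(\alpha))$. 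Using $1-Q=1-q^2\sim2(1-q)$ as $q\to1$, the powers of $2$ collapse and give precisely
\[
Z^\circ(\mu,\nu;q)\sim\frac{\Vol^{EO}(\cQ_1(\alpha))}{\dim_\R}\,\frac{(\dim_\C)!}{(1-q)^{\dim_\C}}.
\]

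The main obstacle is the second step: justifying the Eskin--Okounkov version of the lattice-point formula \eqref{eq:volint} in their normalization --- that enumerating connected pillowcase covers really computes $\Vol^{EO}(\cQ_1(\alpha))$ --- and keeping the powers of $2$ consistent, both the factor from the degree-to-area conversion on $\fB$ and the factor hidden in $1-q^2\sim2(1-q)$. These are exactly the bookkeeping discrepancies responsible for the difference between the Eskin--Okounkov and Athreya--Eskin--Zorich normalizations, and they must be tracked with care. A secondary point is the legitimacy of the Abelian theorem and the control of error terms: this is guaranteed because $d\mapsto\Cov^\circ_{2d}(\mu,\nu)$ is (the connected part of) a quasi-polynomial, a consequence of the quasimodularity of the $\w$-bracket expression of Proposition~\ref{prop:Z'}.
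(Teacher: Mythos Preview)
Your argument is correct and follows essentially the same route as the paper's own proof: introduce the partial sums $S_M=\sum_{d\le M}\Cov^\circ_{2d}(\mu,\nu)$ (the paper's $Z^\circ_{2D}$), identify them with the number of integer points of bounded area via Proposition~1.6 of \cite{EO}, and then pass to the generating function by Abel summation as in Proposition~3.2 of \cite{EO}. Your bookkeeping of the powers of $2$ (from the degree-to-area conversion on $\fB$ and from $1-q^2\sim 2(1-q)$) is in fact cleaner than the paper's write-up, which contains a couple of apparent slips that cancel in the end.
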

\begin{proof}
Introducing 
\[Z^\circ_{2D}(\mu, \nu)=\sum_{d=1}^{D} Cov^\circ_{2d}(\mu, \nu)\]
(recall that 
and following the proof of \cite{EO}, Prop 1.6 we get:
\[Z^\circ_{2D}(\mu, \nu)\sim\rho(\cQ_1(\mu, \nu))(2D)^{\dim_\C}\;\mbox{ as }D\to\infty,\]
where \[\rho(\cQ_1(\mu, \nu))=\cfrac{\Vol^{EO}(C(\cQ_1(\alpha)))}{\dim_\R(\cQ(\alpha))}.\]

Following the proof of Prop 3.2 in \cite{EO} we get:
\begin{align*}
\cfrac{1}{1-q^2}Z^\circ(\mu, \nu;q)&=\sum_{d=1}^\infty q^{2d}Z^\circ_{2d}(\mu, \nu)\\
& \sim \sum_{d=1}^\infty q^{2d}\rho(\cQ_1(\mu, \nu))(2d)^{\dim_\C}\;\mbox{ as }q\to 1\\
&\sim \rho(\cQ_1(\mu, \nu))\sum_{d}q^{2d}d^{\dim_\C}\;\mbox{ as }q\to 1\\
&\sim \rho(\cQ_1(\mu, \nu)) \cfrac{2^{\dim_\C}\Gamma(\dim_\C+1)}{(1-q^2)^{\dim_\C+1}}\;\mbox{ as }q\to 1
\end{align*}
which ends the proof.
\end{proof}
If $\cQ(\alpha)\neq\cQt(\alpha)$ then one should first extract the purely quadratic contribution $Z^\circ_{quad}(\mu, \nu; q)$ (see \S \ref{step4}), and then the same result holds for $Z^\circ_{quad}(\mu, \nu; q)$.

The method for the Abelian case does not applied here, because there is no equivalent of Theorem 5.5 and Theorem 6.7 of \cite{EO} here (see \cite{R-Z}).
Let us explain how to compute the volumes in this case.

The principal result of their article (Theorem 1 of \cite{EO2}) is:
\begin{theo}[Eskin--Okounkov]
$Z'(\mu, \nu;q)$ is a polynomial in $E_2(q^2)$, $E_2(q^4)$, and $E_4(q^4)$ of weight $w(\mu, \nu)$
\end{theo}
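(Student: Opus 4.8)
The plan is to reduce the statement about $Z'(\mu,\nu;q)$ to a statement about the $\w$-brackets of the shifted symmetric functions appearing in Proposition \ref{prop:Z'}, and then to establish the quasi-modularity of those brackets. By Proposition \ref{prop:Z'} we have
\[
Z'(\mu,\nu;q)=\left\langle \frac{f_{\nu,2,2,\dots,2}}{f_{2,2,\dots,2}}\prod_i f_{\mu_i}\right\rangle_\w,
\]
so the first step is to expand the combination $\frac{f_{\nu,2,\dots,2}}{f_{2,\dots,2}}\prod_i f_{\mu_i}$ in the algebra $\overline\Lambda$ generated by the $p_k$ and $\overline{p_k}$. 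Since the decomposition of each central character $f_{\mu_i}$ (and of the $f_{\nu,2,\dots,2}$) into the algebra generators is known, this reduces the problem to understanding the $\w$-brackets $\langle p_{k_1}\cdots\overline{p}_{j_1}\cdots\rangle_\w$ of monomials in the generators.

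The core of the argument is then to prove that each such $\w$-bracket of a product of $p_k$'s and $\overline{p_k}$'s is a quasi-modular form, and to track its weight. This is the quadratic analogue of the weight-$w(\mu)$ quasi-modularity established for the ordinary $q$-bracket in \cite{EO} \S5.1. I would expect the mechanism to be an operator-theoretic computation on the infinite wedge (fermionic Fock) space: one realizes $\langle\,\cdot\,\rangle_\w$ as a weighted trace over balanced partitions, rewrites the generators $p_k,\overline{p_k}$ as bosonic/fermionic operators, and evaluates the trace using the commutation relations together with the explicit $\w(\lambda)=\bigl(\dim\lambda/|\lambda|!\bigr)f_{2,\dots,2}(\lambda)$ weight. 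The presence of the weight $\w(\lambda)$ and the restriction to balanced partitions $B\Pi$ is precisely what produces the combination of Eisenstein series in the two variables $q^2$ and $q^4$: the factor $f_{2,\dots,2}$ is responsible for the appearance of $E_2(q^4)$ and $E_4(q^4)$, while the unweighted part contributes $E_2(q^2)$. The grading of $\overline\Lambda$ that assigns weight $k+1$ to $p_k$ and to $\overline{p_k}$ should match, under the bracket, the modular weight, so that $\langle\prod f_{\mu_i}\cdot f_{\nu,2,\dots,2}/f_{2,\dots,2}\rangle_\w$ lands in weight
\[
\sum_i(\mu_i+1)+|\nu|/2=|\mu|+l(\mu)+|\nu|/2=w(\mu,\nu).
\]

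The hardest step will be the quasi-modularity of the two-variable $\w$-brackets themselves, i.e. showing they are polynomials in $E_2(q^2),E_2(q^4),E_4(q^4)$ of the prescribed weight rather than merely quasi-modular forms on some congruence subgroup. The subtlety, flagged in the excerpt by the remark that the Abelian Theorem 5.5 and Theorem 6.7 have \emph{no} analogue here (cf.\ \cite{R-Z}), is that one cannot simply isolate a highest-weight term and read off an asymptotic; the entire polynomial structure has to be controlled. Concretely, the obstacle is to handle the $\overline{p_k}$ generators, whose definition involves the alternating signs $(-1)^{\lambda_i-i+1}$: these encode the passage from the torus to the pillow $\fB=\T^2/\pm$ and are what force a modular variable doubled to $q^4$. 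I would therefore devote the main effort to computing the weighted traces of the sign-twisted operators on the infinite wedge, establishing that the twist converts the single Eisenstein series of the Abelian theory into the level-structure generated by $E_2(q^2),E_2(q^4),E_4(q^4)$, and verifying the weight bookkeeping is additive across the product so that the total weight is exactly $w(\mu,\nu)$.
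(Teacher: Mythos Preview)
This theorem is not proved in the present paper at all: it is \emph{quoted} as Theorem~1 of \cite{EO2} and used as a black box for the volume computations. So there is no proof in the paper to compare your attempt against. What the paper does record is one intermediate step of the Eskin--Okounkov argument (stated here as Theorem~2 of \cite{EO2}): the ratio $g_\nu=f_{\nu,2,\dots,2}/f_{2,\dots,2}$ extends to an element of $\overline\Lambda$ of weight $|\nu|/2$. Your outline is broadly consistent with that: reduce via Proposition~\ref{prop:Z'}, express the integrand in $\overline\Lambda$, and then establish quasi-modularity of the $\w$-brackets of monomials in $p_k,\overline p_k$.

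One concrete correction: you assign weight $k+1$ to $\overline p_k$, but the grading on $\overline\Lambda$ used here gives $w(p_k)=k+1$ and $w(\overline p_k)=k$ (see the sentence following Theorem~2 in \S\ref{step1}). With the correct grading, $g_\nu$ has weight $|\nu|/2$ and each $f_{\mu_i}\in\Lambda^*$ has weight $\mu_i+1$, so the total weight is indeed $|\mu|+l(\mu)+|\nu|/2=w(\mu,\nu)$; your final bookkeeping is right, but the intermediate justification needs this fix. Beyond that, your sketch is honest about where the real work lies---the trace computation on the infinite wedge with the sign-twisted operators $\overline p_k$ and the $\w$-weight---and correctly identifies this as the step with no direct Abelian analogue. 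For the actual execution you would have to go to \cite{EO2}; the present paper does not supply it.
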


Examples of such generating functions are given in Appendix A of \cite{EO2}.

The procedure to compute volumes is then the following:
\begin{enumerate}
\item Compute the coefficients of the polynomial $Z'$ in $E_2(q^2)$, $E_2(q^4)$, $E_4(q^4)$ (see \S\ref{step1})
\item Deduce $Z^\circ$ from $Z'$ (see \S\ref{step2})
\item Compute the asymptotic development as $q$ goes to 1 of $Z^\circ$ (see \S\ref{step3})
\end{enumerate}

The first step constitutes the main part of the computations. We explain first how to make the last step, since it is the easiest.

Two additional steps are required to compare these volumes to the previous computed ones, they are described in \S \ref{step4} and \ref{step5}.

\subsection{Step 3: Computing the asymptotic development of $Z^\circ$}\label{step3}
After the second step (see \S\ref{step2}), we obtain $Z^\circ$ as a polynomial in $E_2(q^2)$, $E_2(q^4)$, $E_4(q^4)$. 

Let $q=e^{2i\pi\tau}$, $\tilde q=e^{_i\pi/2\tau}$, and $h=-2i\pi\tau$ so $q=e^{-h}$. We use the (quasi)-modular transformations:
\begin{eqnarray*}
E_2(q^2)=-\frac{\pi^2}{h^2}E_2({\tilde q}^2)-\frac{1}{4h}\\
E_2(q^4)=-\frac{\pi^2}{4h^2}E_2(\tilde q)-\frac{1}{8h}\\
E_4(q^4)=\frac{\pi^4}{16h^4}E_4(\tilde q)
\end{eqnarray*}
Finding the asymptotic development as $q\to 1$ is equivalent to finding the asymptotic development as $h\to 0$.

Recall that with the convention of \cite{EO2}, we have the following developments:
\begin{eqnarray*}
E_2(q)=-\frac{1}{24}+q+3q^2+4q^3+7q^2+6q^5+12 q^6 + \dots\\
E_4(q)=\frac{1}{240}+q+9q^2+28 q^3+73 q^4+126 q^5+252 q^6+\dots
\end{eqnarray*}

Note that, except for the constant terms, all terms in the development of $E_2(\tilde q^2)$, $E_2(\tilde q)$, and $E_4(\tilde q)$ are negligible compared to any power of $h$ as $h\to 0$.

It means that making the following replacements:
\begin{equation}\begin{cases}
E_2(q^2)\longleftrightarrow \cfrac{\pi^2}{24\cdot h^2}-\cfrac{1}{4h}\\
E_2(q^4)\longleftrightarrow\cfrac{\pi^2}{4\cdot 24\cdot h^2}-\cfrac{1}{8h}\\
E_4(q^4)\longleftrightarrow\cfrac{\pi^4}{16\cdot 240 \cdot h^4}
\end{cases}
\end{equation}\label{eq:mod}
we obtain exactly the asymptotic development of $Z^\circ$ as $h\to 0$.

\subsection{Step 2: From possibly disconnected covers to connected covers}\label{ssect:con}\label{step2} 
Define a {\it substratum} of a stratum $\cQ(\alpha)$ as a stratum which singularity orders belong to the set $\{\alpha_1, \dots \alpha_n\}$ of singularity orders of $\cQ(\alpha)$. Define a {\it decomposition of $\cQ(\alpha)$ into substrata} as an union of substrata of $\cQ(\alpha)$, such that the total set of singularity orders corresponds to $\{\alpha_1, \dots \alpha_n\}$. For example $\cQ(-1^4)\cup \cQ(4, 1^2, -1^2)$ is a decomposition of $\cQ(4, 1^2, -1^6)$ into substrata. For a fixed $\cQ(\alpha)$, the set of decompositions of $\cQ(\alpha)$ is naturally partially ordered.  
For example, here is the diagram of the poset of the decompositions of $\cQ(4, 1^2, -1^6)$ into substrata:


  \begin{displaymath}
    \xymatrix @!0 @R=0.4cm @C=2.5cm{
   & \cQ(4,1^2, -1^6) \ar@{-}[rddd] \ar@{-}[rrddd] \ar@{-}[lddd] \ar@{-}[dd]&& \\ &&&
   \\ &&&
   \\   
\cQ(4, -1^4) \,\cup
       &\cQ(-1^4) \,\cup &\tilde\cQ(4) \,\cup    &  \cQ(1, -1^5)\, \cup \\
 \cQ(1^2, -1^2) \ar@{-}[rddd] &  \cQ(4, 1^2, -1^2) \ar@{-}[ddd] &\cQ(1^2, -1^6)\ar@{-}[lddd] &\cQ(4,1,-1)
      \\ &&&
      \\ &&&
      \\
         & \tilde\cQ(4)\cup \cQ(1^2, -1^2)\cup \cQ(-1^4) & &
  }
\end{displaymath} 

Such a poset possess a well-defined M\"obius function $\mu$, defined as the inverse of the zeta function $\zeta(x,y)=1 \; \forall x\leq y$ (see Chapter 3 of \cite{St} for a reference on posets and M\"obius functions). From the decomposition formula
\[Z'(x)=\sum_{y\leq x}Z^\circ(y)\; \forall x\] we deduce in particular, using the M\"obius inversion: \[Z^\circ(\hat 1)=\sum_{y\leq \hat 1}Z'(y)\mu(y, \hat1),\] which is precisely the wanted formula. For the previous example we have the corresponding values of $\mu(y, \hat 1)$, for $y$ element of the poset:
  \begin{displaymath}
      \xymatrix{
   & 1 \ar@{-}[rd] \ar@{-}[rrd] \ar@{-}[ld] \ar@{-}[d]&& \\
-1      \ar@{-}[rd] &-1 \ar@{-}[d]&-1 \ar@{-}[ld] & -1 \\
         & 2 & &
  }\end{displaymath}

  For this example note that there is no difference with the Abelian case (\S 2.2 in \cite{EO}). But in general, since there are some symmetries in the decomposition into substrata, and since the even zeros are numbered, we need to modify the M\"obius function. We inverse  the more general formula
  \[Z'(x)=\sum_{y\leq x}Z^\circ(y)a(y,x)\; \forall x\]  where $a(x,x)\neq 0 \; \forall x$, using the inverse of the function $a$ that we denote $ \mu_a$ (which exists, Proposition 3.6.2 of \cite{St}). The function $a$ takes care of the possible symmetries and the numbering of the even zeros. As in the classical case the values of the M\"obius function $\mu_a(y, \hat 1)$ can be computed recursively using the relation \[(\mu_a a)(x,y)=\sum_{x\leq z\leq y}\mu_a(x,z)a(z,y)=\delta(x,y).\] 
  
  Instead of writing a complicated general formula for the function $a$ we prefer to explicit this function for three representative examples. 
  
\subsubsection{Symmetries: $\cQ(3^2, -1^{10})$ and $\cQ(8, -1^{12})$}
The symmetries in the decomposition into substrata occur when some substrata are equal. For example the stratum $\cQ(3^2, -1^6)$ decomposes into $\cQ(3, -1^3)\cup\cQ(3, -1^3)$, so the decomposition into connected components is:
\[Z'(\emptyset, [5^2, 1^6])=Z^\circ(\emptyset, [5^2, 1^6])+Z^\circ(\emptyset, [5^2, 1^2])Z^\circ(\emptyset, [1^4])+\frac{1}{2}Z^\circ(\emptyset,[5,1^3])^2\]
More generally we have to divide by the cardinality of the symmetric group that permutes the equal components in the counting, in order not to count the same surface twice.
For the stratum $\cQ(3^2, -1^{10})$ we have the following decomposition:

 \begin{displaymath}
      \xymatrix @!0 @R=0.4cm @C=2.5cm{
   & A=\cQ(3^2,-1^{10}) \ar@{-}[rddd]  \ar@{-}[lddd] && \\
   &&&\\
   &&&\\
    B= \cQ(3, -1^3)\cup\cQ(3, -1^7)
      \ar@{-}[rddd] &&C=\cQ(-1^4)\cup \cQ(3^2, -1^6) \ar@{-}[lddd] \ar@{-}[rddd]& \\
      &&&\\
      &&&\\
         & D=\cQ(3, -1^3)\cup\cQ(3, -1^3) & &E=\cQ(3^2, -1^2)\cup\\
     & \cup\,\cQ(-1^4)   &&\cQ(-1^4)\cup\cQ(-1^4)
  }\end{displaymath}
We give below the table for the function $a$ and the corresponding values of $\mu_a(y,\hat 1)$. 
  \begin{displaymath}\begin{array}{m{5cm}m{5cm}}
         $ \begin{array}{c|ccccc}
        a(y,x)& A&B&C&D&E\\
        \hline
        A&1&&&&\\
        B&1&1&&&\\
        C&1&  & 1 &&\\
        D& 1/2 & 1 & 1/2 & 1&\\
        E&1/2 & & 1 & & 1
        \end{array}$
&
        \xymatrix @R=0.5cm @C=0.5cm{
   & 1 \ar@{-}[rd]  \ar@{-}[ld] && \\
    -1  \ar@{-}[rd] &&-1 \ar@{-}[ld] \ar@{-}[rd]& \\
         &  1& &\cfrac{1}{2} 
  }\end{array}\end{displaymath}
This diagram gives the following inclusion-exclusion formula:
\begin{align*}Z^\circ(\emptyset, [5^2, 1^{10}])=&Z'(\emptyset, [5^2, 1^{10}])-Z'(\emptyset, [5,1^3])Z'(\emptyset, [5, 1^7])\\&-Z'(\emptyset, [1^4])Z'(\emptyset, [5^2, 1^6])+Z'(\emptyset, [5,1^3])^2Z'(\emptyset, [1^4])\\&+\frac{1}{2}Z(\emptyset,[5^2,1^2])Z'(\emptyset, [1^4])
\end{align*}
For the stratum $\cQ(8, -1^{12})$, the group $\mathfrak S_3$ acts on the three substrata $\cQ(-1^4)$, which gives a factor $1/6$.
  
        \begin{displaymath}
        \xymatrix  @R=0.5cm {
    A=\cQ(8, -1^{12}) \ar@{-}[d]  \\
        B=\cQ(8, -1^8)\cup\cQ(-1^4)\ar@{-}[d]  \\
        C=\cQ(8, -1^4)\cup\cQ(-1^4)\cup\cQ(-1^4)\ar@{-}[d]  \\
        D=\cQ(8, -1^4)\cup\cQ(-1^4)\cup\cQ(-1^4)\cup\cQ(-1^4)}
        \end{displaymath}
\begin{displaymath}\begin{array}{m{4.5cm}m{1.5cm}}
        $\begin{array}{c|cccc}
        a(y,x)& A&B&C&D\\
        \hline
        A&1&&&\\
        B&1&1&&\\
        C&1/2 & 1 & 1 &\\
        D& 1/6 & 1/2 & 1 & 1
        \end{array}$
&        
 \xymatrix @R=0.5cm{
    1 \ar@{-}[d]  \\
        -1\ar@{-}[d]  \\
        1/2\ar@{-}[d]  \\
        -1/6}
        \end{array}
  \end{displaymath}

\subsubsection{Numbering of the even zeros: $\cQ(2^3, -1^2)$}  
Noting that the even zeros are numbered by definition, because they arise as branching points over numbered distinct ramification points on the base, we see that there are three ways to decompose the stratum $\cQ(2^3, -1^2)$ into $\cQ(2^2)\cup\cQ(2, -1^2)$. So we obtain the following functions $a$ and $\mu_a$ for this stratum:
    \begin{displaymath}\begin{array}{m{4cm}m{3cm}m{3cm}}
        \xymatrix{
    A=\cQ(2^3, -1^2) \ar@{-}[d]  \\
        B=\cQ(2^2)\cup\cQ(2, -1^2)}
&
        $\begin{array}{c|cc}
        a(y,x)& A&B\\
        \hline
        A&1&\\
        B&3&1\\
        \end{array}$
&
 \begin{xy}
(20,0)*+{1}="A";
(12,-15)*+{-1}="B";
(20,-15)*+{-1}="C";
(28,-15)*+{-1}="D";
(20,-21)*+{-3};
{\ar@{-} "A";"B"};
{\ar@{-} "A";"C"};
{\ar@{-} "A";"D"};
(10,-7.5)*\ellipse(12,3.5){-};
\end{xy}\end{array}
  \end{displaymath}

\subsection{Step 1: Computing the generating function as a polynomial in quasi-modular forms}\label{step1}

\subsubsection{First method}
The first method consists to apply naively Proposition \ref{prop:Z'} and compute the first terms in the development. We denote $QMF_2(\Gamma_0(2))$ the algebra of quasi-modular forms generated by $E_2(q^2)$, $E_2(q^4)$, $E_4(q^4)$, $QMF_2(\Gamma_0(2))_w$ its $\Q$-subspace of weight $w$ (i.e. generated by monomials of weight smaller or equal to $w$), and $l^{QMF_2}_w$ the dimension of $QMF_2(\Gamma_0(2))_w$ as a $\Q$-vector space. Then it suffices to compute strictly more than $l^{QMF_2}_w$ terms in the development of $Z'$ to find the linear dependance between $Z'$ and the elements of $QMF_2(\Gamma_0(2))_w$ as a $\Q$-vector space. Since the developments are in powers of $q^2$, that means that we have to compute all values of the central characters for balanced partitions up to  $2(l^{QMF_2}_w+1)$. 
This method is very limited because this number grows fast and the character computations become too slow (see Table \ref{tab:l} and \ref{tab:len}).

\begin{table}[h!]
\renewcommand{\arraystretch}{1.5}
\begin{tabular}{|c|ccccc|}
\hline
$w$ & 2& 4& 6& 8& 10\\
\hline
$l^{QMF_2}_w$ &3&7&13&22&34 \\
\hline
$l^{\overline{\Lambda}}_w$ &5&20&65&185&481 \\
\hline
$l^{\Lambda^*}_w$ &2&5&11&22&42 \\
\hline
\end{tabular}\vspace{0.3cm}
\caption{Table of dimensions of $QMF_2(\Gamma_0(2)), \overline\Lambda, \Lambda^*$ as $\Q$-vector-spaces}
\label{tab:l}
\end{table}

\subsubsection{Second method}
The second method consists to apply the intermediary result of \cite{EO2} in the proof of the quasi-modularity of $Z'$ (Theorem 2 of \cite{EO2}):
\begin{theo}[Eskin--Okounkov]
The ratio $g_{\nu}(\lambda)=
\cfrac{f_{\nu, 2, \dots, 2}}{f_{2, 2,\dots, 2}}$ is the restriction of a unique function $g_{\nu}\in\overline{\Lambda}$ of weight $|\nu|/2$ to the set of balanced partitions.\end{theo}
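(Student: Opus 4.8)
The plan is to realise both central characters inside the semi-infinite wedge (fermionic Fock space), where the algebra $\overline{\Lambda}$ has a transparent description, and then to reduce the ratio to a statement about the $2$-quotient of $\lambda$. Recall that $\lambda$ is encoded by its modified Frobenius coordinates $S(\lambda)=\{\lambda_i-i+\tfrac12\}\subset\Z+\tfrac12$, and that, up to the $\zeta$-regularisation built into the definitions, $p_k(\lambda)$ is the eigenvalue of the diagonal operator with symbol $j\mapsto j^k$ while $\overline{p_k}(\lambda)$ is the eigenvalue of the same operator twisted by the sign $(-1)^{\,j-1/2}$. Thus $\overline{\Lambda}$ is the polynomial algebra generated by the regularised linear statistics $\lambda\mapsto\sum_{j\in S(\lambda)}\phi(j)$ whose symbol $\phi$ is an arbitrary polynomial on each of the two residue classes of $j$ modulo $2$. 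First I would fix the grading $\mathrm{wt}(p_k)=k$ and $\mathrm{wt}(\overline{p_k})=k+\tfrac12$; since $|\nu|$ is even the target weight $|\nu|/2$ is an integer, even though the generators that will produce it carry half-integer weight.

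Next I would pass to the $2$-quotient. A partition is balanced, i.e. $\overline{p}_0(\lambda)=\tfrac12$, exactly when its $2$-core is empty, equivalently when $\lambda$ is domino-tileable; then the $2$-quotient $(\lambda^{0},\lambda^{1})$ is defined, with $|\lambda^{0}|+|\lambda^{1}|=|\lambda|/2$. Splitting $S(\lambda)$ according to the parity of $j-\tfrac12$ and rescaling identifies the two halves with the coordinate sets of $\lambda^{0}$ and $\lambda^{1}$; under this dictionary $p_k$ restricts to (a multiple of) the \emph{sum}, and $\overline{p_k}$ to the \emph{difference}, of the shifted power sums of the two components. As $(\lambda^{0},\lambda^{1})$ ranges over all pairs of partitions these two families vary independently, so $p_k|_{B\Pi}$ and $\overline{p_k}|_{B\Pi}$ are algebraically independent; this gives the injectivity of the restriction map $\overline{\Lambda}\to\mathrm{Fun}(B\Pi)$ and hence the asserted \emph{uniqueness} of the extension.

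For existence I would compute both characters by the Murnaghan--Nakayama rule. The fixed-point-free involution character $f_{2,\dots,2}(\lambda)$ is obtained by stripping $|\lambda|/2$ dominoes and is governed entirely by $(\lambda^{0},\lambda^{1})$ through Littlewood's formula; the decorated character $f_{\nu,2,\dots,2}(\lambda)$ is obtained by first removing the odd ribbons $\nu_1,\dots,\nu_{l(\nu)}$ and then the remaining dominoes. After dividing by $f_{2,\dots,2}$ the domino background cancels, and $g_\nu(\lambda)$ depends on $\lambda$ only through the positions at which the odd ribbons are removed from $\lambda^{0}\sqcup\lambda^{1}$. Since each $\nu_i$ is odd such a ribbon joins the two components — its ends lie on cells of opposite colour in the checkerboard colouring — so its removal is recorded asymmetrically between $\lambda^{0}$ and $\lambda^{1}$, that is, by the signed functions $\overline{p_k}$. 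Assembling the contributions expresses $g_\nu$ as a polynomial in the $p_k$ and $\overline{p_k}$, visibly an element of $\overline{\Lambda}$, whose top-degree part is a constant times $\prod_i\overline{p}_{(\nu_i-1)/2}$; the half-integer weights $\nu_i/2$ sum to $|\nu|/2$, giving the stated weight.

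The main obstacle is precisely this factorisation. A single Murnaghan--Nakayama removal of an odd ribbon is genuinely \emph{off-diagonal} (it mixes neighbouring partitions), and the content is to show that dividing by $f_{2,\dots,2}$ collapses the $\nu$-insertion to a \emph{diagonal} operator on the span of balanced partitions, while keeping exact control of the signs coming from the heights of the removed ribbons so that the answer lands in $\overline{\Lambda}$ — one really needs the $\overline{p_k}$ and not merely the $p_k$. Concretely this is the combinatorics of distributing the odd-ribbon removals over the two quotient components with their correct signs, or equivalently a commutation-relation computation for the corresponding vertex operators $\mathcal E$ in the semi-infinite wedge; once it is in place, the weight bookkeeping and the leading-term identification are routine.
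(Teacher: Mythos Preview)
This theorem is \emph{not} proved in the paper; it is quoted as Theorem~2 of \cite{EO2} (Eskin--Okounkov, ``Pillowcases and quasimodular forms'') and used as a black box in the algorithm of \S\ref{step1}. There is therefore no proof in the present paper to compare your proposal against.

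That said, your outline is broadly the strategy of \cite{EO2} --- passage to the $2$-quotient, identification of $\overline{\Lambda}$ with the algebra of shifted-symmetric functions on pairs $(\lambda^0,\lambda^1)$ via the sum/difference of the two families of power sums, and a vertex-operator / Murnaghan--Nakayama argument for the ribbon insertions --- so at the level of architecture you are on the right track. Two concrete issues, however, deserve attention. First, your grading convention is inconsistent with the one used here (and in \cite{EO2}): the paper explicitly records $w(p_k)=k+1$ and $w(\overline{p}_k)=k$, both integers, whereas you set $\mathrm{wt}(p_k)=k$ and $\mathrm{wt}(\overline{p}_k)=k+\tfrac12$. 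Your half-integer bookkeeping happens to make your leading-term weight come out to $|\nu|/2$, but it does not match the actual convention; under the correct weights your proposed leading term $\prod_i\overline{p}_{(\nu_i-1)/2}$ would have weight $(|\nu|-l(\nu))/2$, not $|\nu|/2$, so either the leading term or the grading is misidentified. Second, your third paragraph is honest that the ``main obstacle'' --- showing that division by $f_{2,\dots,2}$ turns the off-diagonal ribbon removals into a diagonal operator on balanced partitions with controlled sign --- is left as a promissory note. This is exactly the substance of the argument in \cite{EO2}, and without it the proposal is a plan rather than a proof.
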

In other words it means that the ratios $\cfrac{f_{\nu, 2, \dots, 2}}{f_{2, 2,\dots, 2}}$ are polynomials in $p_k$ and $\overline{p}_k$ of degree $|\nu|/2$, where the degree of a monomial is obtained by summing the weights $w(p_k)=k+1$ and $w(\overline{p}_k)=k$. The computation is then reduced to the computation of the $\w$-brackets of monomials in $p_k$ and $\overline p_k$ as polynomials in $E_2(q^2)$, $E_2(q^4)$, $E_4(q^4)$. 
To resume, the method here consists to:
\begin{enumerate}
\item Compute the coefficients of the polynomial $g_{\nu}$ in the $p_k$ and $\overline{p}_k$
\item Compute the coefficients of the polynomials $f_{\mu_i}$ in the $p_k$
\item Compute the coefficients of all polynomials $\langle p_{i_1}\dots p_{i_r}\overline p_{j_1}\dots \overline p_{j_s}\rangle_\w$ in $E_2(q^2)$, $E_2(q^4)$, $E_4(q^4)$.
\end{enumerate}

Note that for the first step we have to compute the values of $g_{\nu}$ on a priori at least $l^{\overline{\Lambda}}_{|\nu|/2}+1$ distinct balanced partitions, of length at least $|\nu|$, where $l^{\overline{\Lambda}}_w$ denotes the dimension of the subspace $\overline\Lambda_w$ of $\overline\Lambda$ composed by polynomials in $p_k$ and $\overline p_k$ of weight smaller than $w$ (see Table \ref{tab:l}). But another constraint appears here. Remark that \[p_1(\lambda)=|\lambda|-\cfrac{1}{24},\] and that $\overline\Lambda_{2k}$ contains the monomials $p_1^{i_1}\dots p_1^{i_s}$ with $i_1+\dots +i_s\leq k$. Each monomial $p_1^i(\lambda)$ is a polynomial in $|\lambda|$ of degree $i$. It implies that these monomials are linearly dependent  on small sets of partitions. So we have to compute the values of $g_{\nu}$ on partitions with at least $|\nu|/4+1$ distinct lengths. Note that the number of balanced partitions of length comprised between $|\nu|$ and $|\nu|+|\nu|/4$ is bigger than  $l^{\overline{\Lambda}}_w$, so it suffices to compute $g_{\nu}$ on all these partitions to obtain its coefficients. The relation on the $p_1$ is the only constraint for the choice of the set of partitions that appeared in the numerical simulations. The Table \ref{tab:len} compares the lengths of the partitions involved in the two methods, so it becomes clear that the second method is more efficient.

For the second step we can use explicit formulas given in \cite{Ls} for example, so this step presents no difficulties. Note that his conventions differs from the ones of \cite{EO2}.

Finally for the third step we have to compute $\w$-brackets of monomials in $p_k$ and $\overline p_k$, and express them in term of polynomials in $E_2(q^2)$, $E_2(q^4)$, $E_4(q^4)$. This can be done be computing the first terms of these brackets in the development in powers of $q$. Note that this time, we do not have to compute all characters, but only the dimension, and the central characters of fixed-point free involutions $f_{2, 2, \dots, 2}$, which are given by explicit formulas (equation (8) in \cite{EO2}). We noticed numerically that up to weight 12, all these $\w$-brackets are of pure weight. 

Computations giving the values of Appendix \ref{tab} were made using Pari. 

 Note that for weight 10 we used the fact that numerically the $\w$-brackets of polynomials in $p_i$, $\overline{p}_i$ are of pure weight. 

\begin{table}[h!]
\renewcommand{\arraystretch}{1.5}
\begin{tabular}{|c|ccccc|}
\hline
$w$ & 2& 4& 6& 8& 10\\
\hline
Method 1 &2-8&2-16&2-28&2-46&2-70 \\
\hline
Method 2 &4-6&8-12&12-18&16-24&20-30 \\
\hline
\end{tabular}\vspace{0.3cm}
\caption{Table of lengths of balanced partitions whose characters are computed in the two methods, for a stratum with only odd zeros ($w(\mu, \nu)=|\nu|/2$)}
\label{tab:len}
\end{table}

\subsection{Step 4: Getting the purely quadratic contribution in the case of strata with even zeros}\label{step4}
Note that for strata with only even zeros, we count covers that possibly correspond to Abelian differentials. Let $Z^\circ_{quad}(\mu, \emptyset; q)$ be the generating function for connected covers that correspond to purely quadratic differentials.
\begin{prop}
\[Z^\circ_{quad}(\mu, \emptyset; q)=Z^\circ(\mu, \emptyset; q)-2^{l(\mu)-1}Z^\circ_{ab}(\mu; q^2), \]  where $Z^\circ_{ab}(\mu; q)$ is the generating function corresponding to the stratum $\cH(\beta)$. 
\end{prop}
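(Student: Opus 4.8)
The plan is to isolate, among the connected $2d$-fold covers of the pillow counted by $Z^\circ(\mu,\emptyset;q)$, precisely those whose quadratic differential is a global square, and to check that these are enumerated by $2^{l(\mu)-1}Z^\circ_{ab}(\mu;q^2)$. First I would characterise this \emph{square locus} geometrically. Write $p\colon S\to\fB$ for the covering of a lattice point $S\in L_{quad}(\cQt(\alpha))$, let $\pi\colon\T^2\to\fB$ be the canonical double cover (so that $\pi^*q_\fB=(\dd z)^2$, where $q_\fB$ is the pillow differential descended from $(\dd z)^2$), and let $\tau\colon z\mapsto -z$ be its deck involution. The orientation double cover $\hat S\to S$ attached to $q$ is exactly the fibre product $S\times_\fB\T^2$; hence $q$ is a global square, i.e. $\hat S=S\sqcup S$, if and only if $p$ lifts through $\pi$ to a map $S\to\T^2$, and in that case there are exactly two lifts, interchanged by $\tau$ and carrying the differentials $\omega$ and $-\omega$.

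Next I would verify that the ramification data make each lift an honest torus cover in $L_{ab}(\cH(\beta))$ with $\beta=\alpha/2$. The prescribed profile $(2^d)$ over all four corners of $\fB$ forces the lift $f\colon S\to\T^2$ to be unramified over the four $2$-torsion points (the branch locus of $\pi$), so its degree is $d$; over a marked point $P_i$, with $\pi$-preimages $\{P_i^{\T^2},-P_i^{\T^2}\}$, the profile $(\mu_i,1^{2d-\mu_i})$ splits so that the single part $\mu_i$ lies over exactly one preimage $X_i\in\{P_i^{\T^2},-P_i^{\T^2}\}$ and $(1^d)$ over the other. A lift is therefore a degree-$d$ cover of $\T^2$ with a zero of order $\mu_i-1=\beta_i$ over $X_i$, i.e. a point of $\cH(\beta)$, and by translation invariance of the count the number of connected such covers with ramification over any fixed tuple $(X_1,\dots,X_{l(\mu)})$ equals $\Cov^\circ_d(\mu)$.

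The constant $2^{l(\mu)-1}$ then emerges from the counting. Ranging over the $2^{l(\mu)}$ choices of which preimage carries the ramification produces $2^{l(\mu)}\Cov^\circ_d(\mu)$ connected torus covers, pairwise distinct since their branch loci differ. The involution $\tau$ acts on this set by $f\mapsto\tau\circ f$, sending the tuple $(X_i)$ to $(\tau X_i)$; each $\tau$-orbit is precisely the set of lifts of a single connected square cover of $\fB$, and the assignment $f\mapsto\pi\circ f$ identifies $\tau$-orbits with connected degree-$2d$ square pillow covers bijectively. Since the groupoid cardinality of a quotient by the $\Z/2$-action of $\tau$ is half the original, the weighted number of connected square pillow covers is $\frac{1}{2}\,2^{l(\mu)}\Cov^\circ_d(\mu)=2^{l(\mu)-1}\Cov^\circ_d(\mu)$, which is the coefficient of $q^{2d}$ in $2^{l(\mu)-1}Z^\circ_{ab}(\mu;q^2)$. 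Subtracting this from the total $Z^\circ(\mu,\emptyset;q)$ leaves exactly the connected covers with non-square differential, giving the asserted identity.

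The point demanding the most care is the compatibility with the automorphism weighting inherent in these cover counts. Because $\Cov^\circ$ tallies covers with weight $1/|\mathrm{Aut}|$, I must read the two-to-one pairing by $\tau$ and the passage $f\mapsto\pi\circ f$ as an equivalence of groupoids rather than of sets: an automorphism $\psi$ of $S$ commuting with $p=\pi\circ f$ satisfies $f\circ\psi\in\{f,\tau\circ f\}$, so it either descends to an automorphism of the chosen torus cover $f$ or realises an isomorphism $f\cong\tau f$, and only the groupoid quotient formula $|\mathcal G/(\Z/2)|=\frac{1}{2}|\mathcal G|$ delivers the clean factor $2^{l(\mu)-1}$ uniformly across covers with and without such extra symmetry.
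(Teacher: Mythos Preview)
Your argument is correct and follows essentially the same approach as the paper: a connected pillow cover carrying a global square factors through $\T^2\to\fB$, the $l(\mu)$ branch points each lift to one of two preimages, and the deck involution $\tau$ identifies the resulting $2^{l(\mu)}$ torus covers in pairs. Your treatment is more thorough than the paper's---in particular the fibre-product characterisation of the square locus and the groupoid discussion of automorphism weights are details the paper leaves implicit---but the underlying idea is identical.
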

\begin{proof}Any connected cover of degree $2d$ of the pillow that corresponds to the square of an Abelian differential has the form
\[\pi: S\overset{\pi'}{\rightarrow}\T^2\overset{\sigma}{\rightarrow} \T^2/\pm=\fB.\]
Let $z_1, \dots, z_{l(\mu)}$ be the ramification points in $\fB$, and $z_1', \dots z_{l(\mu)}', , z_1'',\dots,  z_{l(\mu)}''$ their lift to the torus by $\sigma$. Then $\pi'$ is a ramified cover of degree $d$ of $\T^2$, ramified over ${l(\mu)}$ points $x_1, \dots x_{l(\mu)}$ with profile $\mu_i$ over $x_i$, where $x_i$ is either $z_i'$ or $z_i''$.
There are $\frac{1}{2}2^{l(\mu)}$ choices for such a $\pi'$, corresponding to the choice of the $x_i$'s: the factor $\frac{1}{2}$ is due to the symmetry induced by the double cover involution, which exchanges $(z_1', \dots , z_{l(\mu)}')$ and $(z_1'', \dots, z_{l(\mu)}'')$.
\end{proof}

\subsection{Step 5: Normalization factor between \cite{AEZ}-convention and \cite{EO2}-convention on volumes }\label{step5}

In genus 0, \cite{AEZ}-lattice points are represented by square-tiled pillowcases surfaces of equivalently by chess-colored surfaces (Lemma B.1.  in \cite{AEZ}). This is a direct consequence of the fact that in genus 0 all loops have trivial homology classes. Note that this is not anymore the case for higher genera surfaces.

\begin{lem}\label{lem:rat}We have the following normalization factor between the volumes:
\[\Vol^{AEZ}(\cQ(\alpha))=\frac{4^{\dim_\C}}{2^{l(\mu)}}\cdot \prod m_i!\cdot \Vol^{EO}(\cQ(\alpha)),\] where the $m_i$'s are the multiplicities of the odd zeros in $\alpha$, and $l(\mu)$ is the number of the even zeros.
\end{lem}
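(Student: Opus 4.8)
The plan is to use that each convention is merely a choice of lattice in the period coordinate space $H^-_1(\hat S,\hat\Sigma;\C)\cong\C^{\dim_\C}$, so that the two Masur--Veech measures differ by a single constant once the labelling and symmetry conventions behind $\Vol^{AEZ}$ and $\Vol^{EO}$ are reconciled; that constant is the inverse ratio of the covolumes of the \cite{AEZ}-lattice $(H^-_1(\hat S,\hat\Sigma;\Z))^{*}_{\C}$ and the \cite{EO2}-lattice $L_{quad}(\cQt(\alpha))$. Equivalently, and more concretely, both volumes are asymptotic lattice-point counts: $\Vol^{AEZ}$ through \eqref{eq:volint}, and $\Vol^{EO}$ through the growth of $\Cov^{\circ}_{2d}(\mu,\nu)$ as the degree grows. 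Since a degree $2d$ pillowcase cover and an \cite{AEZ}-integer surface are the same type of flat object, the whole statement reduces to a weighted dictionary between these two families, and the ratio factors as the product of three independent contributions: a metric factor from the lattice rescaling, and two factors reflecting the treatment of odd and even zeros.

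First I would pin down the metric factor $4^{\dim_\C}$. The \cite{AEZ}-integrality condition (Convention \ref{convreseau}) requires the periods of $\omega$ on $H^-_1(\hat S,\hat\Sigma;\Z)$ to lie in $\Z\oplus i\Z$, which by the doubling relation $\int_{\hat\gamma}\omega=2\int_\gamma\sqrt q$ (the same mechanism behind $\dd\hat a=4\dd a$ in \S\ref{ssect:hyp}) amounts to half-integer holonomy of $\sqrt q$ on $S$; a degree $2d$ pillowcase cover, on the other hand, has integer holonomy of $\sqrt q$ in the unit-square tiling of $\fB$. Hence in each of the $\dim_\C$ complex period coordinates the \cite{AEZ}-lattice is the \cite{EO2}-lattice rescaled by $1/2$, each complex coordinate contributing $2^2=4$ to the area form, so the covolume ratio is $4^{\dim_\C}$ and $\Vol^{AEZ}$ is the larger, as expected for the finer lattice. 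I would cross-check this against genus $0$, where \cite{AEZ}, Lemma B.1 identifies integer points with square-tiled pillowcases and thus fixes the constant.

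Next come the two factors that encode how the conventions treat the singularities. The odd zeros all ramify over the single corner $0$ of the pillow with profile $(\nu,2^{d-|\nu|/2})$, so the \cite{EO2}-count does not distinguish odd zeros of equal order, whereas Convention \ref{conv:label} labels them; converting from unlabelled to labelled multiplies the count by $\prod m_i!$ over the odd orders, exactly as in \eqref{eq:label}. The even zeros are already labelled in both conventions, since they arise as the numbered ramification points $P_i$ on $\fB$; the remaining discrepancy is the factor $\tfrac{1}{2^{l(\mu)}}$, which I would extract from the orientation double cover. Each even zero is unramified for $\hat S\to S$ and hence has two preimages on $\hat S$ exchanged by the involution $\sigma$, producing for each of the $l(\mu)$ even zeros a two-fold redundancy between the pillowcase description and the antiinvariant period lattice; this is precisely the lift-choice mechanism already used in the proof of \S\ref{step4}, where selecting $z_i'$ versus $z_i''$ contributed a factor $2^{l(\mu)}$.

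The hard part will be the rigorous bookkeeping of the two double-cover factors, rather than the plain counting. In higher genus the genus $0$ chess-colouring argument of \cite{AEZ} no longer applies verbatim, since cylinders may close up into loops and the homology is nontrivial, so establishing the clean uniform rescaling by $1/2$ at the level of the full lattice (and not merely on a single basis vector, as in \S\ref{ssect:hyp}) requires choosing a basis of $H^-_1(\hat S,\hat\Sigma;\Z)$ adapted to the pillowcase tiling along the lines of Lemma \ref{lem:basis}, and verifying the scaling on each basis element. Equally delicate is isolating the even-zero factor $\tfrac{1}{2^{l(\mu)}}$ exactly: one must track how $\sigma$ permutes the preimages of the even zeros, how this interacts with the antiinvariance defining $H^-_1$ and with the global $\pm$ structure of the pillow, and confirm that no compensating factor hides in the odd zeros, which, being ramification points of $\hat S\to S$, have a single preimage and so contribute none. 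Once these two computations are in place, multiplying the metric factor $4^{\dim_\C}$, the odd-zero labelling factor $\prod m_i!$, and the even-zero factor $\tfrac{1}{2^{l(\mu)}}$ yields the claimed normalization.
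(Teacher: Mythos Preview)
Your overall architecture matches the paper's proof exactly: decompose the ratio into a metric factor $4^{\dim_\C}$, an odd-zero labelling factor $\prod m_i!$, and an even-zero factor $2^{-l(\mu)}$, and treat the case of no even zeros first. Your explanations of the first two factors are essentially the paper's.

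The gap is in your account of the factor $2^{-l(\mu)}$. You locate it in the orientation double cover $\hat S\to S$, arguing that each even zero has two preimages on $\hat S$ and that this produces a ``two-fold redundancy between the pillowcase description and the antiinvariant period lattice''. That is not where the factor lives. The two preimages of an even zero are already absorbed into the very definition of $H_1^-(\hat S,\hat\Sigma;\Z)$ and of the basis in Lemma~\ref{lem:basis}; they do not create any residual multiplicity in the lattice comparison. Your appeal to \S\ref{step4} is a misidentification: there the points $z_i',z_i''$ are the two lifts of $z_i\in\fB$ to the \emph{target} torus $\T^2$, not preimages on $\hat S$; and note that the factor there is $2^{l(\mu)-1}$, not $2^{l(\mu)}$, precisely because of the global $\pm$ on $\T^2\to\fB$.

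The paper's mechanism is different and more concrete. In the Eskin--Okounkov convention the even zeros project to \emph{generic} points $z_1,\dots,z_{l(\mu)}$ of the pillow, not to the corner $0$. One first compares the AEZ lattice to the sublattice of square-tiled pillowcase covers (all singularities over $0$), obtaining $4^{\dim_\C}$ exactly as in the odd-only case. Then one compares those to genuine EO lattice points: the holonomy of a relative cycle from an odd zero to the $i$-th even zero lies in $\Z\oplus i\Z\pm z_i$, so for each fixed tuple $z$ there are $2^{l(\mu)}$ times as many pillowcase covers ramified over $z$ as square-tiled pillowcase covers. The sign ambiguity is that of the pillow $\fB=\T^2/\pm$, not of $\hat S\to S$. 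If you rewrite your third paragraph to extract $2^{l(\mu)}$ from this $\pm z_i$ displacement (holonomy) argument rather than from preimages on $\hat S$, the rest of your plan goes through unchanged.
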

\begin{proof}
The factor $\prod m_i!$ corresponds to the labeling of the odd singularities (the even ones are labeled in the \cite{EO}-convention because they correspond to branching points over distinct points $z_1, \dots z_{l(\mu)}$ on $\fB$).

Assume first that there are no even zeros. Take a surface that corresponds to a \cite{EO}-lattice point, that is, a pillowcase cover. By convention since there are no even zeros all zeros project to the same corner of the pillow, the other three corners lift as regular points. That means that the surface is in fact tiled by squares of size $1\times 1$ (so twice larger as the pillow), so it is a {\it square-tiled pillowcase cover} (see \cite{AEZ}). On such a surface all relative cycles have holonomy in $\Z+i\Z$. In particular our preferred basis in $H^1_-(\hat S, \hat\Sigma; \C)$ have holonomy in $2\Z+2i\Z$. In other words the image of the \cite{EO}-lattice under the period map is $(2\Z+2i\Z)^{\dim_\C}$. By definition the image of the \cite{AEZ}-lattice is $(\Z+i\Z)^{\dim_\C}$ so it is clear that in this case the covolumes of the lattices are related by a factor $4^{\dim_\C}$:
\[\Covol(L^{EO})=4^{\dim_\C}\Covol(L^{AEZ}). \]

Now if there are some even zeros, let $z=(z_1, \dots, z_{l(\mu)})$ be a ${l(\mu)}$-tuple of points in $[0,1]\times[0,1/2]$. For such a $z$ we want to compare the pillowcases covers with these points as ramification points corresponding to even zeros (profile $(\mu_i,1, \dots, 1)$ over $z_i$), to the square-tiled pillowcases covers,  with profile $(\nu,2\mu, 2\dots, 2)$ over 0. The later surfaces form as previously a lattice of covolume $4^{\dim_\C}\Covol(L^{AEZ})$ in $H^1_-(\hat S, \hat \Sigma; \C)$. For the first surfaces, the holonomy of a relative cycle from an odd zero to the i-th even zero is in $\Z+i\Z\pm z_i$. It means that, for a fixed $z$, there are $2^{l(\mu)}$ more pillowcases covers with ramification over $z$ than square-tiled pillowcases covers. So we get \[\frac{1}{2^{l(\mu)}}\Vol^{EO}=\Vol^{sqp}=\frac{1}{4^{\dim_\C}}\Vol^{AEZ}.\]
\end{proof}  
Using this normalization factor, we give all volumes of strata of dimension up to 10 in the appendix \ref{tab}. Note that these values coincide with the ones computed in the previous sections.

\subsection{Conclusion}
The rationality of volumes follows from all the results of \cite{EO2}, we detail here the proof since it follows from all the detailed steps of the computation of volumes.

\begin{prop}Any stratum $\cQ(\alpha)$ of quadratic differentials has a rational Masur--Veech volume in the following sense:
\[\exists r\in\Q, \; \Vol \cQ_1(\alpha)=r\cdot \pi^{2 g_{\eff}}\]
\end{prop}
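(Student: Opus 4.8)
The plan is to assemble the ingredients of Steps 1--5 and to read off a single power of $\pi$ from the leading asymptotics of $Z^\circ$ as $q\to 1$. First I would record that, by the Eskin--Okounkov quasi-modularity theorem (Theorem of \cite{EO2}), $Z'(\mu,\nu;q)$ is a polynomial in $E_2(q^2)$, $E_2(q^4)$, $E_4(q^4)$ with rational coefficients, homogeneous of weight $w(\mu,\nu)=|\mu|+l(\mu)+|\nu|/2$. The next point is that passing to connected covers through the generalized M\"obius inversion of \S\ref{step2} preserves both rationality and homogeneity: the values $\mu_a(y,\hat 1)$ are rational, and the weight is \emph{additive} under decomposition into substrata (since the singularities are partitioned among the components, each of $|\mu|$, $l(\mu)$, $|\nu|$ adds up, hence so does $w$). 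Therefore every product of the form $\prod Z'(\mu^{(i)},\nu^{(i)})$ appearing in the inversion has total weight $w(\mu,\nu)$, and $Z^\circ(\mu,\nu;q)$ is a $\Q$-linear combination of monomials $E_2(q^2)^aE_2(q^4)^bE_4(q^4)^c$ with $2a+2b+4c=w(\mu,\nu)$. When $\cQt(\alpha)\neq\cQ(\alpha)$ I would first extract $Z^\circ_{quad}$ via Step 4 (\S\ref{step4}), noting that the correction term $2^{l(\mu)-1}Z^\circ_{ab}(\mu;q^2)$ has rational coefficients and matching weight and invoking the known rationality of Abelian volumes.

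The heart of the argument is the $\pi$-bookkeeping in the modular substitutions of \S\ref{step3}. Each factor $E_2$ contributes either a term proportional to $\pi^2h^{-2}$ or a purely rational term proportional to $h^{-1}$, while each factor $E_4$ contributes a term proportional to $\pi^4h^{-4}$; consequently, in any term of the expansion the power of $\pi$ equals twice the power of $h^{-1}$ minus the weight of the monomial. Because every monomial in $Z^\circ$ shares the \emph{same} weight $w(\mu,\nu)$, the coefficient of $h^{-m}$ in the $h\to 0$ expansion of $Z^\circ$ is a rational multiple of the single power $\pi^{\,2m-w(\mu,\nu)}$.

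To finish, I would invoke the asymptotic formula relating $Z^\circ$ to the volume, namely $Z^\circ(\mu,\nu;q)\sim \frac{\Vol^{EO}(\cQ_1(\alpha))}{\dim_\R}\frac{(\dim_\C)!}{(1-q)^{\dim_\C}}$ as $q\to 1$. Since $1-q=h+O(h^2)$, the top surviving power is $h^{-\dim_\C}$, with coefficient $\frac{\Vol^{EO}(\cQ_1(\alpha))}{\dim_\R}(\dim_\C)!$; this automatically forces the $h^{-m}$ coefficients with $\dim_\C<m\le w(\mu,\nu)$ to vanish. By the previous paragraph the $h^{-\dim_\C}$ coefficient is rational times $\pi^{\,2\dim_\C-w(\mu,\nu)}$, and a direct computation from the formulas for $\dim_\C$, $w$ and $g_{\eff}$ gives the key identity $2\dim_\C-w(\mu,\nu)=2g_{\eff}$. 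Hence $\Vol^{EO}(\cQ_1(\alpha))=r'\pi^{2g_{\eff}}$ with $r'\in\Q$, and applying the rational normalization factor of Lemma \ref{lem:rat} yields $\Vol\cQ_1(\alpha)=\Vol^{AEZ}(\cQ_1(\alpha))=r\,\pi^{2g_{\eff}}$ with $r\in\Q$.

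The step I expect to be the main obstacle is the \emph{pure-weight} (homogeneity) property of $Z'$, since the single-power conclusion rests entirely on all monomials sharing the weight $w(\mu,\nu)$: if lower-weight monomials were present they would contribute strictly higher powers of $\pi$ to the $h^{-\dim_\C}$ coefficient, destroying the clean $\pi^{2g_{\eff}}$. Thus the crux is that the Eskin--Okounkov theorem delivers genuine homogeneity of weight $w(\mu,\nu)$ (the paper itself only verifies purity of the $\w$-brackets numerically up to weight $12$); alternatively, one must argue that the cancellations in the intermediate powers $h^{-m}$, $\dim_\C<m\le w$, forced by the asymptotic formula, conspire to leave precisely one power of $\pi$ in the leading coefficient. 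Verifying this homogeneity carefully is where the real content lies; the remaining manipulations are routine rational bookkeeping.
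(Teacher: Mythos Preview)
Your approach is essentially the paper's, but with cleaner bookkeeping. The paper likewise deduces the $\pi$-power from the modular substitutions of Step~3: it splits into cases, noting that when $l(\mu)=0$ one has $\dim_\C=w(\mu,\nu)=2g_{\eff}$ so only the top-weight monomials can reach $h^{-\dim_\C}$, while for $l(\mu)>0$ it argues that $l(\mu)$ of the $E_2$-factors must take the $1/h$ branch rather than $\pi^2/h^2$, lowering the $\pi$-power by $2l(\mu)$ to $w-2l(\mu)=2g_{\eff}$. Your uniform identity ``power of $\pi$ at $h^{-m}$ equals $2m-w$'' together with $2\dim_\C-w(\mu,\nu)=2g_{\eff}$ packages both cases at once, and you make explicit (via additivity of the weight under the M\"obius inversion of Step~2) why $Z^\circ$ inherits the weight, a point the paper's sketch leaves implicit.

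The concern you flag about homogeneity is accurate and applies equally to the paper's proof: when $l(\mu)>0$, a hypothetical component of weight $w'$ with $\dim_\C\le w'<w(\mu,\nu)$ would contribute a rational multiple of $\pi^{2\dim_\C-w'}$ at order $h^{-\dim_\C}$, a strictly higher power than $\pi^{2g_{\eff}}$. The paper does not address this either; it tacitly takes the homogeneity as part of what Theorem~1 of \cite{EO2} delivers (the remark about numerical purity of $\w$-brackets concerns the computational shortcut, not the theorem itself). So your proposal has no gap beyond what is already in the paper's own argument.
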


\begin{proof}
First note that the chosen normalization for the volume does not affect the result by \S\ref{step5}.
Note that for a stratum defined by partitions $\mu, \nu$, we have the following relations
 \begin{align*}
\dim_\C=2g_{\eff}+l(\mu)\\
w(\mu, \nu)=\dim_\C+l(\mu).
\end{align*} 

First the order of $Z'(\mu, \nu; q)$ as $q\to 1$ is smaller than $w(\mu,   \nu)$ by the main result of \cite{EO2}. The order of $Z^{\circ}(\mu, \nu;q)$ as $q\to 1$ is exactly $\dim_\C$. Note that if the stratum has no even zeros, the result is immediate since in this case $\dim_\C=w(\mu, \nu)=2g_{\eff}$ so only the highest order terms count in \eqref{eq:mod}, and for these terms the order of $\pi$ in the numerator coincide with the order of $h$ in the denominator.
If the stratum has $l(\mu)>0$ even zeros, in \eqref{eq:mod}, the second highest order term (in $1/h$) will be used instead of the terms in $1/h^2$, $l(\mu)$ times, such that the final order is $\dim=w-l(\mu)$. This decreases the power of $\pi$ by $2l(\mu)$ to give finally $2g_{\eff}=w-2l(\mu)$.
If the stratum has only even zeros the contribution of Abelian covers is given by $Z^\circ(\mu, q^2)$. We use the same modular transformations as \eqref{eq:mod} and an additional one for $E_6$, so the result is also true in this case.
\end{proof}

\newpage
\appendix

\section{Table of volumes}\label{tab}
\begin{center}
$\begin{array}{|c|c|c|c|}
\hline
 d & g& \textrm{Stratum} & \Vol\\
\hline

2&0& \cQ(-1^4)&2\pi^2\\ 
3&1& \cQ(2,-1^2)&4/3\pi^2\\ 
4&0& \cQ(1,-1^5)&\pi^4\\ 
4&1& \cQ(1^2,-1^2)&1/3\pi^4\\ 
4&1& \cQ(3,-1^3)&5/9\pi^4\\ 
4&2& \cQ(5,-1)&28/135\pi^4\\ 
4&2& \cQ(2^2)&2/3\pi^2\\  
5&0& \cQ(2,-1^6)&8/3\pi^4\\ 
5&1& \cQ(4,-1^4)&2\pi^4\\ 
5&1& \cQ(2,1,-1^3)&\pi^4\\ 
5&2& \cQ(6,-1^2)&184/135\pi^4\\ 
5&2& \cQ(4,1,-1)&8/15\pi^4\\ 
5&2& \cQ(2,1^2)&2/15\pi^4\\ 
5&2& \cQ(3,2,-1)&10/27\pi^4\\ 
5&3& \cQ(8)&10/27\pi^4\\ 
6&0& \cQ(1^2,-1^6)&1/2\pi^6\\ 
6&0& \cQ(3,-1^7)&3/4\pi^6\\ 
6&1& \cQ(1^3,-1^3)&11/60\pi^6\\ 
6&1& \cQ(3,1,-1^4)&1/3\pi^6\\ 
6&1& \cQ(5,-1^5)&7/10\pi^6\\ 
6&2& \cQ(1^4)&1/15\pi^6\\ 
6&2& \cQ(3,1^2,-1)&1/9\pi^6\\ 
6&2& \cQ(3^2,-1^2)&53/270\pi^6\\ 
6&2& \cQ(5,1,-1^2)&7/30\pi^6\\ 
6&2& \cQ(7,-1^3)&27/50\pi^6\\ 
6&3& \cQ(5,3)&14/243\pi^6\\ 
6&3& \cQ(7,1)&18/175\pi^6\\ 
6&3& \cQ(9,-1)&15224/42525\pi^6\\ 
6&1& \cQ(2^2,-1^4)&136/45\pi^4\\ 
6&2& \cQ(4,2,-1^2)&28/15\pi^4\\ 
6&2& \cQ(2^2,1,-1)&4/5\pi^4\\ 
6&3& \cQ(4^2)&4/5\pi^4\\ 
6&3& \cQ(6,2)&104/135\pi^4\\
7&0& \cQ(4,-1^8)&32/15\pi^6\\ 
7&0& \cQ(2,1,-1^7)&4/3\pi^6\\ 
7&1& \cQ(6,-1^6)&64/27\pi^6\\ 
7&1& \cQ(4,1,-1^5)&10/9\pi^6\\ 
7&1& \cQ(2,1^2,-1^4)&5/9\pi^6\\ 
7&1& \cQ(3,2,-1^5)&53/54\pi^6\\ 
7&2& \cQ(8,-1^4)&163/81\pi^6\\ 
7&2& \cQ(6,1,-1^3)&188/225\pi^6\\ 
7&2& \cQ(4,1^2,-1^2)&10/27\pi^6\\
7&2& \cQ(4,3,-1^3)&2/3\pi^6\\ 
7&2& \cQ(2,1^3,-1)&17/90\pi^6\\ 
7&2& \cQ(3,2,1,-1^2)&1/3\pi^6\\ 
7&2& \cQ(5,2,-1^3)&2863/4050\pi^6\\ 
7&2& \cQ(2^3, -1^2) & 256/15 \pi^4\\
7&3& \cQ(10,-1^2)&512/315\pi^6\\ 
7&3& \cQ(8,1,-1)&40/63\pi^6\\ 
\hline
\end{array}
\begin{array}{|c|c|c|c|c|}
\hline
d&g&\textrm{Stratum}&\Vol\\
\hline
7&3& \cQ(6,1^2)&232/945\pi^6\\ 
7&3& \cQ(6,3,-1)&776/1701\pi^6\\ 
7&3& \cQ(4,3,1)&32/189\pi^6\\ 
7&3& \cQ(5,4,-1)&56/135\pi^6\\ 
7&3& \cQ(3^2,2)&977/8505\pi^6\\ 
7&3& \cQ(5,2,1)&7/45\pi^6\\ 
7&3& \cQ(7,2,-1)&81/175\pi^6\\ 
7&3& \cQ(4,2^2)&4/3\pi^4\\ 
7&4& \cQ(12)&5614/6075\pi^6\\ 
8&0& \cQ(1^3,-1^7)&1/4\pi^8\\ 
8&0& \cQ(3,1,-1^8)&3/8\pi^8\\ 
8&0& \cQ(5,-1^9)&5/8\pi^8\\ 
8&0& \cQ(2^2,-1^8)&32/9\pi^6\\ 
8&1& \cQ(1^4,-1^4)&1/10\pi^8\\ 
8&1& \cQ(3,1^2,-1^5)&13/72\pi^8\\ 
8&1& \cQ(3^2,-1^6)&13/42\pi^8\\ 
8&1& \cQ(5,1,-1^6)&3/8\pi^8\\ 
8&1& \cQ(7,-1^7)&45/56\pi^8\\ 
8&1& \cQ(4,2,-1^6)& 16/5\pi^6\\
8&1& \cQ(2^2, 1, -1^5)& 104/63\pi^6\\
8&2& \cQ(1^5,-1)&29/840\pi^8\\ 
8&2& \cQ(3,1^3,-1^2)&23/378\pi^8\\ 
8&2& \cQ(3^2,1,-1^3)&104/945\pi^8\\ 
8&2& \cQ(5,1^2,-1^3)&47/360\pi^8\\ 
8&2& \cQ(5,3,-1^4)&17/72\pi^8\\ 
8&2& \cQ(7,1,-1^4)&429/1400\pi^8\\ 
8&2& \cQ(9,-1^5)&9383/12600\pi^8\\ 
8&2& \cQ(4^2, -1^4)& 396/175\pi^6\\
8&2& \cQ(6,2,-1^4)&11936/4725\pi^6\\ 
8&2& \cQ(4,2,1,-1^3)& 118/105\pi^6\\
8&2& \cQ(2^2, 1^2, -1^2)& 76/135\pi^6\\
8&2& \cQ(3,2^2, -1^3)& 190/189\pi^6\\
8&3& \cQ(3^2,1^2)&859/22680\pi^8\\ 
8&3& \cQ(3^3,-1)&4499/68040\pi^8\\ 
8&3& \cQ(5,1^3)&49/1080\pi^8\\ 
8&3& \cQ(5,3,1,-1)&17/216\pi^8\\ 
8&3& \cQ(5^2,-1^2)&421/2520\pi^8\\ 
8&3& \cQ(7,1^2,-1)&143/1400\pi^8\\ 
8&3& \cQ(7,3,-1^2)&51/280\pi^8\\ 
8&3& \cQ(9,1,-1^2)&9383/37800\pi^8\\ 
8&3& \cQ(11,-1^3)&4506281/7144200\pi^8\\ 
8&3& \cQ(6,4,-1^2)&7792/4725\pi^6\\ 
8&3& \cQ(8,2,-1^2)&3362/1701\pi^6\\ 
8&3& \cQ(4^2, 1, -1)& 32/45\pi^6\\
8&3& \cQ(6,2,1,-1)&1264/1575\pi^6\\ 
8&3& \cQ(4,2,1^2)&44/135\pi^6\\ 
8&3& \cQ(4,3,2,-1)&116/189\pi^6\\ 
8&3& \cQ(3,2^2,1)&16/63\pi^6\\ 
8&3& \cQ(5,2^2,-1)&424/675\pi^6\\ 

\hline
\end{array}$

$\begin{array}{|c|c|c|c|}
\hline
d&g&\textrm{Stratum}&\Vol\\
\hline

8&3& \cQ(2^4)&704/315\pi^4 \\
8&4& \cQ(7,5)&12/125\pi^8\\ 
8&4& \cQ(9,3)&8261/71442\pi^8\\ 
8&4& \cQ(11,1)&2197/12250\pi^8\\ 
8&4& \cQ(13,-1)&25/49\pi^8\\
8&4& \cQ(6^2)&2888/2835\pi^6\\ 
8&4& \cQ(8,4)&200/189\pi^6\\ 
8&4& \cQ(10,2)&1936/1575\pi^6\\
9&0& \cQ(6,-1^{10})&64/35\pi^8\\ 
9&0& \cQ(4,1, -1^9) & 16/15\pi^8\\
9&0& \cQ(2, 1^2, -1^8)& 2/3\pi^8\\
9&0&\cQ(3,2,-1^9) & \pi^8\\
9&1& \cQ(8,-1^8)&8/3\pi^8\\ 
9&1& \cQ(6,1,-1^7)&56/45\pi^8\\ 
9&1& \cQ(4,1^2, -1^6) & 743/1260\pi^8\\
9&1& \cQ(4,3,-1^7) & 71/72\pi^8\\
9&1& \cQ(4,1^2,-1^6)&1531/2520\pi^8\\ 
9&1& \cQ(4,3,-1^7)&19/18\pi^8\\ 
9&1& \cQ(2,1^3,-1^5)&151/504\pi^8\\ 
9&1& \cQ(3,2,1,-1^6)&529/1008\pi^8\\ 
9&1&\cQ(5,2,-1^7)&17/16\pi^8\\
9&1& \cQ(2^3,-1^6)&302/63\pi^6\\
9&2& \cQ(10,-1^6)&1408/525\pi^8\\ 
9&2& \cQ(8,1,-1^5)&835/756\pi^8\\ 
9&2& \cQ(6,1^2,-1^4)&2183/4725\pi^8\\ 
9&2& \cQ(6,3,-1^5)&935/1134\pi^8\\ 
9&2& \cQ(4,1^3,-1^3)&103/504\pi^8\\ 
9&2& \cQ(4,3,1,-1^4)&10/27\pi^8\\ 
9&2& \cQ(5,4,-1^5)&709/900\pi^8\\ 
9&2& \cQ(2,1^4,-1^2)&43/420\pi^8\\ 
9&2& \cQ(3,2,1^2,-1^3)&557/3024\pi^8\\ 
9&2& \cQ(3^2,2,-1^4)&1879/5670\pi^8\\ 
9&2& \cQ(5,2,1,-1^4)&533/1350\pi^8\\ 
9&2& \cQ(7,2,-1^5)&639/700\pi^8\\ 
9&2& \cQ(4,2^2,-1^4)&  356/105\pi^6\\ 
9&2& \cQ(2^3,1,-1^3)&178/105\pi^6 \\
9&3& \cQ(12,-1^4)&173521/72900\pi^8\\ 
9&3& \cQ(10,1,-1^3)&3392/3675\pi^8\\
9&3& \cQ(8,1^2,-1^2)&835/2268\pi^8\\ 
9&3& \cQ(8,3,-1^3)&158233/238140\pi^8\\ 
9&3& \cQ(6,1^3,-1)&209/1350\pi^8\\ 
9&3& \cQ(6,3,1,-1^2)&29/105\pi^8\\ 
9&3& \cQ(6,5,-1^3)&1439/2430\pi^8\\ 
9&3& \cQ(4,1^4)&401/5670\pi^8\\ 
9&3& \cQ(4,3,1^2,-1)&10/81\pi^8\\ 
9&3& \cQ(4,3^2,-1^2)&167/756\pi^8\\ 
9&3& \cQ(5,4,1,-1^2)&709/2700\pi^8\\ 
9&3& \cQ(7,4,-1^3)&3009/4900\pi^8\\ 
9&3& \cQ(3,2,1^3)&4/63\pi^8\\ 
\hline
\end{array}
\begin{array}{|c|c|c|c|}
\hline
d&g&\textrm{Stratum}&\Vol\\
\hline

9&3& \cQ(3^2,2,1,-1)&841/7560\pi^8\\ 
9&3& \cQ(5,2,1^2,-1)&2147/16200\pi^8\\ 
9&3& \cQ(5,3,2,-1^2)&2297/9720\pi^8\\ 
9&3& \cQ(7,2,1,-1^2)&429/1400\pi^8\\ 
9&3& \cQ(9,2,-1^3)&1788611/2381400\pi^8\\ 
9&3& \cQ(4^2,2,-1^2)&3496/1575\pi^6\\
9&3& \cQ(6,2^2,-1^2)&106112/42525\pi^6 \\
9&3& \cQ(4,2^2,1,-1)&1018/945\pi^6 \\
9&3& \cQ(2^3,1^2)&22/45\pi^6 \\
9&3& \cQ(3,2^3,-1)&1577/1701\pi^6 \\
9&4&\cQ(14,-1^2)&27560896/13395375\pi^8\\
9&4& \cQ(12,1,-1)&2639/3375\pi^8\\ 
9&4& \cQ(10,1^2)&1024/3375\pi^8\\ 
9&4& \cQ(10,3,-1)&512/945\pi^8\\ 
9&4& \cQ(8,3,1)&40/189\pi^8\\ 
9&4& \cQ(8,5,-1)&335/729\pi^8\\ 
9&4& \cQ(6,3^2)&769/5103\pi^8\\ 
9&4& \cQ(6,5,1)&619/3375\pi^8\\ 
9&4& \cQ(7,6,-1)&387/875\pi^8\\ 
9&4& \cQ(5,4,3)&11/81\pi^8\\ 
9&4& \cQ(7,4,1)&23/125\pi^8\\ 
9&4& \cQ(9,4,-1)&33814/70875\pi^8\\ 
9&4& \cQ(5^2,2)&343253/2551500\pi^8\\ 
9&4& \cQ(7,3,2)&3/20\pi^8\\ 
9&4& \cQ(9,2,1)&2959/13500\pi^8\\ 
9&4& \cQ(11,2,-1)&32141083/53581500\pi^8\\ 
9&4& \cQ(4^3)&58/45\pi^6\\
9&4& \cQ(6,4,2)& 6716/4725\pi^6 \\
9&4& \cQ(8,2^2)&791/486\pi^6\\
9&5& \cQ(16)&1042619/661500\pi^8\\
10&0& \cQ(1^4,-1^8)&1/8\pi^{10}\\ 
10&0& \cQ(3,1^2, -1^9)&3/16\pi^{10}\\
10&0& \cQ(3^2,-1^{10})&9/32\pi^{10}\\ 
10&0& \cQ(5,1,-1^{10})&5/16\pi^{10}\\ 
10&0& \cQ(7,-1^{11})&35/64\pi^{10}\\ 
10&0& \cQ(4,2,-1^{10})&128/45\pi^8\\
10&0& \cQ(2^2,1,-1^9)&16/9\pi^8\\
10&1& \cQ(1^5,-1^5)&163/3024\pi^{10}\\ 
10&1& \cQ(3,1^3,-1^6)&1159/12096\pi^{10}\\ 
10&1& \cQ(3^2,1,-1^7)&47/288\pi^{10}\\ 
10&1& \cQ(5,1^2, -1^7)&113/576\pi^{10}\\
10&1& \cQ(5,3, -1^8)&139/432\pi^{10}\\
10&1& \cQ(7,1,-1^8)&5/12\pi^{10}\\ 
10&1& \cQ(9,-1^9)&385/432\pi^{10}\\ 
10&1& \cQ(4^2,-1^8)&416/135\pi^8\\
10&1& \cQ(6,2,-1^8)& 29632/8505\pi^8\\
10&1& \cQ(4,2,1,-1^7)&682/405\pi^8\\
10&1& \cQ(2^2,1^2,-1^6)&499/567\pi^8\\
10&1& \cQ(3,2^2,-1^7)&733/486\pi^8\\

\hline
\end{array}$

$
\begin{array}{|c|c|c|c|}
\hline
d&g&\textrm{Stratum}&\Vol\\
\hline

10&2& \cQ(1^6,-1^2)&337/18144\pi^{10}\\ 
10&2& \cQ(3,1^4,-1^3)&403/12096\pi^{10}\\ 
10&2& \cQ(3^2,1^2,-1^4)&8302/136080\pi^{10}\\ 
10&2& \cQ(3^3,-1^5)&3247/30240\pi^{10}\\ 
10&2& \cQ(5,1^3,-1^4)&103/1440\pi^{10}\\ 
10&2& \cQ(5,3,1,-1^5)&37/288\pi^{10}\\ 
10&2& \cQ(5^2,-1^6)&233/864\pi^{10}\\ 
10&2& \cQ(7,1^2,-1^5)&1697/10080\pi^{10}\\ 
10&2& \cQ(7,3,-1^6)&491/1680\pi^{10}\\ 
10&2& \cQ(9,1,-1^6)&4037/10080\pi^{10}\\ 
10&2& \cQ(11,-1^7)&35113/36288\pi^{10}\\ 
10&2& \cQ(6,4,-1^6)&38432/14175\pi^8\\
10&2& \cQ(8,2,-1^6)&1838/567\pi^8\\
10&2& \cQ(4^2,1,-1^5)&17503/14175\pi^8\\
10&2& \cQ(6,2,1,-1^5)&19576/14175\pi^8 \\
10&2& \cQ(4,2,1^2,-1^4)&1501/2430\pi^8\\
10&2& \cQ(4,3,2,-1^5)&2503/2268\pi^8\\
10&2& \cQ(2^2,1^3,-1^3)&874/8505\pi^8 \\
10&2& \cQ(3,2^2,1,-1^4)& 3775/6804\pi^8\\
10&2& \cQ(5,2^2,-1^5)&85969/72900\pi^8\\
10&3& \cQ(3,1^5)&13/1134\pi^{10}\\
10&3& \cQ(3^2,1^3,-1)&16459/816480\pi^{10}\\ 
10&3& \cQ(3^3,1,-1^2)&1843/51030\pi^{10}\\ 
10&3& \cQ(5,1^4,-1)&13/540\pi^{10}\\ 
10&3& \cQ(5,3,1^2,-1^2)&167/3888\pi^{10}\\ 
10&3& \cQ(5,3^2,-1^3)&6029/777760\pi^{10}\\ 
10&3& \cQ(5^2,1,-1^3)&1859/20160\pi^{10}\\ 
10&3& \cQ(7,1^3,-1^2)&4211/75600\pi^{10}\\ 
10&3& \cQ(7,3,1,-1^3)&2027/20160\pi^{10}\\ 
10&3& \cQ(7,5,-1^4)&259/1200\pi^{10}\\ 
10&3& \cQ(9,1^2,-1^3)&372713/2721600\pi^{10}\\ 
10&3& \cQ(9,3,-1^4)&16819/68040\pi^{10}\\ 
10&3& \cQ(11,1,-1^4)&7476157/21432600\pi^{10}\\ 
10&3& \cQ(13,-1^5)&12725/14112\pi^{10}\\ 
10&3& \cQ(6^2,-1^4)&5608672/2679075\pi^8 \\
10&3& \cQ(8,4,-1^4)& 53245/23814\pi^8\\ 
10&3& \cQ(10,2,-1^4)&276352/99225\pi^8 \\
10&3& \cQ(6,4,1,-1^3)&13166/14175\pi^8 \\
10&3& \cQ(8,2,1,-1^3)&2525/2268\pi^8 \\
10&3& \cQ(4^2,1^2,-1^2)&17503/42525\pi^8\\
10&3& \cQ(6,2,1^2,-1^2)&6572/14175\pi^8 \\
10&3& \cQ(4^2,3,-1^3)&301/405\pi^8\\
10&3& \cQ(6,3,2,-1^3)&63862/76545\pi^8 \\
10&3& \cQ(4,2,1^3,-1)&2353/11340\pi^8 \\
10&3& \cQ(4,3,2,1,-1^2)&1261/3402\pi^8 \\
10&3& \cQ(5,4,2,-1^3)&19291/24300\pi^8\\ 
10&3& \cQ(3,2^2,1^2,-1)&3811/20412\pi^8 \\
10&3& \cQ(3^2,2^2,-1^2)&25517/76545\pi^8 \\
10&3& \cQ(5,2^2,1,-1^2)&9643/24300\pi^8 \\

\hline\end{array} 
\begin{array}{|c|c|c|c|}
\hline
d&g&\textrm{Stratum}&\Vol\\
\hline

10&3& \cQ(7,2^2,-1^3)&647/700\pi^8 \\
10&4& \cQ(3^4)&407867/18370800\pi^{10}\\ 
10&4& \cQ(5,3^2,1)&1541/58320\pi^{10}\\ 
10&4& \cQ(5^2,1^2)&268/8505\pi^{10}\\ 
10&4& \cQ(5^2,3,-1)&755/13608\pi^{10}\\ 
10&4& \cQ(7,3,1^2)&37/1080\pi^{10}\\ 
10&4& \cQ(7,3^2,-1)&1523/25200\pi^{10}\\ 
10&4& \cQ(7,5,1,-1)&259/3600\pi^{10}\\ 
10&4& \cQ(7^2,-1^2)&42083/252000\pi^{10}\\ 
10&4& \cQ(9,1^3)&23881/510300\pi^{10}\\ 
10&4& \cQ(9,3,1,-1)&16819/204120\pi^{10}\\ 
10&4& \cQ(9,5,-1^2)&34133/194400\pi^{10}\\ 
10&4& \cQ(11,1^2,-1)&7476157/64297800\pi^{10}\\ 
10&4& \cQ(11,3,-1^2)&32116747/154314720\pi^{10}\\ 
10&4& \cQ(13,1,-1^2)&12725/42336\pi^{10}\\ 
10&4& \cQ(15,-1^3)&3075526457/3857868000\pi^{10}\\ 
10&4& \cQ(8,6,-1^2)&59270/35721\pi^8 \\
10&4& \cQ(10,4,-1^2)&914432/496125\pi^8 \\
10&4& \cQ(12,2,-1^2)&1295123/546750\pi^8 \\
10&4& \cQ(6^2,1,-1)&1808/2625\pi^8 \\
10&4& \cQ(8,4,1,-1)&3746/5103\pi^8 \\
10&4& \cQ(10,2,1,-1)&151936/165375\pi^8 \\
10&4& \cQ(6,4,1^2)&193604/637875\pi^8 \\
10&4& \cQ(8,2,1^2)&1241/3402\pi^8 \\
10&4& \cQ(6,4,3,-1)&4636/8505\pi^8 \\
10&4& \cQ(8,3,2,-1)&233833/357210\pi^8 \\
10&4& \cQ(4^2,3,1)&400/1701\pi^8 \\
10&4& \cQ(6,3,2,1)&752/2835\pi^8 \\
10&4& \cQ(5,4^2,-1)&15596/30375\pi^8\\ 
10&4& \cQ(6,5,2,-1)&474376/820125\pi^8 \\
10&4& \cQ(4,3^2,2)&163/810\pi^8 \\
10&4& \cQ(5,4,2,1)&44617/182250\pi^8 \\
10&4& \cQ(7,4,2,-1)&7269/12250\pi^8 \\
10&4& \cQ(5,3,2^2)&6704/32805\pi^8 \\
10&4& \cQ(7,2^2,1)&727/2625\pi^8 \\
10&4& \cQ(9,2^2,-1)&28968137/40186125\pi^8 \\
10&5& \cQ(9,7)&54527/441000\pi^{10}\\ 
10&5& \cQ(11,5)&618346469/4546773000\pi^{10}\\ 
10&5& \cQ(13,3)&19615/116424\pi^{10}\\ 
10&5& \cQ(15,1)&3719141/14553000\pi^{10}\\ 
10&5& \cQ(17,-1)&2778996658/3978426375\pi^{10}\\
10&5& \cQ(8^2)&40606/32805\pi^8 \\
10&5& \cQ(10,6)&272768/212625\pi^8 \\
10&5& \cQ(12,4)&29197/20250\pi^8 \\
10&5& \cQ(14,2)&24718528/13395375\pi^8 \\
11&0& \cQ(8,-1^{12})&512/315\pi^{10}\\
11&0& \cQ(6,1,-1^{11})&32/35\pi^{10}\\
11&0& \cQ(4,1^2,-1^{10})&8/15\pi^{10} \\
11&0& \cQ(4,3,-1^{11})&4/5\pi^{10} \\

\hline
\end{array}
$
\end{center}

\begin{center}

$
\begin{array}{|c|c|c|c|}
\hline
d&g&\textrm{Stratum}&\Vol\\
\hline
11&0& \cQ(2,1^3,-1^9)&1/3\pi^{10}\\
11&0& \cQ(3,2,1,-1^{10})&1/2\pi^{10}\\
11&0& \cQ(5,2,-1^{11})&5/6\pi^{10}\\
11&1& \cQ(10,-1^{10})&512/175\pi^{10}\\ 
11&1& \cQ(8,1,-1^9)& 48/35\pi^{10}\\ 
11&1& \cQ(6,1^2,-1^8)&9136/14175\pi^{10}\\ 
11&1& \cQ(6,3,-1^9)&328/315\pi^{10}\\ 
11&1& \cQ(4,1^3,-1^7)&139/450\pi^{10}\\ 
11&1& \cQ(4,3,1,-1^8)&833/1620\pi^{10}\\ 
11&1& \cQ(5,4,-1^9)&99/100\pi^{10}\\
11&1& \cQ(2,1^4,-1^6)&67/420\pi^{10}\\ 
11&1& \cQ(3,2,1^2,-1^7)&1783/6480\pi^{10}\\ 
11&1& \cQ(3^2,2,-1^8)&80881/174960\pi^{10}\\ 
11&1& \cQ(5,2,1,-1^8)&14771/27000\pi^{10}\\
11&1& \cQ(7,2,-1^9)&901/800\pi^{10}\\ 
11&2& \cQ(12,-1^8)&34556/10125\pi^{10}\\
11&2& \cQ(10,1,-1^7)&64/45\pi^{10} \\
11&2& \cQ(8,1^2,-1^6)&13423/22680\pi^{10}\\
11&2& \cQ(8,3,-1^7)&33079/32400\pi^{10}\\ 
11&2& \cQ(6,1^3,-1^5)&21293/85050\pi^{10} \\
11&2& \cQ(6,3,1,-1^6)&75359/170100\pi^{10} \\
11&2& \cQ(6,5,-1^7)&1000111/1093500\pi^{10}\\ 
11&2& \cQ(4,1^4,-1^4)&527/4725\pi^{10}\\
11&2& \cQ(4,3,1^2,-1^5)&6829/34020\pi^{10}\\
11&2& \cQ(4,3^2,-1^6)&30049/85050\pi^{10}\\
11&2& \cQ(5,4,1,-1^6)&3427/8100\pi^{10}\\ 
11&2& \cQ(7,4,-1^7)& 2486/2625\pi^{10}\\
11&2& \cQ(2,1^5,-1^3)&301/5400\pi^{10} \\
11&2& \cQ(3,2,1^3,-1^4)&3907/38880\pi^{10} \\
11&2& \cQ(3^2,2,1,-1^5)&24487/136080\pi^{10} \\
11&2& \cQ(5,2,1^2,-1^5)&69433/324000\pi^{10}\\ 
11&2& \cQ(5,3,2,-1^6)&1986169/5248800\pi^{10}\\
11&2& \cQ(7,2,1,-1^6)&4813/9800\pi^{10}\\ 
11&2& \cQ(9,2,-1^7)&42563653/36741600\pi^{10}\\
11&3& \cQ(14,-1^6)&134276096/40186125\pi^{10} \\
11&3& \cQ(12,1,-1^5)&1417537/1093500\pi^{10} \\
11&3& \cQ(10,1^2,-1^4)&83648/165375\pi^{10} \\
11&3& \cQ(10,3,-1^5)&89888/99225\pi^{10} \\
11&3& \cQ(8,1^3,-1^3)&27457/136080\pi^{10} \\
11&3& \cQ(8,3,1,-1^4)&433967/1190700\pi^{10} \\
11&3& \cQ(8,5,-1^5)&2519/3240\pi^{10}\\ 
11&3& \cQ(6,1^4,-1^2)&851/10125\pi^{10} \\
11&3& \cQ(6,3,1^2,-1^3)&77299/510300\pi^{10} \\
11&3& \cQ(6,3^2,-1^4)&104486/382725\pi^{10} \\
11&3& \cQ(6,5,1,-1^4)&29632/91125\pi^{10} \\
11&3& \cQ(7,6,-1^5)&13877/18375\pi^{10} \\
11&3& \cQ(4,1^5,-1)&79/2100\pi^{10} \\
11&3& \cQ(4,3,1^3,-1^2)&457/6804\pi^{10} \\
11&3& \cQ(4,3^2,1,-1^3)&442/3645\pi^{10} \\
\hline
\end{array}
\begin{array}{|c|c|c|c|}
\hline
d&g&\textrm{Stratum}&\Vol\\
\hline

11&3& \cQ(5,4,1^2,-1^3)&2519/3240\pi^{10}\\ 
11&3& \cQ(5,4,3,-1^4)&6323/24300*\pi^{10}\\ 
11&3& \cQ(7,4,1,-1^4)& 14881/44100\pi^{10}\\
11&3& \cQ(9,4,-1^5)&4908079/5953500\pi^{10}\\ 
11&3& \cQ(2,1^6)&4343/226800\pi^{10} \\
11&3& \cQ(3,2,1^4,-1)&3067/90720\pi^{10} \\
11&3& \cQ(3^2,2,1^2,-1^2)&6173/102060\pi^{10} \\
11&3& \cQ(3^3,2,-1^3)&5957/54675\pi^{10} \\
11&3& \cQ(5,2,1^3,-1^2)&209969/2916000\pi^{10} \\
11&3& \cQ(5,3,2,1,-1^3)&37823/291600\pi^{10} \\
11&3& \cQ(5^2,2,-1^4)&946247/3402000\pi^{10} \\
11&3& \cQ(7,2,1^2,-1^3)&14123/84000\pi^{10} \\
11&3& \cQ(7,3,2,-1^4)&53/175\pi^{10} \\
11&3& \cQ(9,2,1,-1^4)&5902831/14288400\pi^{10} \\
11&3& \cQ(11,2,-1^5)&1341201979/1285956000\pi^{10} \\
11&4& \cQ(16,-1^4)&30373/10000\pi^{10} \\
11&4& \cQ(14,1,-1^3)&76128992/66976875\pi^{10} \\
11&4& \cQ(12,1^2,-1^2)&1417537/3280500\pi^{10} \\
11&4& \cQ(12,3,-1^3)&1534159/1968300\pi^{10} \\
11&4& \cQ(10,1^3,-1)&3104/18375\pi^{10} \\
11&4& \cQ(10,3,1,-1^2)&4288/14175\pi^{10} \\
11&4& \cQ(10,5,-1^3)&32864/50625\pi^{10} \\
11&4& \cQ(8,1^4)&2339/34020\pi^{10} \\
11&4& \cQ(8,3,1^2,-1)&433967/3572100\pi^{10} \\
11&4& \cQ(8,3^2,-1^2)&23341721/107163000\pi^{10} \\
11&4& \cQ(8,5,1,-1^2)&2519/9720\pi^{10} \\
11&4& \cQ(8,7,-1^3)&5939/9800\pi^{10} \\
11&4& \cQ(6,3,1^3)&6568/127575\pi^{10} \\
11&4& \cQ(6,3^2,1,-1)&116231/1275750\pi^{10} \\
11&4& \cQ(6,5,1^2,-1)&59333/546750\pi^{10} \\
11&4& \cQ(6,5,3,-1^2)&63767/328050\pi^{10} \\
11&4& \cQ(7,6,1,-1^2)&138931/551250\pi^{10} \\
11&4& \cQ(9,6,-1^3)&248951329/401861250\pi^{10} \\
11&4& \cQ(4,3^2,1^2)&62987/1530900\pi^{10} \\
11&4& \cQ(4,3^3,-1)&37193/510300\pi^{10} \\
11&4& \cQ(5,4,1^3)&5963/121500\pi^{10} \\
11&4& \cQ(5,4,3,1,-1)&6323/72900\pi^{10} \\
11&4& \cQ(5^2,4,-1^2)&157357/850500\pi^{10}\\
11&4& \cQ(7,4,1^2,-1)&14881/132300\pi^{10} \\
11&4& \cQ(7,4,3,-1^2)&8891/44100\pi^{10} \\
11&4& \cQ(9,4,1,-1^2)&4908079/17860500\pi^{10} \\
11&4& \cQ(11,4,-1^3)&187259839/267907500\pi^{10} \\
11&4& \cQ(3^3,2,1)&37859/1020600\pi^{10} \\
11&4& \cQ(5,3,2,1^2)&4831/109350\pi^{10} \\
11&4& \cQ(5,3^2,2,-1)&2050399/26244000\pi^{10} \\
11&4& \cQ(5^2,2,1,-1)&1898809/20412000\pi^{10} \\
11&4& \cQ(7,2,1^3)&1547/27000\pi^{10} \\
11&4& \cQ(7,3,2,1,-1)&1021/10080\pi^{10} \\
11&4& \cQ(7,5,2,-1^2)&19429/90000\pi^{10} \\
\hline
\end{array}
$
\end{center}

$
\begin{array}{|c|c|c|c|}
\hline
d&g&\textrm{Stratum}&\Vol\\
\hline

11&4& \cQ(9,2,1^2,-1)&59225353/428652000\pi^{10} \\
11&4& \cQ(9,3,2,-1^2)&15901567/64297800\pi^{10} \\
11&4& \cQ(11,2,1,-1^2)&7476157/21432600\pi^{10} \\
11&4& \cQ(13,2,-1^3)&12805/14112\pi^{10} \\
11&5& \cQ(18,-1^2)&10811701157/3978426375\pi^{10}\\
11&5& \cQ(16,1,-1)&1019257/1010625\pi^{10} \\
11&5& \cQ(14,1^2)&844358464/2210236875\pi^{10} \\
11&5& \cQ(14,3,-1)&540294976/795685275\pi^{10} \\
11&5& \cQ(12,3,1)&52108/200475\pi^{10} \\
11&5& \cQ(12,5,-1)&1571104223/2841733125\pi^{10} \\
11&5& \cQ(10,3^2)&1261376/7016625\pi^{10} \\
11&5& \cQ(10,5,1)&838592/3898125\pi^{10} \\
11&5& \cQ(10,7,-1)&72256/144375\pi^{10} \\
11&5& \cQ(8,5,3)&11532707/75779550\pi^{10} \\
11&5& \cQ(8,7,1)&767/3850\pi^{10} \\
11&5& \cQ(9,8,-1)&1687928/3444525\pi^{10} \\
11&5& \cQ(6,5^2)&126494329/947244375\pi^{10} \\
11&5& \cQ(7,6,3)&2531/17325\pi^{10} \\
11&5& \cQ(9,6,1)&645007/3189375\pi^{10} \\
11&5& \cQ(11,6,-1)&10284958151/19892131875\pi^{10} \\
11&5& \cQ(7,5,4)&58876/433125\pi^{10} \\
11&5& \cQ(9,4,3)&84271/535815\pi^{10} \\
11&5& \cQ(11,4,1)&500315647/2210236875\pi^{10} \\
11&5& \cQ(13,4,-1)&2890/4851\pi^{10} \\
11&5& \cQ(7^2,2)&1504721/9702000\pi^{10} \\
11&5& \cQ(9,5,2)&226006693/1377810000\pi^{10} \\
11&5& \cQ(11,3,2)&502537061/2546192880\pi^{10} \\
11&5& \cQ(13,2,1)&68105/232848\pi^{10} \\
11&5& \cQ(15,2,-1)&50098313761/63654822000\pi^{10} \\
11&6& \cQ(20)&34148209117/14467005000\pi^{10}\\
\hline
\end{array}
$

\section{Alternative computation of volume}\label{app:alt}
We give here an alternative computation of the volume of the hyperelliptic stratum $\cQ(2,1^2)$ (\S~\ref{sect:volhyp}), using the method of diagrams couting described in \S~\ref{ssection:voldim5}. This allows us to check one more time that our choices of normalization for the volumes are consistent. Furthermore this example is a good representative of the general case: it illustrates how multi-zeta values appear in the computations and how they compensate to give at the end the expected value for the total volumes of the stratum.

\subsection{$\cQ(2, -1^2)$}\label{subsection:alt2-1-1}
The diagrams for this stratum are given in Figure \ref{fig:Q2ii}.

\begin{figure}[h!]
\begin{center}
\begin{tabular}{|c|p{2cm}|c|c|c|}
\hline
\multicolumn{2}{|c|}{Diagrams}& $(l_1, \dots, l_k)$ & Sym & Contribution \\
\hline
\includegraphics{plot/oneloop.eps}
\begin{picture}(50,20)
\put(7,-20)
{\begin{picture}(0,0)
\put(3,13){\tiny{0}}
\put(14,24){$w$}
\end{picture}}\end{picture}
&
\includegraphics{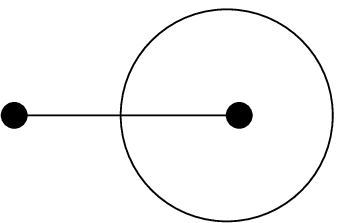}

& $w$ & 1 & $\cfrac{4N^3}{3}\zeta(3)$\\
&&&&  \\
\hline
\includegraphics{plot/ballon.eps}

\begin{picture}(50,70)
\put(7,-20)
{\begin{picture}(0,0)
\put(14,2){\tiny{0}}
\put(20, 20){$2w$}
\put(13, 65){$w$}
\put(14,42){\tiny{0}}
\end{picture}}\end{picture}
&
\includegraphics{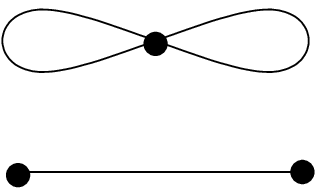}
& 1&$\cfrac{1}{2}$ &$ \cfrac{N^3}{6}(8\zeta(2)-9\zeta(3))$ \\
&&&&\\
\hline
\includegraphics{plot/tige.eps}

\begin{picture}(50,50)
\put(16,-25)
{\begin{picture}(0,0)
\put(3,7){\tiny{0}}
\put(3,46){\tiny{1}}
\put(8,25){$W$}
\end{picture}}\end{picture}
&
\includegraphics{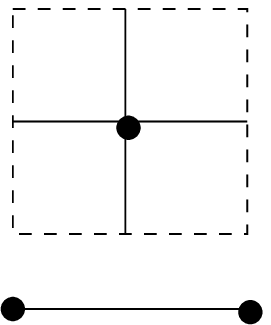}
& $W$ & $\cfrac{1}{2}$ & $ \cfrac{N^3}{6}\zeta(3)$\\
&&&&\\
\hline
\end{tabular}
\end{center}
\caption{
\label{fig:Q2ii}
Diagrams for $\cQ(2, -1^2)$
}
\end{figure}

Summing all the contributions we get $\cfrac{4N^3}{3}\zeta(2)$ so by (\ref{eq:volint}), we obtain:\[\Vol \cQ(2, -1^2)=8\zeta(2)=\cfrac{4\pi^2}{3},\]
which coincides with the value found in (\ref{eq:ex2}).

\subsection{$\cQ(1^2, -1^2)$}\label{subsection:alt11-1-1}
The diagrams for this stratum are given in Figure \ref{fig:Q11ii}.

\begin{figure}[h!]
\begin{center}
\begin{tabular}{|c|p{2cm}|c|c|c|}
\hline
\multicolumn{2}{|c|}{Diagrams}& $(l_1, \dots, l_k)$ & weight & Contribution \\
\hline
\includegraphics{plot/oneloop.eps}
\begin{picture}(50,20)
\put(7,-20)
{\begin{picture}(0,0)
\put(3,13){\tiny{0}}
\put(14,24){$w$}
\end{picture}}\end{picture}
&
\includegraphics{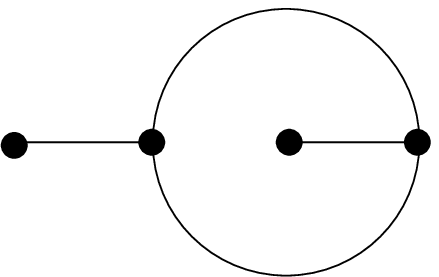}
& $w^2$ & 2 & $ \cfrac{1}{2}\cfrac{(2N)^4}{4}\zeta(4)=2 N^4\zeta(4)$ \\
&&&&\\
\hline
\includegraphics{plot/ballon.eps}

\begin{picture}(50,60)
\put(7,-30)
{\begin{picture}(0,0)
\put(14,2){\tiny{0}}
\put(20, 20){$W_1$}
\put(13, 65){$w_2$}
\put(14,42){\tiny{0}}
\end{picture}}\end{picture}
&
\includegraphics{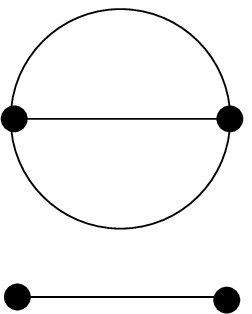}
&$\mathds 1_{\{2w_2>W_1\}}$ & $\cfrac{1}{3}{3\choose 1}$&\\
\cline{2-4}
\begin{picture}(50,20)
\end{picture}
&
\includegraphics{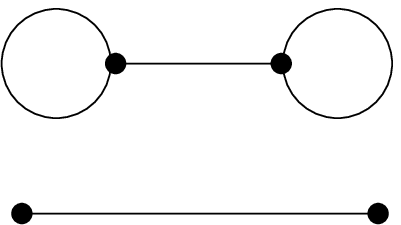}
&$\mathds 1_{\{2w_2<W_1\}}$ & $\cfrac{1}{3}{3\choose 1}$&$ \cfrac{N^4}{3}(\zeta(2))^2$
\\
&&&&\\
\hline
\includegraphics{plot/tige.eps}

\begin{picture}(50,50)
\put(16,-25)
{\begin{picture}(0,0)
\put(3,7){\tiny{0}}
\put(3,46){\tiny{1}}
\put(8,25){$W$}
\end{picture}}\end{picture}
&
\includegraphics{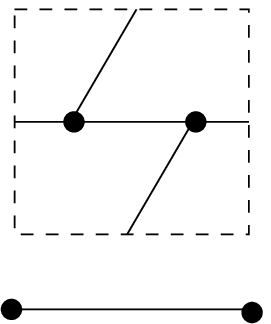}
&$\cfrac{W^2}{2}$& $\cfrac{1}{3}$&$ \cfrac{N^4}{12}\zeta(4)$\\
&&&&\\
\hline
\includegraphics{plot/duo.eps}

\begin{picture}(50,30)
\put(10,-25)
{\begin{picture}(0,0)
\put(11,40){\tiny{0}}
\put(11,7){\tiny{0}}
\put(25,20){$w_2$}
\put(-11,20){$w_1$}
\end{picture}}\end{picture}

&
\includegraphics{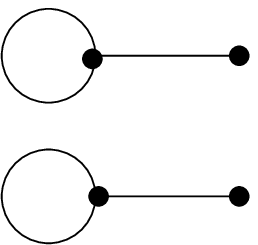}
& $\mathds 1_{\{w_1<w_2\}}\mathds 1_{\{w_2-w_1\in\N\}}$ & 2 & 
$\cfrac{5N^4}{6}\zeta(4)$\\
&&&&\\
\hline

\end{tabular}
\end{center}
\caption{
\label{fig:Q11ii}
Diagrams for $\cQ(1^2, -1^2)$
}
\end{figure}

Summing all the contributions, we obtain $\cfrac{15N^4}{4}\zeta(4)$. So by (\ref{eq:volint}):
\[\Vol\cQ(1^2, -1^2)=30\zeta(4)=\cfrac{\pi^4}{3},\]
which coincides with the value found in (\ref{eq:ex1}).

\subsection{$\cQ(2, 1^2)$ \label{subsection:alt211}}

The diagrams for this stratum are given in Figure \ref{fig:Q211}.

\begin{figure}
\begin{center}
\begin{tabular}{|c|p{2cm}|c|c|c|}
\hline
\multicolumn{2}{|c|}{Diagrams}& $(l_1, \dots, l_k)$ & weight & Contribution \\
\hline
\includegraphics{plot/twoloops.eps}

\begin{picture}(50,20)
\put(8,-18)
{\begin{picture}(0,0)
\put(14,2){\tiny{0}}
\put(20, 22){$w_1$}
\put(0, 22){$w_2$}

\end{picture}}\end{picture}
&
\includegraphics{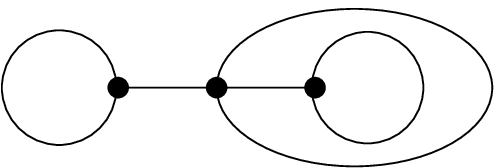}
& $(w_2-w_1)\mathds 1_{\{w_2>w_1\}}$&1& $ \displaystyle\frac{4N^5}{15}\zeta(2)\zeta(3)$ \\
\cline{2-5}
\begin{picture}(50,15)
\end{picture}
&
\includegraphics{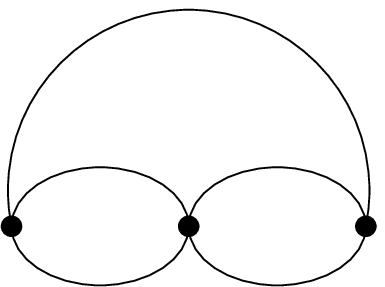}
&$\mathds 1_{\{w_2>w_1\}}$ &1 &$O(N^4)$\\
&&&&\\
\hline
\includegraphics{plot/oneloop.eps}
\begin{picture}(50,25)
\put(7,-20)
{\begin{picture}(0,0)
\put(3,13){\tiny{1}}
\put(14,24){$w$}
\end{picture}}\end{picture}
&
\includegraphics{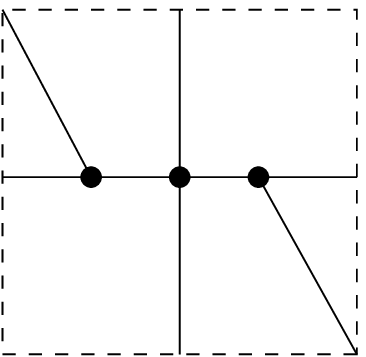}
& $\displaystyle\frac{(2w)^3}{2\cdot 3!}$ & 1 &$\displaystyle\frac{8N^5}{15}\zeta(5) $\\
&&&&\\
\hline
\includegraphics{plot/ballon.eps}

\begin{picture}(50,60)
\put(7,-30)
{\begin{picture}(0,0)
\put(14,2){\tiny{1}}
\put(20, 20){$W_1$}
\put(13, 65){$w_2$}
\put(14,42){\tiny{0}}
\end{picture}}\end{picture}
&
\includegraphics{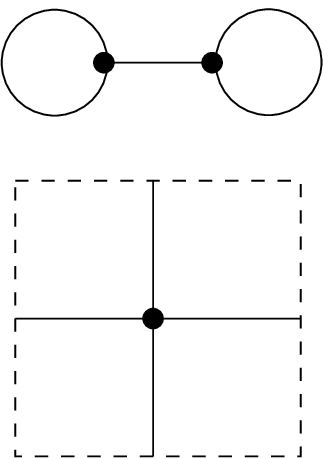}
&$W_1\mathds 1_{\{W_1>2w_2\}}$ &$\displaystyle\frac{1}{4}$ &\\
\cline{2-4}
\begin{picture}(50,50)
\end{picture}
&
\includegraphics{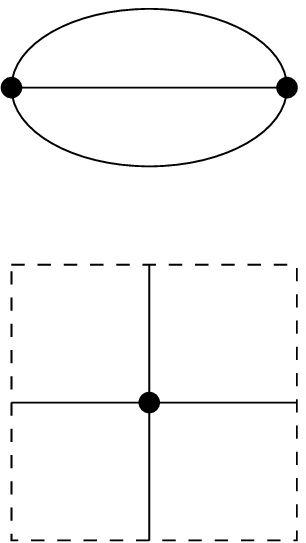}
&$W_1\mathds 1_{\{W_1<2w_2\}}$&$\displaystyle\frac{1}{4}$&$\displaystyle\frac{N^5}{30}\zeta(2)\zeta(3)$\\
&&&&\\
\hline
\includegraphics{plot/ballon.eps}
\begin{picture}(50,60)
\put(7,-18)
{\begin{picture}(0,0)
\put(14,2){\tiny{0}}
\put(20, 20){$W_1$}
\put(13, 65){$w_2$}
\put(14,42){\tiny{0}}
\end{picture}}\end{picture}
&
\includegraphics{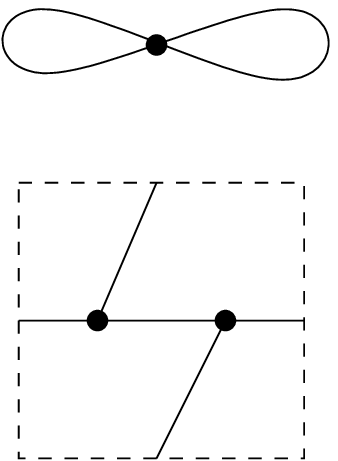}
& $\displaystyle\frac{W_1^2}{2}\mathds 1_{\{W_1=2w_2\}}$ & $\displaystyle\frac{1}{2}\cdot\displaystyle\frac{1}{3}$ &$\displaystyle\frac{N^5}{60}(32\zeta(4)-33\zeta(5))$ \\
&&&&\\
\hline
\includegraphics{plot/tige.eps}

\begin{picture}(50,60)
\put(16,-15)
{\begin{picture}(0,0)
\put(3,7){\tiny{1}}
\put(3,46){\tiny{1}}
\put(8,25){$W$}
\end{picture}}\end{picture}
&
\includegraphics{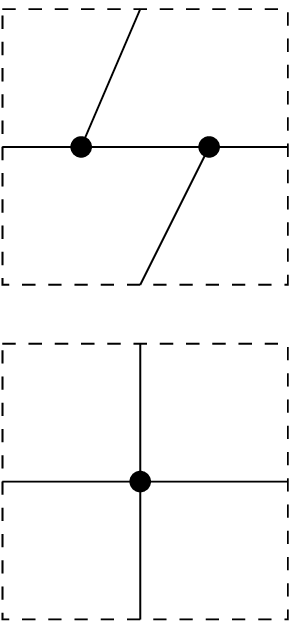}
& $W\displaystyle\frac{W^2}{2}$ & $\displaystyle\frac{1}{3}\cdot\displaystyle\frac{1}{4}$ &$\displaystyle\frac{N^5}{60}\zeta(5)$ \\
&&&&\\
\hline
\includegraphics{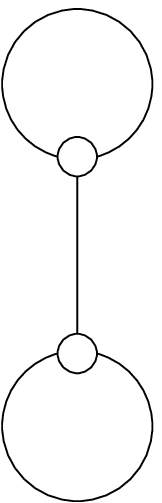}

\begin{picture}(50,50)
\put(9,-28)
{\begin{picture}(0,0)
\put(14,3){\tiny{0}}
\put(20, 20){$W_2$}
\put(13, 65){$w_3$}
\put(14,42){\tiny{0}}
\put(13, -20){$w_1$}
\end{picture}}\end{picture}
&
\includegraphics{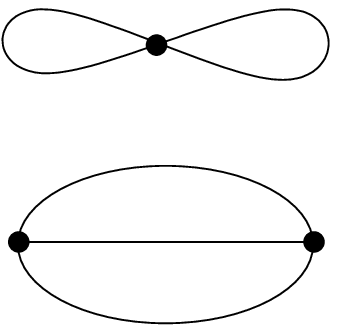}
& $\mathds 1_{\{w_1>w_3, W_2=2w_1\}}$ & $\displaystyle\frac{1}{2}$ &\\
\cline{2-4}
\begin{picture}(50,20)
\end{picture}
&
\includegraphics{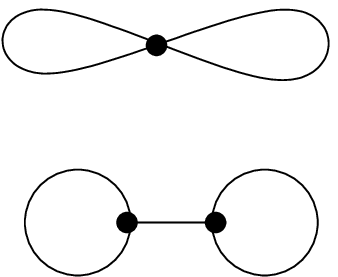}
& $\mathds 1_{\{w_3>w_1, W_2=2w_1\}}$ & $\displaystyle\frac{1}{2}$ &$\displaystyle\frac{N^5}{30}(8\zeta(2)^2-9\zeta(2)\zeta(3))$\\
&&&&\\
\hline

\end{tabular}
\end{center}
\caption{
\label{fig:Q211}
Diagrams for $\cQ(2, 1^2)$
}
\end{figure}

Summing all the contributions we get $\cfrac{6N^5}{5}\zeta(4)$ so by (\ref{eq:volint}), we obtain:\[\Vol \cQ(2, 1^2)=12\zeta(4)=\cfrac{2\pi^4}{15},\]
which coincides with the value found in (\ref{eq:ex2}).

\subsection{$\cQ(2,2)$ \label{subsection:alt22}}
On the 8 configurations of ribbon graphs of genus 2 with two vertices of valency 4, only 5 correspond to flat surfaces in the stratum $\cQ(2,2)$ (see Figure \ref{fig:Q22}), the others belong to $\cH(1,1)$ (see Figure \ref{fig:H11}).

\begin{figure}[h!]
\begin{center}
\begin{tabular}{|c|p{2cm}|c|c|c|}
\hline
\multicolumn{2}{|c|}{Diagrams}& $(l_1, \dots, l_k)$ & weight & Contribution \\
\hline
\includegraphics{plot/twoloops.eps}

\begin{picture}(50,30)
\put(8,-5)
{\begin{picture}(0,0)
\put(14,2){\tiny{0}}
\put(20, 22){$w_1$}
\put(0, 22){$w_2$}
\put(84,2){2}
\put(84,-8){2}
\put(84,15){1}
\put(84,25){1}

\end{picture}}\end{picture}
&
\includegraphics{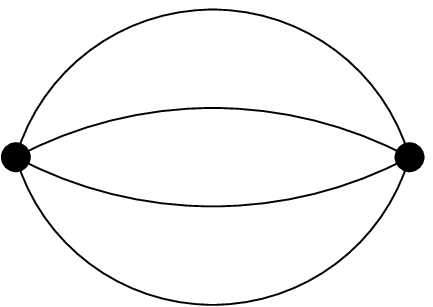}
& $2w_1\mathds 1_{\{w_2=w_1\}}$&$\displaystyle\frac{1}{4}$& $ N^4(\zeta(3)-\zeta(4))$ \\
&&&&\\
\hline
\includegraphics{plot/oneloop.eps}
\begin{picture}(50,20)
\put(7,-20)
{\begin{picture}(0,0)
\put(3,13){\tiny{0}}
\put(14,24){$w$}
\end{picture}}\end{picture}
&
\includegraphics{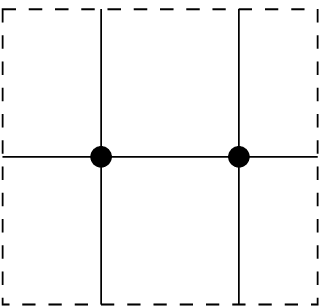}
& $\displaystyle\frac{(2w)^2}{2\cdot 2}$ & $\displaystyle\frac{1}{2} $&$\displaystyle\frac{N^4}{2}\zeta(4) $\\
&&&&\\
\hline
\includegraphics{plot/ballon.eps}
\begin{picture}(50,60)
\put(7,-18)
{\begin{picture}(0,0)
\put(14,2){\tiny{0}}
\put(20, 20){$W_1$}
\put(13, 65){$w_2$}
\put(14,42){\tiny{0}}
\end{picture}}\end{picture}
&
\includegraphics{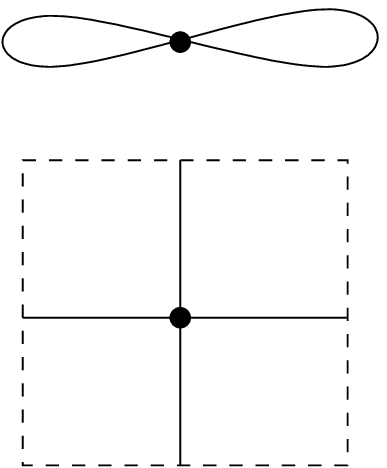}
& $W_1\mathds 1_{\{W_1=2w_2\}}$ & $2\cdot\displaystyle\frac{1}{4}\cdot\displaystyle\frac{1}{2}$ &$\displaystyle\frac{N^4}{16}(16\zeta(3)-17\zeta(4))$ \\
&&&&\\
\hline
\includegraphics{plot/tige.eps}

\begin{picture}(50,60)
\put(16,-15)
{\begin{picture}(0,0)
\put(3,7){\tiny{1}}
\put(3,46){\tiny{1}}
\put(8,25){$W$}
\end{picture}}\end{picture}
&
\includegraphics{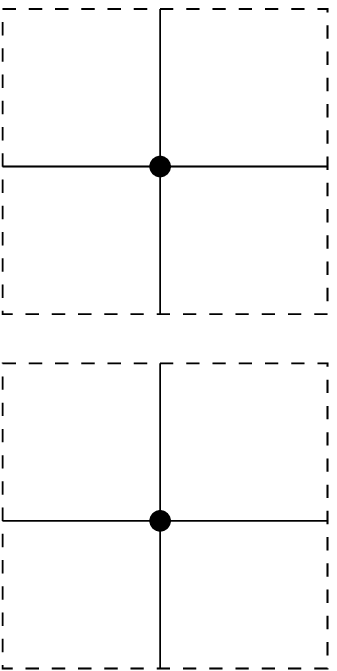}
& $W^2$ & $\displaystyle\frac{1}{4}\cdot\displaystyle\frac{1}{4}$ &$\displaystyle\frac{N^4}{32}\zeta(4)$ \\
&&&&\\
\hline
\includegraphics{plot/halteres.eps}

\begin{picture}(50,90)
\put(9,7)
{\begin{picture}(0,0)
\put(14,3){\tiny{0}}
\put(20, 20){$W_2$}
\put(13, 65){$w_3$}
\put(14,42){\tiny{0}}
\put(13, -20){$w_1$}
\end{picture}}\end{picture}

&
\includegraphics{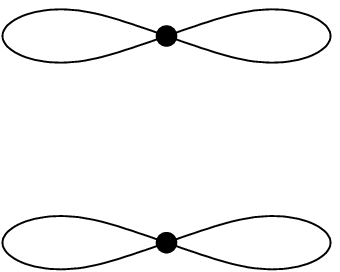}
& $\mathds 1_{\{W_2=2w_1=2w_3\}}$ & $\displaystyle\frac{1}{2}\cdot\displaystyle\frac{1}{2}$ &$\displaystyle\frac{N^4}{2}(\zeta(2)-4\zeta(3)+\displaystyle\frac{49}{16}\zeta(4))$\\
&&&&\\
\hline

\end{tabular}
\end{center}
\caption{
\label{fig:Q22}
Diagrams for $\cQ(2, 2)$
}
\end{figure}

Summing all the contributions we get $\cfrac{N^4}{2}\zeta(2)$ so by (\ref{eq:volint}), we obtain:\[\Vol \cQ(2, 2)=4\zeta(2)=\cfrac{2\pi^2}{3},\]
which coincides with the value (\ref{eq:ex2}) divided by 2, because here we count surfaces modulo the hyperelliptic involution.

\begin{figure}[h!]
\begin{center}
\begin{tabular}{|c|p{4.5cm}|}
\hline
\includegraphics{plot/twoloops.eps}

\begin{picture}(50,40)
\put(8,5)
{\begin{picture}(0,0)
\put(14,3){\tiny{0}}
\put(84,2){2}
\put(84,-8){1}
\put(84,15){1}
\put(84,25){2}

\end{picture}}\end{picture}
&
\includegraphics{plot_autre/24genre0.eps}
\includegraphics{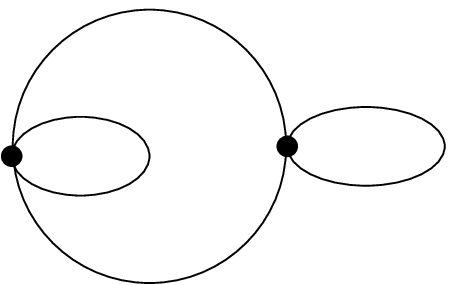}
\\
\hline
\includegraphics{plot/oneloop.eps}
\begin{picture}(50,40)
\put(7,-5)
{\begin{picture}(0,0)
\put(3,13){\tiny{0}}
\end{picture}}\end{picture}
&
\includegraphics{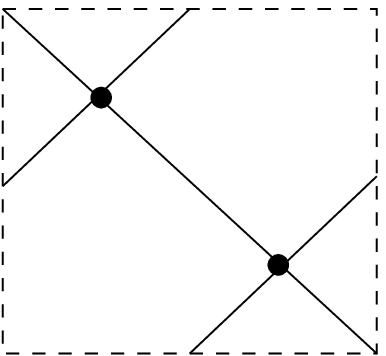}

\\
\hline
\includegraphics{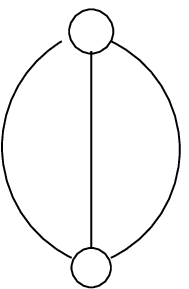}

\begin{picture}(50,55)
\put(12,-8)
{\begin{picture}(0,0)
\put(14,2){\tiny{0}}
\put(14,49){\tiny{0}}
\end{picture}}\end{picture}

&
\includegraphics{plot_autre/doublehuit.eps}
\\
\hline

\end{tabular}
\end{center}
\caption{
\label{fig:H11}
Diagrams for $\cH(1, 1)$
}
\end{figure}

\section{Toolbox}\label{app:toolbox}

 Recall that \[\zeta(2)=\cfrac{\pi^2}{6},\quad \zeta(4)=\cfrac{\pi^4}{90}\quad \mbox{so }(\zeta(2))^2=\cfrac{5}{2}\zeta(4).\]
 Recall the definition of the multiple zeta functions:\[\zeta(s_1, \dots, s_k)=\sum_{n_1>\dots>n_k>0}\frac{1}{n_1^{s_1}\dots n_k^{s_k}}\]

\begin{lem}\label{lem:tool1}
\begin{eqnarray} \forall m\geq 2, \quad \sum_{k\geq 0}\frac{1}{(2k+1)^m}=\frac{2^m-1}{2^m}\zeta(m)\label{eq:sumodd}\\
\forall m\geq 1, \quad\sum_{i=1}^{N}i^m\equi{N\to\infty} \cfrac{N^{m+1}}{m+1}\label{eq:sumpower}\\
\forall m\geq 1, \quad \Card\{(l_1, \dots, l_m)\in\N^m | N=2l_1+\dots + 2l_j+l_{j+1}+\dots +l_m \}\notag\\
\equi{N\to\infty}\cfrac{N^{m-1}}{2^j(m-1)!}\label{eq:partition}
\end{eqnarray}
\end{lem}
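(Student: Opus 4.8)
The plan is to handle the three statements independently, since each is elementary. First I would prove the identity \eqref{eq:sumodd} by splitting the Riemann zeta sum according to the parity of the summation index. Writing $\zeta(m)=\sum_{n\geq 1}n^{-m}$ and isolating the even terms gives $\sum_{j\geq 1}(2j)^{-m}=2^{-m}\zeta(m)$, so the odd terms contribute $\zeta(m)-2^{-m}\zeta(m)=\frac{2^m-1}{2^m}\zeta(m)$, which is exactly $\sum_{k\geq 0}(2k+1)^{-m}$. The hypothesis $m\geq 2$ ensures absolute convergence, so this regrouping is legitimate.

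Next, for \eqref{eq:sumpower} I would read the normalized partial sum as a Riemann sum:
\[\frac{1}{N^{m+1}}\sum_{i=1}^N i^m=\frac{1}{N}\sum_{i=1}^N\left(\frac{i}{N}\right)^m\longrightarrow\int_0^1 x^m\,dx=\frac{1}{m+1}\quad(N\to\infty).\]
Equivalently one may invoke Faulhaber's formula, which exhibits $\sum_{i=1}^N i^m$ as a polynomial in $N$ of degree $m+1$ with leading coefficient $\frac{1}{m+1}$.

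For the counting statement \eqref{eq:partition} the cleanest route is via generating functions. The cardinality in question is the coefficient of $x^N$ in
\[\frac{1}{(1-x^2)^j(1-x)^{m-j}}=\frac{1}{(1-x)^m(1+x)^j},\]
whose only dominant singularity is a pole of order $m$ at $x=1$; since the remaining factor $(1+x)^{-j}$ is analytic and equals $2^{-j}$ there, the coefficient is asymptotic to $2^{-j}$ times the coefficient of $x^N$ in $(1-x)^{-m}$, that is $2^{-j}\binom{N+m-1}{m-1}\sim\frac{N^{m-1}}{2^j(m-1)!}$, as claimed. Interpreting the quantity geometrically, this is the leading term $\frac{N^{m-1}}{(m-1)!\prod_i a_i}$ of the denumerant of the linear form $2l_1+\dots+2l_j+l_{j+1}+\dots+l_m$, with $\prod_i a_i=2^j$; note that whether the $l_i$ are required to be positive or merely nonnegative only perturbs lower-order terms, leaving the leading asymptotic unchanged.

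None of these steps is a genuine obstacle; the only point demanding a little care is the singularity analysis in \eqref{eq:partition}, namely justifying the transfer from the pole structure of the generating function to the asymptotics of its coefficients while the secondary factor $(1+x)^{-j}$ stays analytic and nonvanishing at $x=1$. This is a routine application of the standard transfer theorem, and can alternatively be replaced by a direct lattice-point count of the dilating simplex $\{l\geq 0:\sum_i a_i l_i=N\}$.
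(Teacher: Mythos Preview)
The paper states this lemma without proof, treating all three formulas as standard elementary facts collected in a ``toolbox'' appendix. Your arguments are correct and entirely appropriate: the parity split for \eqref{eq:sumodd}, the Riemann-sum (or Faulhaber) justification for \eqref{eq:sumpower}, and the generating-function singularity analysis for \eqref{eq:partition} are exactly the natural routes.

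One small caveat on your treatment of \eqref{eq:partition}: the rational function $\frac{1}{(1-x)^m(1+x)^j}$ also has a pole at $x=-1$, on the same circle as $x=1$, so strictly speaking $x=1$ is not the \emph{only} singularity of minimal modulus. What makes the transfer go through is that the pole at $x=-1$ has order $j$, contributing an oscillating term of size $O(N^{j-1})$, which is negligible against the $N^{m-1}$ term from $x=1$ provided $j<m$. In the paper's applications this is always the case (there is at least one $l_i$ with coefficient $1$), but the statement as written does not exclude $j=m$, where the asymptotic fails in the strict $\sim$ sense (the count vanishes for odd $N$). This is a harmless imprecision in the lemma's statement rather than a flaw in your argument; your alternative lattice-point/denumerant interpretation sidesteps the issue in the same way.
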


We recall the following standard fact (Lemma 3.7 of \cite{AEZ2}):

\begin{lem}[Athreya-Eskin-Zorich] 
\[\sum_{\substack{H\cdot W\leq N\\W\in \N^k, W \in\N^k}}W_1^{a_1+1}\dots W_k^{a_k+1}\sim \frac{N^{a+2k}}{(a+2k)!}\cdot\prod_{i=1}^k (a_i+1)\zeta(a_i+2),\]
where $a=\sum_{i=1}^k a_i$.
\end{lem}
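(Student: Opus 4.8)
The plan is to reduce the two-index sum to a product of one-dimensional zeta values, by summing over the ``width'' variables $W_i$ first and over the ``height'' variables $H_i$ afterwards. Write $a=\sum_{i=1}^k a_i$ and fix for the moment a vector $H=(H_1,\dots,H_k)\in\N^k$. The inner sum $\sum_{W\in\N^k,\ \sum_i H_iW_i\le N}\prod_i W_i^{a_i+1}$ is a Riemann sum for the monomial $\prod_i w_i^{a_i+1}$ over the simplex $\{w\in\R_{>0}^k:\sum_i H_iw_i\le N\}$; since the integrand is polynomial and the domain dilates with $N$, it is asymptotic to the corresponding integral.

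First I would evaluate that integral. The substitution $u_i=H_iw_i$ turns it into $\bigl(\prod_i H_i^{-(a_i+2)}\bigr)\int_{\sum u_i\le N}\prod_i u_i^{a_i+1}\,du$, and the Dirichlet integral over the standard simplex gives $\int_{\sum u_i\le N,\,u>0}\prod_i u_i^{a_i+1}\,du=\dfrac{\prod_i\Gamma(a_i+2)}{\Gamma(a+2k+1)}\,N^{a+2k}=\dfrac{\prod_i (a_i+1)!}{(a+2k)!}\,N^{a+2k}$. (Alternatively one can iterate the elementary power-sum asymptotic \eqref{eq:sumpower} $k$ times.) Hence, for each fixed $H$, the inner sum is $\sim \prod_i H_i^{-(a_i+2)}\cdot\dfrac{\prod_i (a_i+1)!}{(a+2k)!}\,N^{a+2k}$. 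Summing this over all $H\in\N^k$, and noting that every $W_i\ge 1$ forces $\sum_i H_i\le\sum_i H_iW_i\le N$ so that only the $H$ with $\sum_i H_i\le N$ contribute, the geometric weights assemble into $\sum_{H\in\N^k}\prod_i H_i^{-(a_i+2)}=\prod_i\zeta(a_i+2)$, the series converging precisely because $a_i+2\ge 2>1$. This yields the stated asymptotic $\dfrac{N^{a+2k}}{(a+2k)!}\prod_{i=1}^k (a_i+1)!\,\zeta(a_i+2)$, the factor in the product being $(a_i+1)!=\Gamma(a_i+2)$.

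The main obstacle is the interchange of the $H$-summation with the $N\to\infty$ asymptotics of the inner $W$-sum: the per-$H$ Riemann-sum approximation carries an error, and I must check that these errors sum to $o(N^{a+2k})$ uniformly. The clean way is a dominated-convergence argument: bound the inner sum for each $H$ by a constant multiple of $\prod_i H_i^{-(a_i+2)}N^{a+2k}$ (valid since $\sum_i H_i\le N$) against the summable majorant $\prod_i H_i^{-(a_i+2)}$, so that the tail in $H$ contributes negligibly while the partial sums converge to $\prod_i\zeta(a_i+2)$.

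A fully equivalent route that sidesteps this bookkeeping is to pass to the generating function $F(x)=\prod_i\bigl(\sum_{W\ge1}W^{a_i+1}\tfrac{x^W}{1-x^W}\bigr)$, obtained by factoring $x^{\sum_i H_iW_i}=\prod_i x^{H_iW_i}$ and using $\tfrac{x^W}{1-x^W}=\sum_{H\ge1}x^{HW}$. A Mellin estimate gives each factor $\sim\Gamma(a_i+2)\zeta(a_i+2)\,t^{-(a_i+2)}$ as $x=e^{-t}\to1^-$, hence $F(e^{-t})\sim\bigl(\prod_i\Gamma(a_i+2)\zeta(a_i+2)\bigr)t^{-(a+2k)}$, and a Karamata Tauberian theorem applied to the nonnegative coefficients of $F$ delivers the same constant. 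I would fall back on this analytic version if the uniform tail estimate in the elementary approach proved delicate.
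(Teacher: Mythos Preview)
The paper does not give its own proof of this lemma; it simply cites it as Lemma~3.7 of \cite{AEZ2}. However, the method you use is precisely the one the paper borrows from \cite{AEZ2} when proving the variants in Lemma~\ref{lem:tool2} (see the proof of \eqref{eq:combi3}): introduce rescaled variables $x_i=H_iW_i/N$, interpret the inner $W$-sum as a Riemann sum for $\int_{\Delta^k}\prod_i x_i^{a_i+1}\,dx$ over the simplex $\Delta^k=\{x_1+\dots+x_k\le 1\}$, evaluate this Dirichlet integral, and then sum the resulting $\prod_i H_i^{-(a_i+2)}$ over $H$ to get the product of zeta values. Your dominated-convergence justification for interchanging the $H$-sum with the $N\to\infty$ limit is the standard way to make this rigorous and is exactly what is needed.

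One remark: your computation yields the constant $\prod_i (a_i+1)!\,\zeta(a_i+2)$, whereas the lemma as stated in the paper has $\prod_i (a_i+1)\,\zeta(a_i+2)$. Your version is the correct one (and agrees with the explicit instance $\int_{\Delta^2}x_1^2 x_2\,dx_1dx_2=2!/5!$ used in the paper's proof of \eqref{eq:combi3}); the missing factorial is a typo in the paper's transcription of the lemma from \cite{AEZ2}.

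The Mellin/Tauberian alternative you sketch is also correct and is a genuinely different route, trading the elementary tail estimate for analytic machinery; it is not the approach taken in \cite{AEZ2} or in the paper.
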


We will need the following variations of the previous lemma: 

\begin{lem}\label{lem:tool2}
\begin{eqnarray}
\sum_{W(H_1+2H_2)\leq 2N }W^m & \sim & \frac{N^{m+1}}{2(m+1)} \left(2^{m+1}\zeta(m)-(2^{m+1}+1)\zeta(m+1)\right)\label{eq:combi1}
\\
\sum_{W(H_1+2H_2+H_3)\leq N}W^3 & \sim & \frac{N^4}{16}\left(\zeta(2)-4\zeta(3)+\frac{49}{16}\zeta(4)\right)\label{eq:combi2}\\
\sum_{\substack{W_1(H_1+2H_2)\\+W_2H_3\leq 2N}} W_1^2W_2 & \sim &  \frac{N^ 5}{30}(8(\zeta(2))^2 -9\zeta(2)\zeta(3))\label{eq:combi3}
\end{eqnarray}
\end{lem}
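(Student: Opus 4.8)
The plan is to reduce all three estimates to the template of the standard Lemma~3.7 of \cite{AEZ2} quoted above: sum out the \emph{width} variables $W$ first (they carry the positive powers), and then evaluate the residual sum over the \emph{multiplicity} variables $H_i$ in terms of zeta values. For \eqref{eq:combi1} and \eqref{eq:combi2} there is a single width $W$. Fixing the $H_i$ and summing over $W$ in the range $W\le 2N/(H_1+2H_2)$ (resp.\ $W\le N/(H_1+2H_2+H_3)$) via \eqref{eq:sumpower}, I would obtain
\[
\sum_{W(H_1+2H_2)\le 2N}W^m\ \sim\ \frac{(2N)^{m+1}}{m+1}\sum_{H_1,H_2\ge 1}\frac{1}{(H_1+2H_2)^{m+1}},
\]
and analogously $\sum_{W(H_1+2H_2+H_3)\le N}W^3\sim\frac{N^4}{4}\sum_{H_1,H_2,H_3\ge1}(H_1+2H_2+H_3)^{-4}$. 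Everything is thereby reduced to evaluating the Dirichlet-type multiplicity sums $\sum_{H_1,H_2}(H_1+2H_2)^{-s}$ and $\sum_{H_1,H_2,H_3}(H_1+2H_2+H_3)^{-s}$.

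These multiplicity sums are the crux, and I would evaluate them by grouping terms according to the value $n$ of the linear form and counting its representations. For $n=H_1+2H_2$ with $H_1,H_2\ge1$ the number of representations is $\lfloor(n-1)/2\rfloor=\tfrac{n-1}{2}-\tfrac12\mathds 1_{\{n\text{ even}\}}$; splitting off the even part and applying \eqref{eq:sumodd} gives
\[
\sum_{H_1,H_2\ge1}\frac{1}{(H_1+2H_2)^{s}}=\tfrac12\zeta(s-1)-\frac{2^{s}+1}{2^{s+1}}\,\zeta(s),
\]
which at once produces \eqref{eq:combi1}. For three variables the representation count of $n=H_1+2H_2+H_3$ works out to the quasi-polynomial $\tfrac{n^2}{4}-n+1-\tfrac14\mathds 1_{\{n\text{ odd}\}}$ (which conveniently vanishes for $n\le 3$), and the same even/odd splitting together with \eqref{eq:sumodd} turns $\sum(H_1+2H_2+H_3)^{-4}$ into $\tfrac14\zeta(2)-\zeta(3)+\tfrac{49}{64}\zeta(4)$; multiplying by $\tfrac{N^4}{4}$ gives \eqref{eq:combi2}.

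For \eqref{eq:combi3} there are two widths, so in place of \eqref{eq:sumpower} I would sum over $(W_1,W_2)$ using the integral approximation underlying Lemma~3.7 of \cite{AEZ2}: writing $A=H_1+2H_2$ and $B=H_3$, the substitution $u=AW_1,\ v=BW_2$ yields $\sum_{AW_1+BW_2\le 2N}W_1^2W_2\sim A^{-3}B^{-2}\int_{u+v\le 2N}u^2v\,\dd u\,\dd v=\tfrac{8N^5}{15}\,A^{-3}B^{-2}$. The residual multiplicity sum then \emph{factors} as $\big(\sum_{H_1,H_2}(H_1+2H_2)^{-3}\big)\cdot\zeta(2)$, and I would reuse the two-variable identity above at $s=3$, namely $\tfrac12\zeta(2)-\tfrac{9}{16}\zeta(3)$, to recover $\tfrac{N^5}{30}\big(8\zeta(2)^2-9\zeta(2)\zeta(3)\big)$.

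The main obstacle is not any single closed form but the bookkeeping in the multiplicity sums: correctly expressing the number of representations $n=H_1+2H_2$ and $n=H_1+2H_2+H_3$ as quasi-polynomials in $n$, arranging the even/odd splitting so that \eqref{eq:sumodd} applies cleanly, and — for full rigor — checking that in each reduction the passage from a sum to its leading term (via \eqref{eq:sumpower} for a single width, or via the Dirichlet integral for two widths) discards only genuinely lower-order contributions, exactly as in the proof of Lemma~3.7 of \cite{AEZ2}.
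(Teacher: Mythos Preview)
Your proposal is correct and follows essentially the same route as the paper: sum the width variables first via \eqref{eq:sumpower} (or the integral of Lemma~3.7 of \cite{AEZ2} for two widths), group the residual multiplicity sum by the value $n$ of the linear form, count representations, and split by parity using \eqref{eq:sumodd}. The only difference is cosmetic: the paper splits $H=2K+1$ versus $H=2K+2$ at the outset and carries two parallel sums $S_1,S_2$ (resp.\ $S_3,S_4$), whereas you package the representation count as a single quasi-polynomial and split once at the end; for \eqref{eq:combi3} you additionally notice that the residual sum factors and reuse your two-variable identity at $s=3$, which is a mild streamlining over the paper's explicit parity split.
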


\begin{proof}  Proof of (\ref{eq:combi1}):
$$A=\sum_{W(H_1+2H_2)\leq 2N}W^m=\sum_{WH\leq 2N} W^m\Card\{(H_1,H_2)\in\N^2\; \textrm{s.t.}\; H=H_1+2H_2\}$$
Since $2H_2$ is even and goes from $2$ to $H-1$ or $H-2$ depending on the parity of $H$, we have : $$\Card\{(H_1,H_2)\; \textrm{s.t.}\; H=H_1+2H_2\}=\lfloor\frac{H-1}{2}\rfloor.$$
\begin{eqnarray*}A &\sim & \sum_{WH\leq 2N}W^m\lfloor\frac{H-1}{2}\rfloor  = \sum_{W(2K+1)\leq 2N}W^mK+\sum_{W(2K+2)\leq 2N}W^mK\\
& \sim &\sum_{K\geq 1}K\left(\frac{1}{m+1}\left(\frac{2N}{2K+1}\right)^{m+1}+\frac{1}{m+1}\left(\frac{2N}{2K+2}\right)^{m+1}\right)\end{eqnarray*}
using (\ref{eq:sumpower}). So

\[A=\frac{N^{m+1}}{m+1}\left(2^{m+1}\underbrace{\sum_{K\geq 0} \frac{K}{(2K+1)^{m+1}}}_{S_1(m)}+\underbrace{\sum_{K\geq 0}\frac{K}{(K+1)^{m+1}}}_{S_2(m)}\right)\]

\[2S_1(m)+\sum_{K\geq 0}\frac{1}{(2K+1)^{m+1}}=\sum_{K\geq 0}\frac{1}{(2K+1)^m}\]
So using (\ref{eq:sumodd}) we obtain: 
\[S_1(m)=\frac{1}{2^{m+2}}((2^{m+1}-2)\zeta(m)-(2^{m+1}-1)\zeta(m+1))\]
Similarly, \[S_2(m)=\zeta(m)-\zeta(m+1),\]
which gives the result.

Proof of (\ref{eq:combi2}): $$B=\sum_{W(H_1+2H_2+H_3)\leq N}W^3=\sum_{WH\leq N}W^3\Card\{(H_1,H_2,H_3)\; \textrm{s.t.}\; H=H_1+2H_2+H_3\}$$
Since $2H_2$ is even and goes from $2$ to $H-2$ or $H-3$ depending on the parity of $H$, and $H_1$ is an integer which goes from $1$ to $H-2H_2-1$, we have: $$
\Card\{(H_1,H_2,H_3)\; \textrm{s.t.}\; H=H_1+2H_2+H_3\}=\begin{cases}K(K+1)
 & \mbox{if }  H=2K+3\; (K\geq 1)\\K^2 & \mbox{if } H=2K+2\; (K\geq 1)\end{cases}$$
 So $$B\sim \frac{N^4}{4}\left(\underbrace{\sum_{K\geq 0} \frac{K(K+1)}{(2K+3)^4}}_{S_3}+\underbrace{\sum_{K\geq 0}\frac{K^2}{(2K+2)^4}}_{S_4}\right).$$
 $$S_3=\frac{1}{4}\sum_{K\geq 0}\frac{1}{(2K+3)^2}-\sum_{K\geq 0}\frac{1}{(2K+3)^3}+\frac{3}{4}\sum_{K\geq 0}\frac{1}{(2K+3)^4}$$ so by (\ref{eq:sumodd}) we have:
 \[S_3=\frac{3}{16}\zeta(2)-\frac{7}{8}\zeta(3)+\frac{45}{64}\zeta(4)\]
Similarly $$S_4=\frac{1}{2^4}(\zeta(2)-2\zeta(3)+\zeta(4)),$$ which gives the result.

Proof of (\ref{eq:combi3}):
As for (\ref{eq:combi1}) we have: \[\sum_{\substack{W_1(H_1+2h_2)\\+W_2H_3\leq 2N}} W_1^2W_2=\sum_{\substack{W_1(2K+1)\\+W_2H_3\leq 2N}}W_1^2W_2K+\sum_{\substack{W_1(2K+2)\\+W_2H_3\leq 2N}}W_1^2W_2K\]
Following the proof of Lemma 3.7 in \cite{AEZ2}, we introduce $x_1=\cfrac{W_1(2K+1)}{2N}$ and $x_2=\cfrac{W_2H_3}{2N}$. We obtain for the first sum
\begin{eqnarray*}\sum_{\substack{W_1(2K+1)\\+W_2H_3\leq 2N}}W_1^2W_2K & \sim & \sum_{\substack{K\geq 0,\\ H\geq 1}}\int_{\Delta^2}K\left(\frac{x_12N}{2K+1}\right)^2\left(\frac{x_22N}{H}\right)\frac{2N}{2K+1}dx_1\frac{2N}{H}dx_2\\
& = & (2N)^5\int_{\Delta^2}x_1^2x_2dx_1dx_2\sum_{K,H}\frac{K}{(2K+1)^3}\frac{1}{H^2}\end{eqnarray*}
where $\Delta^2$ denote the simplexe $x_1+x_2\leq 1$ in $\R_+^2$, and \[\int_{\Delta^2}x_1^2x_2dx_1dx_2=\frac{2!}{5!}.\]
Note that \[\sum_{K\geq 0,H\geq 1}\frac{K}{(2K+1)^3}\frac{1}{H^2}=S_1(2)\zeta(2)=\cfrac{1}{16}(6(\zeta(2))^2-7\zeta(2)\zeta(3))\]
with $S_1(m)$ defined on the proof of (\ref{eq:combi1}).
Similarly, we obtain that \[\sum_{W_1(2K+2)+W_2H_3\leq 2N}W_1^2W_2K=(2N)^5\frac{2!}{5!}\frac{1}{8}((\zeta(2))^2-\zeta(2)\zeta(3)),\]
which gives the result.

\end{proof}


\bibliographystyle{amsalpha}

\end{document}